\newtheorem{thm}{Theorem}[section]
\newtheorem{lem}[thm]{Lemma}
\newtheorem{prop}[thm]{Proposition}
\newtheorem{defn}[thm]{Definition}
\theoremstyle{definition}
\newtheorem{rem}[thm]{Remark}
\theoremstyle{remark}
\DeclareMathOperator{\spt}{spt}
\DeclareMathOperator{\Tr}{Tr}
\newcommand{\R}{\mathbb{R}}
\newcommand{\N}{\mathbb{N}}
\newcommand{\Rn}{\mathbb R^n}
\newcommand{\de}{\partial}
\newcommand{\eps}{\varepsilon}
\DeclareMathOperator{\dive}{div}
\DeclareMathOperator{\esssup}{ess\,sup}
\newcommand{\ds}{\displaystyle}
\DeclareMathOperator{\Qp}{\mathcal Q_p}
\title{ON THE SECOND ANISOTROPIC CHEEGER CONSTANT AND RELATED QUESTIONS}
\author{Gianpaolo Piscitelli}
\address{Dipartimento di Scienze Economiche Giuridiche Informatiche e Motorie, Universit\'a degli studi di Napoli Parthenope, Via Guglielmo Pepe, Rione Gescal - 80035 Nola (NA), Italy.}
\email{gianpaolo.piscitelli@uniparthenope.it}
\date{}
\begin{document}

\maketitle

\begin{abstract}
In this paper we study the behavior of the second eigenfunction of the anisotropic $p$-Laplace operator
\[
-\mathcal Q_{p}u:=-\dive \left(F^{p-1}(\nabla u)F_\xi (\nabla u)\right),
\]
as $p \to 1^+$, where $F$ is a suitable smooth norm of $\R^{n}$. Moreover, for any regular set $\Omega$, we define the second anisotropic Cheeger constant as 
\begin{equation*}
h_{2,F}(\Omega):=\inf \left\{ \max\left\{\frac{P_{F}(E_{1})}{|E_{1}|},\frac{P_{F}(E_{2})}{|E_{2}|}\right\},\; E_{1},E_{2}\subset \Omega, E_{1}\cap E_{2}=\emptyset\right\},
\end{equation*}
where $P_{F}(E)$ is the anisotropic perimeter of $E$, and study the connection with the second eigenvalue of the anisotropic $p$-Laplacian. Finally, we study the twisted anisotropic $q$-Cheeger constant with a volume constraint.
\\
 
\noindent {\bf MSC 2020:} 47J10 - 49Q20 - 52A38.\\
\noindent {\bf Keywords:} Nonlinear eigenvalue problems; second anisotropic Cheeger constant; second eigenfunctions of the $p$-Laplacian.
\end{abstract}

\section{Introduction}
In the recent years, the interest on anisotropic elliptic problems raised in many contexts. In particular, they have been studied properties of the first (\cite{BFK, DPG2, DPG3,DPdBG}) and second (\cite{DPGP2}) Dirichlet eigenvalue of the anisotropic $p$-La\-pla\-cian operator
\begin{equation}
\label{operat-intro}
-\mathcal Q_{p}u:=-\dive \left(F^{p-1}(\nabla u)\nabla_{\xi}F (\nabla u)
\right),
\end{equation}
where $1<p<+\infty$, and $F$ is a sufficiently smooth norm on $\R^{n}$ (see \Cref{anintro} for the precise assumptions on $F$). Let us observe that the operator in \eqref{operat-intro} reduces to the $p$-Laplacian when $F$ is the Euclidean norm on $\R^{n}$ and, in general, $\mathcal Q_{p}$ is anisotropic and can be highly nonlinear. 

We will consider the following Dirichlet eigenvalue problem:
\begin{equation}
\label{introauti}
\left\{
\begin{array}{ll}
-\mathcal Q_{p}u=\lambda_{F}(p,\Omega) |u|^{p-2}u  &\text{in}\ \Omega \\
u=0 &\text{on } \partial\Omega
\end{array}
\right.
\end{equation}
and we study the limiting behaviour of the second eigenvalue $\lambda_{2,F}(p,\Omega)$ as $p\to 1^+$. 

Regarding the Euclidean case, the asymptotic behavior, as $p\to 1^+$, of the solution of $\Delta_p u=f$ has been studied in \cite{CT} for very general $f$. Moreover, we mention that the asymptotic behavior of the first eigenpair of the $p$-Laplacian,  for $p\to 1^+$, has been studied in \cite{MRST} and, for $p\to +\infty$, in \cite{JLM} (in presence of Dirichlet boundary conditions) and in \cite{EKNT} (in presence of Neumann boundary conditions). 

On the other hand, regarding the anisotropic case, the asympthotic behavior of the first Dirichlet eigenvalue $\lambda_{1,F}(p,\Omega)$ of $-\mathcal Q_{p}$ has been studied in \cite{KN}, for $p\to 1^+$, and in \cite{BKJ}, for $p\to+\infty$. For sake of completeness, we remark that the anisotropic $p$-Laplacian eigenvalue problem has been studied in \cite{DPGP1} when Neumann boundary conditions hold (the limiting behavior as $p\to+\infty$ has been studied in \cite{Pi2}) and in \cite{DPG2} when Robin boundary conditions hold (the limiting behavior as $p\to 1^+$ has been studied in \cite{BDPP} by the use of $\Gamma$-convergence results).

The smallest eigenvalue $\lambda_{1,F}(p,\Omega)$ of \eqref{introauti} is variationally characterized as
\begin{equation}
\label{Raylegh}
\lambda_{1,F}(p,\Omega)=\min_{u\in W_0^{1,p}(\Omega)\backslash\{0\} }\frac{\ds\int_\Omega F^p(\nabla u) dx}{\ds\int_\Omega | u|^p dx}
\end{equation}
and the first eigenfunction of \eqref{introauti} is a minimizer of \eqref{Raylegh}.
We show the following Cheeger inequality:
\begin{equation}
\label{Chetype}
\lambda_{1,F}(p,\Omega)\geq\left(\frac{h_{1,F}(\Omega)}{p}\right)^p
\end{equation}
where $h_{1,F}(\Omega)$ is the anisotropic Cheeger constant, defined as
\begin{equation}\label{c1_intro}
h_{1,F}(\Omega):=\inf_{\substack{
{E\subseteq\Omega}
}}\left\{\frac{P_F(E)}{|E|}\right\},
\end{equation}
with $P_F$ denoting the anisotropic perimeter in $\R^n$. 
Let us remark that the first eigenvalue of the anisotropic $1$-Laplacian is defined by
 \begin{equation}
 \label{1Lapanis}
 \lambda_{1,F}(1,\Omega)=\min_{0\neq u\in BV(\Omega)}\frac{|Du|_F(\Omega)+\int_{\partial\Omega}|u|F(\boldsymbol\nu_\Omega)d\mathcal H^{n-1}}{\int_\Omega |u|dx},
 \end{equation}
 where $BV(\Omega)$ denotes the spaces of bounded variation in $\Omega$ and $|Du|_F(\Omega)$ the total variation of $u$ with respect to $F$ and $\boldsymbol\nu_\Omega$ the outer normal to $\partial\Omega$. Problems \eqref{1Lapanis} and \eqref{c1_intro} are equivalent in the sense that a function $u\in BV(\Omega)$ is a minimum of \eqref{1Lapanis} if and only if almost every level set is a Cheeger set;  therefore it holds that $\lambda_{1,F}(1,\Omega)=h_{1,F}(\Omega)$. To study in deep this argument, see \cite{CCMN} and reference therein); refer to \cite{DPNOT,DPOS,FPSS,KF,KS,SS} for the Euclidean case (that is when $F=\mathcal E$) and to \cite{CSZ, CEP} for the discrete case.

In this paper, we show that the first eigenvalue $\lambda_{1,F}(p,\Omega)$ tends to the Cheeger constant:
\begin{equation}
\label{firsttends}
\lim_{p\to 1^+}\lambda_{1,F}(p,\Omega)^{\frac1p}=h_{1,F}(\Omega).
\end{equation}
Let us highlight that both the Cheeger inequality \eqref{Chetype} and the limiting behaviour \eqref{firsttends} have been proven in \cite{BFK}. We overcome some regularity assumptions (see \cite{Li,S}) by using an interior approximation for $BV$ functions by smooth functions with compact support \cite[Theorem 4.3]{BDPP}. 
Thank to this result, we are able to prove that the Cheeger set $C$ can be approximated by a sequence of sets $C_k$ of finite perimeter, compactly contained in $C$ and with smooth boundary.

 The main aims of this paper are the proofs of the analogous of \eqref{Chetype} and \eqref{firsttends} for the  second anisotropic eigenvalue $\lambda_{2,F}(p,\Omega)$ of \eqref{introauti}, as announced in \cite{DPGP2}. More precisely, we prove the following anisotropic Cheeger inequality
\begin{equation}
\label{ci_2_ineq}
\lambda_{2,F}(p,\Omega)\geq\left(\frac{h_{2,F}(\Omega)}{p}\right)^p,
\end{equation}
where $h_{2,F}(\Omega)$ denotes the second anisotropic Cheeger constant
\begin{equation*}
h_{2,F}(\Omega):=\inf \left\{ \max\left\{\frac{P_{F}(E_{1})}{|E_{1}|},\frac{P_{F}(E_{2})}{|E_{2}|}\right\},\;|E_{1}|,|E_{2}|>0,\; E_{1},E_{2}\subset \Omega, E_{1}\cap E_{2}=\emptyset\right\},
\end{equation*}
and the following limiting relationship 
\begin{equation}\label{2limiting}
\lim_{p\to 1^+}\lambda_{2,F}(p,\Omega)^{\frac1p}=h_{2,F}(\Omega).
\end{equation}
Let us remark that, for the Euclidean case, both the Cheeger inequality \eqref{ci_2_ineq} and the limit property \eqref{2limiting} have been proven in \cite{Pa1,BoP}.

The asymptotic result can be easily generalized to the $k$-th variational eigenvalue obtaining
\begin{equation}\label{3limiting}
\limsup_{p\to 1^+}\lambda_{k,F}(p,\Omega)^\frac 1p\leq h_{k,F}(\Omega)\quad\text{for}\quad k\geq 3,
\end{equation}
where $h_{k,F}(\Omega)$ is the $k$-th Cheeger constant. 
The deep reason of the discrepancy between \eqref{2limiting} and \eqref{3limiting} rely on the fact that the $k-$th eigenfunction usually does not have $k-$th nodal domains. 
 
Finally, we observe that the second anisotropic Cheeger constant is related to the anisotropic ``twisted'' $q$-Cheeger constant, $1\le q<\frac{n}{n-1}$:
\begin{equation*}
\mathcal{K}_{q,F}(\Omega):=\inf\left\{\frac{ |Du|_F(\R^n)}{\ds\left(\int_\Omega|u|^{q} dx\right)^{\frac 1q}};\,u\in BV_0(\Omega), \ u\not\equiv 0, \ \int_\Omega u \ dx =0\right\},
\end{equation*}
where $BV_{0}(\Omega)$ contains the $BV$ functions in $\R^{n}$ which vanish outside $\Omega$. Moreover, it holds that
\[
h_{1,F}(\Omega)\le\mathcal{K}_{1,F}(\Omega)\le h_{2,F}(\Omega).
\]
In the Euclidean case, the minimization problem of $\mathcal K_{1,F}(\Omega)$ under volume constraint has been studied in \cite{BDNT}. 

The paper is organized as follows. In \Cref{anintro}, we recall and fix some definitions and results of the Finsler geometry; in \Cref{known_facts_sec}, we study the eigenvalue and the Cheeger problems; in \Cref{asymptotic_sec}, we give the two main results; in \Cref{twisted_sec}, we study the anisotropic twisted Cheeger constant.

\section{Notations and preliminaries}\label{anintro}
In this section, we fix the notations of the paper and recall some useful results of the Finsler geometry.
\subsection{The norm}
Throughout the paper we will consider a $C^2(\R^n\backslash\{0\})$ convex even $1-$homogeneous function 
\[
\xi\in \R^{n}\mapsto F(\xi)\in [0,+\infty[,
\] 
that is a convex function such that
\begin{equation}
\label{eq:omo}
F(t\xi)=|t|F(\xi), \quad t\in \R,\,\xi \in \R^{n}, 
\end{equation}
 and
\begin{equation}
\label{eq:lin}
a|\xi| \le F(\xi),\quad \xi \in \R^{n},
\end{equation}
for some constant $a>0$. 
Under this hypothesis, it is easy to see that there also exists $b\ge a$ such that
\begin{equation}
\label{finbi}
F(\xi)\le b |\xi|,\quad \xi \in \R^{n}.
\end{equation}
Moreover, throughout this paper, we will assume that 
\begin{equation}
\label{strong}
\nabla^{2}_{\xi}[F^{p}](\xi)\text{ is positive definite in }\R^{n}\setminus\{0\},
\end{equation}
where $1<p<+\infty$. 

The hypothesis \eqref{strong} on $F$ assures that the operator 
\[
\Qp u:= \dive \left(\frac{1}{p}\nabla_{\xi}[F^{p}](\nabla u)\right)
\] 
is elliptic, hence there exists a positive constant $\gamma$ such that
\begin{equation*}
\frac1p\sum_{i,j=1}^{n}{\nabla^{2}_{\xi_{i}\xi_{j}}[F^{p}](\eta)
  \xi_i\xi_j}\ge
\gamma |\eta|^{p-2} |\xi|^2, 
\end{equation*}
for some positive constant $\gamma$, for any $\eta \in
\Rn\setminus\{0\}$ and for any $\xi\in \Rn$. 

The polar function $F^o\colon\R^n \rightarrow [0,+\infty[$ 
of $F$ is defined as
\begin{equation*}
F^o(v)=\sup_{\xi \ne 0} \frac{\langle \xi, v\rangle}{F(\xi)}. 
\end{equation*}
 It is easy to verify that also $F^o$ is a convex function
which satisfies properties \eqref{eq:omo} and
\eqref{eq:lin}. Furthermore, we have
\begin{equation*}
F(v)=\sup_{\xi \ne 0} \frac{\langle \xi, v\rangle}{F^o(\xi)}.
\end{equation*}
From the above property it holds that
\begin{equation*}
|\langle \xi, \eta\rangle| \le F(\xi) F^{o}(\eta), \qquad \forall \xi, \eta \in \R^{n}.
\end{equation*}
Furthermore, the following properties of $F$ and $F^o$ hold true (see for example \cite{AB,AFLT,BP}):
\begin{gather}
 \nabla_{\xi}F(\xi) \cdot \xi = F(\xi), \quad  \nabla_{\xi}F^{o} (\xi) \cdot \xi = F^{o}(\xi)\qquad\quad\ \ \forall\xi\in\R^n\setminus \{0\}, \label{eq:om}\\
 F( \nabla_{\xi}F^o(\xi))=F^o( \nabla_{\xi}F(\xi))=1\qquad\qquad\qquad\qquad  \forall \xi \in\R^n\setminus \{0\}, \label{eq:H1} \\
F^o(\xi)  \nabla_{\xi}F(\nabla_{\xi}F^o(\xi) ) = F(\xi) \nabla_{\xi}F^o( \nabla_{\xi}F(\xi) ) = \xi\quad \forall \xi \in\R^n\setminus \{0\}, \label{eq:HH0}\\
\label{eq:sec}\sum_{j=1}^n\nabla^2_{\xi_i\xi_j}F(\xi)\xi_i\xi_j=0 \quad \forall\ i=1,...,n \qquad\qquad\qquad \forall \xi \in\R^n\setminus \{0\}.
\end{gather}

The anisotropic distance function to $\partial \Omega$ is
\[d_F(x)=\inf_{y\in\partial \Omega}F^o(x-y)\quad x\in\overline\Omega,\]
and solves the anisotropic iconal equation $F(\nabla d_F(x))=1$.
The set
\[
\mathcal W = \{  \xi \in \R^n \colon F^o(\xi)< 1 \}
\]
is the so-called Wulff shape centered at the origin. We put
$\kappa_n=|\mathcal W|$, where $|\mathcal W|$ denotes the Lebesgue measure
of $\mathcal W$. More generally, we denote with $\mathcal W_r(x_0)$
the set $r\mathcal W+x_0$, that is the Wulff shape centered at $x_0$
with measure $\kappa_nr^n$, and $\mathcal W_r(0)=\mathcal W_r$.

\subsection{The Hausdorff measure and the perimeter} 
In this section we recall some useful notion related to the perimeter in the Finsler metric (for more details we refer to \cite{AB,BP,M}). 

For any measurable subset $E$ of $\Omega$, the ipersurface element induced by $F$ is
\begin{equation}
\label{FHausdorff}
d\mathcal H^{n-1}_F(E):=F(\boldsymbol\nu_E) \ d\mathcal{H}^{n-1}(E),
\end{equation}
where $\boldsymbol\nu_E$ is the Euclidean outer normal to $E$ and $d\mathcal {H}^{n-1}$ is the $n-1$-dimensional Hausdorff measure. Moreover, we will denote $\boldsymbol n_E:=\nabla F(\boldsymbol\nu_E)$ the anisotropic normal to $\de E$.

We observe that the Carath\'eodory's construction \cite[Chap. 2.10]{F} and the definition of the Hausdorff measure with respect to the Finsler metric $F$ \eqref{FHausdorff}, lead to the definition
\begin{equation*}
\mathcal H^{n-1}_{F,\delta}(E):=\inf \left\{\ds\sum_{j\in J}
\kappa_{n-1}
r_j^{n-1}
\ : \ 
r_j<\delta,\ E\subset \bigcup_{j\in J} \mathcal W_{r_j}(x_j) \right\},
\end{equation*}
where $J
$ is a finite or countable set of indexes
. Therefore, it holds
\begin{equation*}
\mathcal H^{n-1}_F (E)=\lim_{\delta\to 0^+} \mathcal H^{n-1}_{F,\delta} (E)=\sup_{\delta > 0} 
\mathcal H^{n-1}_{F,\delta}(E)
.
\end{equation*}

Let $u\in BV(\Omega)$, the \emph{total variation of $u$ with respect to $F$} is
\[
|D u|_F (\Omega)
=\sup\left\{ \int_\Omega u \dive\sigma\ dx \ : \ \sigma\in C^1_0(\Omega ; \R^n), F^o(\sigma)\leq 1 \right\}
\]
and the \emph{perimeter of a set $E$ with respect to $F$} is:
\[
P_F(E;\Omega)=|D \chi_E|_F (\Omega)
=\sup\left\{ \int_E \dive\sigma\ dx \ : \ \sigma\in C^1_0(\Omega ; \R^n), F^o(\sigma)\leq 1 \right\}.
\]
Moreover, if $E$ has Lipschitz boundary, then it holds
\[
P_F(E;\Omega)=\int_{\Omega\cap\partial E} F(\boldsymbol\nu_E) \ d\mathcal{H}^{n-1}.
\]

Throughout this paper ,we denote $P_F(E)$ instead of $P_F(E;\R^{n})$ for any set $E\subset\Omega$ with finite perimeter.
Let us remark that, for all subsets $E$ of $\R^n$, $P_F(E;\Omega)$ is finite if and only if the usual perimeter $P(E;\Omega)$ is finite; indeed, by \eqref{eq:lin} and \eqref{finbi}, we have 
\[
a P(E;\Omega)\leq P_F(E;\Omega)\le b  P(E;\Omega).
\]

Moreover, a coarea formula holds (\cite{BBF,CFM,ET,FV}):
\begin{equation}\label{Fcoarea}
|D u|_F (\Omega)
=\int_{\R}P_F(\{u>s\};\Omega)ds
\end{equation}and, for all $u\in W^{1,1}(\Omega)$, we have 
\begin{equation}\label{bvrel}
|D u|_F (\Omega)
=\int_\Omega F(\nabla u)\ dx.
\end{equation} 
By using \eqref{Fcoarea} and \eqref{bvrel} with $u=d_F$, we have
\begin{equation}\label{acv}
|\Omega |=  \int_\Omega \ dx =\int_\Omega F(\nabla d_F(x))\ dx =  |Dd_F|_F(\Omega)=\int_{\R}P_F(\{d_F>s\};\Omega)\ ds.
\end{equation}

Furthermore, by using the divergence theorem and \eqref{eq:om}-\eqref{eq:H1}, we have
\[
\begin{split}
P_F(\mathcal W)&=\int_{\partial \mathcal W_r}F(\nu)d\mathcal H^{n-1}(x)=\int_{\partial \mathcal W_r}F\left(\frac{\nabla F^o(x)}{|\nabla F^o(x)|}\right)d\mathcal H^{n-1}(x)\\
&=\int_{\partial \mathcal W_r}\frac{1}{|\nabla F^o(x)|}d\mathcal H^{n-1}(x)=\int_{\partial \mathcal W_r}\frac{F(x)}{|\nabla F^o(x)|}d\mathcal H^{n-1}(x)\\
&=\frac 1r\int_{\partial \mathcal W_r}\frac{x\cdot \nabla F(x)}{|\nabla F^o(x)|}d\mathcal H^{n-1}(x)=\frac 1r\int_{\partial \mathcal W_r}x\cdot \nu d\mathcal H^{n-1}(x)=\frac nr\int_{\mathcal W_r}dx=nk_nr^{n-1}.
\end{split}
\]

The anisotropic isoperimetric inequality \cite{B, AFLT} states that among set $E$ of finite perimeter in $\R^n$ and of fixed volume the Wulff shape minimizes the anisotropic perimeter, that is:
\begin{equation}
\label{isoine}
P_F(E;\R^n)\geq n \kappa_n^\frac 1n |E|^{1-1/n}.
\end{equation}
The equality in \eqref{isoine} holds if and only if $E=\mathcal W_{R}$.

We recall from \cite[Theorem 4.3]{BDPP} a result of approximation of $\Omega$ strictly from within, that is a key result for our aims.
\begin{prop}
Let $\Omega\subset\R^N$ be an open bounded set with Lipschitz boundary  and let $u\in BV(\Omega)\cap L^p(\Omega)$ for some $p\in [1,\infty)$. Then there exists a sequence $\{u_k\}_{k\in\mathbb N}\subseteq C_0^\infty(\Omega)$ such that, for any $q\in [1,p]$,
\[
u_k\to u\ \ \text{in}\ \ L^q(\Omega)\quad\text{and}\quad |D u_k|_F(\mathbb R^N)\to |D u|_F(\mathbb R^N).
\]
\end{prop}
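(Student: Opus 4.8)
\medskip

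\noindent\textbf{Overall approach.} Extend $u$ by $0$ outside $\Omega$, so that $|Du|_F(\mathbb R^N)=|Du|_F(\Omega)+\int_{\partial\Omega}|u|\,F(\boldsymbol\nu_\Omega)\,d\mathcal H^{N-1}$, while any $u_k\in C_0^\infty(\Omega)$, extended by $0$, satisfies $|Du_k|_F(\mathbb R^N)=\int_\Omega F(\nabla u_k)\,dx$ by \eqref{bvrel}. The plan is to build $u_k$ from $u$ by two operations — a small \emph{translation towards the interior} of $\Omega$, which pushes the support strictly inside, and a \emph{mollification}, which regularizes — the first of which is localized via a partition of unity subordinate to a finite atlas of Lipschitz boundary charts. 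The anisotropy never really intervenes: every estimate below rests only on the convexity and $1$-homogeneity of $F$ and on the dual formula for $|D\cdot|_F$ recalled above, which in particular gives lower semicontinuity of $v\mapsto|Dv|_F(\mathbb R^N)$ along $L^1_{\mathrm{loc}}$-convergent sequences.

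\medskip

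\noindent\textbf{Two auxiliary facts.} First I would record: \emph{(i)} for a standard mollifier $\rho_\varepsilon$ and $v\in BV(\mathbb R^N)$, writing $Dv=\boldsymbol\nu_v\,|Dv|$, Jensen's inequality and the $1$-homogeneity of $F$ give the pointwise bound $F\big((Dv*\rho_\varepsilon)(x)\big)\le\big(F(\boldsymbol\nu_v)\,|Dv|\big)*\rho_\varepsilon(x)$, hence $|D(v*\rho_\varepsilon)|_F(\mathbb R^N)\le|Dv|_F(\mathbb R^N)$, and the same estimate holds with an arbitrary finite vector measure in place of $Dv$; since moreover $v*\rho_\varepsilon\to v$ in $L^q$ for $q\in[1,p]$, lower semicontinuity upgrades this to $|D(v*\rho_\varepsilon)|_F(\mathbb R^N)\to|Dv|_F(\mathbb R^N)$. \emph{(ii)} Translation by a fixed vector preserves $|D\cdot|_F(\mathbb R^N)$ and is continuous on $L^q$. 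I would also use the elementary identity $|\varphi\,Du|_F=\varphi\,|Du|_F$ as measures, valid for $0\le\varphi\le1$.

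\medskip

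\noindent\textbf{The approximating sequence.} By compactness of $\partial\Omega$ and the Lipschitz hypothesis, cover $\partial\Omega$ by open sets $U_1,\dots,U_m$ in each of which, after a rigid motion, $\Omega$ agrees with the region strictly above the graph of a Lipschitz function in some direction $e^{(i)}$; add $U_0\Subset\Omega$ so that $U_0,\dots,U_m$ cover $\overline{\Omega}$, and choose $\varphi_i\in C_0^\infty(U_i)$ with $\varphi_i\ge0$ and $\sum_{i=0}^m\varphi_i\equiv1$ on a neighbourhood of $\overline{\Omega}$; then $\sum_i u\,\nabla\varphi_i=0$ a.e. For $\tau>0$ small set $v_{i,\tau}:=(\varphi_i u)(\,\cdot\,+\tau e^{(i)})$ for $i\ge1$ and $v_{0,\tau}:=\varphi_0u$: since each chart is a Lipschitz graph, $\spt v_{i,\tau}$ is, for $\tau$ small, a compact subset of $\Omega$ whose distance to $\partial\Omega$ is comparable to $\tau$. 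Now pick $\varepsilon(\tau)\to0$ small enough that $u_\tau:=\sum_{i=0}^m v_{i,\tau}*\rho_{\varepsilon(\tau)}\in C_0^\infty(\Omega)$, and put $u_k:=u_{\tau_k}$ along a sequence $\tau_k\downarrow0$.

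\medskip

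\noindent\textbf{Passage to the limit, and the main difficulty.} That $u_k\to u$ in $L^q(\Omega)$ for every $q\in[1,p]$ follows from \emph{(i)}, \emph{(ii)}, $\sum_i\varphi_i u=u$ and the finiteness of the cover. For the variations, decompose $Du_k=\sum_i\big[(\varphi_i Du)_{\tau_k}+(u\,\nabla\varphi_i\,dx)_{\tau_k}\big]*\rho_{\varepsilon(\tau_k)}$ (subscript $\tau_k$ denoting translation by $\tau_k e^{(i)}$). Each $(u\nabla\varphi_i\,dx)_{\tau_k}*\rho_{\varepsilon(\tau_k)}$ differs in $L^1$ from the untranslated mollification $(u\nabla\varphi_i\,dx)*\rho_{\varepsilon(\tau_k)}$ by at most $\|(u\nabla\varphi_i)(\,\cdot\,+\tau_k e^{(i)})-u\nabla\varphi_i\|_{L^1}\to0$, and $\sum_i(u\nabla\varphi_i\,dx)*\rho_{\varepsilon(\tau_k)}=0$, so the second family contributes $o(1)$ to $|Du_k|_F(\mathbb R^N)$. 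For the first family, subadditivity of $|\cdot|_F$ together with \emph{(i)}, \emph{(ii)} and $|\varphi_iDu|_F=\varphi_i|Du|_F$ gives
\[
|Du_k|_F(\mathbb R^N)\le\sum_{i=0}^m\big|(\varphi_iDu)_{\tau_k}*\rho_{\varepsilon(\tau_k)}\big|_F(\mathbb R^N)+o(1)\le\sum_{i=0}^m\int\varphi_i\,d|Du|_F+o(1)=|Du|_F(\mathbb R^N)+o(1),
\]
the last equality because $\sum_i\varphi_i\equiv1$ on $\spt|Du|_F\subset\overline{\Omega}$. Hence $\limsup_k|Du_k|_F(\mathbb R^N)\le|Du|_F(\mathbb R^N)$; combined with $\liminf_k|Du_k|_F(\mathbb R^N)\ge|Du|_F(\mathbb R^N)$, which is immediate from the $L^1$-convergence $u_k\to u$ and lower semicontinuity, this proves $|Du_k|_F(\mathbb R^N)\to|Du|_F(\mathbb R^N)$. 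I expect the delicate point to be exactly this reverse (limsup) inequality: it is where the Lipschitz-graph structure of $\partial\Omega$ is essential, so that the inward translations genuinely move mass into $\Omega$ while, in the limit, reproducing the full variation of $\varphi_i u$ — equivalently, so that the boundary contribution $\int_{\partial\Omega}|u|\,F(\boldsymbol\nu_\Omega)\,d\mathcal H^{N-1}$ to $|Du|_F(\mathbb R^N)$ is recovered, and not overshot, by the interior gradients of the $u_k$; the cancellation $\sum_i u\,\nabla\varphi_i=0$ is what prevents the partition of unity from generating spurious variation.
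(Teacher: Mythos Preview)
The paper does not prove this proposition; it is merely recalled from \cite{BDPP} (Theorem~4.3 there), so there is no in-paper argument to compare against. Your proof is the standard strict-interior approximation scheme of Schmidt \cite{S}, correctly adapted to the anisotropic total variation: the two ingredients you isolate --- Jensen's inequality for the convex $1$-homogeneous $F$ under mollification, giving $|D(v*\rho_\varepsilon)|_F(\R^N)\le|Dv|_F(\R^N)$, and the cancellation $\sum_i u\nabla\varphi_i=0$ that kills the spurious variation generated by the partition of unity --- are exactly what is needed, and your limsup/liminf sandwich is sound.

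One cosmetic slip: with your convention that $\Omega$ lies \emph{above} the graph in direction $e^{(i)}$, the function $(\varphi_i u)(\cdot+\tau e^{(i)})$ has support $\spt(\varphi_i u)-\tau e^{(i)}$, which moves \emph{toward} $\partial\Omega$ rather than away from it; you want $v_{i,\tau}:=(\varphi_i u)(\cdot-\tau e^{(i)})$. This does not affect the substance of the argument.
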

In the sequel, the previous result will be useful when particularized to characteristic functions of sets of finite perimeter.
\begin{prop}
\label{approssimazione_thm_per}
Let $\Omega\subset\R^N$ be an open bounded set with Lipschitz boundary with finite perimeter. Then there exists a $C_0^\infty$ sequence $\{E_k\}_{k\in\mathbb N}\subseteq \Omega$ such that
\[
P_F(E_k)\to P_F(\Omega).
\]
\end{prop}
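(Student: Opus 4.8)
The plan is to apply the preceding proposition to the constant function $u\equiv 1\in BV(\Omega)\cap L^1(\Omega)$ and then to select a suitable superlevel set of the resulting smooth approximants by means of the coarea formula.

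First I would note that, since $\Omega$ has Lipschitz boundary, $|D\chi_\Omega|_F(\R^N)=P_F(\Omega;\R^N)=P_F(\Omega)$, so that applying the preceding proposition with $u\equiv1$ and $q=p=1$ produces a sequence $\{u_k\}\subseteq C_0^\infty(\Omega)$ with $u_k\to1$ in $L^1(\Omega)$ and $|Du_k|_F(\R^N)\to P_F(\Omega)$. Because $\spt u_k$ is a compact subset of $\Omega$, formula \eqref{bvrel} gives $a_k:=\int_\Omega F(\nabla u_k)\,dx=|Du_k|_F(\Omega)=|Du_k|_F(\R^N)\to P_F(\Omega)$, and for every $t>0$ the set $\{u_k>t\}$ is compactly contained in $\Omega$, hence $P_F(\{u_k>t\};\Omega)=P_F(\{u_k>t\})$.

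Next I would combine two one-dimensional estimates. The coarea formula \eqref{Fcoarea} yields $\int_0^1 P_F(\{u_k>t\})\,dt\le|Du_k|_F(\Omega)=a_k$, while the layer-cake formula gives $\int_0^1 |\Omega\setminus\{u_k>t\}|\,dt\le\|u_k-1\|_{L^1(\Omega)}=:b_k\to0$. By Chebyshev's inequality the set of $t\in(0,1)$ for which $|\Omega\setminus\{u_k>t\}|>\sqrt{b_k}$ has measure at most $\sqrt{b_k}$; averaging the perimeter over the complementary set (of measure at least $1-\sqrt{b_k}$) and discarding, via Sard's theorem, the null set of critical values of $u_k$, I can choose a regular value $t_k\in(0,1)$ with $|\Omega\setminus\{u_k>t_k\}|\le\sqrt{b_k}$ and $P_F(\{u_k>t_k\})\le a_k/(1-\sqrt{b_k})$. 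Set $E_k:=\{u_k>t_k\}$: this is an open set with $C^\infty$ boundary $\{u_k=t_k\}$ and $\overline{E_k}\Subset\Omega$, i.e. a $C_0^\infty$ subset of $\Omega$.

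Finally I would pass to the limit. Since $E_k\subseteq\Omega$ and $|\Omega\setminus E_k|\le\sqrt{b_k}\to0$, we have $\chi_{E_k}\to\chi_\Omega$ in $L^1(\R^N)$, so the lower semicontinuity of $P_F(\cdot)=P_F(\cdot;\R^N)$ with respect to $L^1$-convergence (it is a supremum of the $L^1$-continuous maps $E\mapsto\int_E\dive\sigma\,dx$) gives $\liminf_k P_F(E_k)\ge P_F(\Omega)$; on the other hand $\limsup_k P_F(E_k)\le\lim_k a_k/(1-\sqrt{b_k})=P_F(\Omega)$. Hence $P_F(E_k)\to P_F(\Omega)$. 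The delicate point is the selection step: the superlevel set $E_k$ must simultaneously have a small volume deficit — which is what forces $E_k\to\Omega$ in $L^1$ and hence, through lower semicontinuity, supplies the lower bound for the perimeter — and a perimeter not exceeding $a_k/(1-\sqrt{b_k})$, all while $t_k$ is a regular value of $u_k$; the remaining ingredients (the approximation result, the coarea formula, lower semicontinuity) are applied in a standard way.
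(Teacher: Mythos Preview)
Your proof is correct and is precisely the argument the paper has in mind: the paper does not give a separate proof of this proposition but simply presents it as the preceding approximation result ``particularized to characteristic functions of sets of finite perimeter'', and your argument (take $u\equiv 1$, apply the coarea formula to the smooth approximants, select a regular superlevel set with small volume deficit and controlled perimeter, then conclude by lower semicontinuity) is exactly how one makes that particularization rigorous.
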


\section{The eigenvalue and the Cheeger problem}
\label{known_facts_sec}
 In this Section, we firstly recall some results on the first anisotropic $p$-Laplacian eigenfunction(s) and eigenvalue, on the first and second anisotropic Cheeger set(s) and constant; then we discuss the relationships among them, as $p\to 1^+$.
 
\subsection{The Dirichlet anisotropic $p$-Laplace eigenvalue problem}
We first recall the definition of eigenvalue of the Dirichlet anisotropic $p$-Laplace operator \eqref{operat-intro}.
\begin{defn}
We say that $u\in W_{0}^{1,p}(\Omega)$, $u\not\equiv 0$, is an eigenfunction of \eqref{introauti}, if 
\[
\int_\Omega F^{p-1}(\nabla u) \nabla_\xi F(\nabla u)\cdot\nabla \varphi \ dx=\lambda_F(p,\Omega)\int_\Omega |u|^{p-2}u\varphi\ dx
\]
for all $\varphi\in W^{1,p}_0(\Omega)$. The corresponding real number $\lambda_F(p,\Omega)$ is called an eigenvalue of \eqref{introauti}.
\end{defn}

The following Faber-Krahn type inequality has been proved in \cite[Theorem 3.3]{BFK}. 
\begin{prop}
Let $\Omega$ be a bounded connected open set. The first eigenvalue of the Dirichlet problem \eqref{introauti} is simple and the first eigenfunction is positive. Moreover
\begin{equation}
\label{FabKra}
|\Omega|^{p/n} \lambda_{1,F}(p,\Omega)\geq\kappa_n^{p/n}\lambda_{1,F}(p,\mathcal W_R),\quad\text{with}\ |\mathcal W_R|=|\Omega|
\end{equation}
and the equality sign in \eqref{FabKra} holds if $\Omega$ is homothetic to the Wulff shape.
\end{prop}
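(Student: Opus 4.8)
The plan is to establish the three assertions—positivity of the first eigenfunction, simplicity of $\lambda_{1,F}(p,\Omega)$, and the anisotropic Faber--Krahn inequality \eqref{FabKra}—separately, all resting on the variational characterization \eqref{Raylegh}. For existence and positivity I would run the direct method: the Rayleigh functional is coercive on $W_0^{1,p}(\Omega)$ because \eqref{eq:lin} gives $F^p(\nabla u)\ge a^p|\nabla u|^p$, which combined with the Poincaré inequality controls the full norm, and it is weakly lower semicontinuous since $\xi\mapsto F^p(\xi)$ is convex; hence a minimizer $u$ exists. As $F$ is even, $F(\nabla|u|)=F(\nabla u)$ a.e., so $|u|$ is a minimizer too and we may take $u\ge 0$. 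The ellipticity coming from \eqref{strong} then permits the anisotropic Harnack inequality / strong maximum principle, so that a nontrivial nonnegative eigenfunction is strictly positive throughout the connected set $\Omega$.

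For simplicity I would use the hidden convexity of Belloni--Kawohl. Take two positive first eigenfunctions $u,v$ normalized by $\|u\|_{L^p(\Omega)}=\|v\|_{L^p(\Omega)}=1$ and set $\sigma_t=\big((1-t)u^p+tv^p\big)^{1/p}$ for $t\in[0,1]$, so that $\sigma_t^p=(1-t)u^p+tv^p$ and thus $\|\sigma_t\|_{L^p(\Omega)}=1$. The convexity of $\xi\mapsto F^p(\xi)$ gives $\int_\Omega F^p(\nabla\sigma_t)\,dx\le(1-t)\int_\Omega F^p(\nabla u)\,dx+t\int_\Omega F^p(\nabla v)\,dx=\lambda_{1,F}(p,\Omega)$, while admissibility of $\sigma_t$ in \eqref{Raylegh} forces the reverse bound $\int_\Omega F^p(\nabla\sigma_t)\,dx\ge\lambda_{1,F}(p,\Omega)$. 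Equality holds for every $t$, and the strict convexity furnished by \eqref{strong} then forces $\nabla\log u=\nabla\log v$ pointwise, whence $u=v$ after normalization. Equivalently, an anisotropic Picone inequality tested with $v^p/u^{p-1}$ and $u^p/v^{p-1}$ yields the same conclusion.

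The core of the Faber--Krahn estimate is a convex (Wulff) symmetrization argument. Let $u>0$ be the first eigenfunction on $\Omega$ and let $u^\star$ be its convex rearrangement on $\mathcal W_R$ with $|\mathcal W_R|=|\Omega|$, i.e.\ the equimeasurable function whose superlevel sets are the Wulff shapes of the same measure, decreasing with respect to $F^o$. Equimeasurability gives $\int_{\mathcal W_R}|u^\star|^p\,dx=\int_\Omega|u|^p\,dx$, while the anisotropic Pólya--Szegő inequality yields $\int_{\mathcal W_R}F^p(\nabla u^\star)\,dx\le\int_\Omega F^p(\nabla u)\,dx$. The latter is obtained by writing both gradient integrals via the anisotropic coarea formula \eqref{Fcoarea} as integrals over the levels of the anisotropic perimeters $P_F(\{u>s\})$ and applying the anisotropic isoperimetric inequality \eqref{isoine} level by level, exactly as in the convex symmetrization of \cite{AFLT}. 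Combining the two displays,
\[
\lambda_{1,F}(p,\Omega)=\frac{\ds\int_\Omega F^p(\nabla u)\,dx}{\ds\int_\Omega|u|^p\,dx}\ge\frac{\ds\int_{\mathcal W_R}F^p(\nabla u^\star)\,dx}{\ds\int_{\mathcal W_R}|u^\star|^p\,dx}\ge\lambda_{1,F}(p,\mathcal W_R),
\]
the last inequality because $u^\star\in W_0^{1,p}(\mathcal W_R)$ is admissible in \eqref{Raylegh} on $\mathcal W_R$.

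Finally, the displayed form \eqref{FabKra} is just the scale-invariant rewriting of $\lambda_{1,F}(p,\Omega)\ge\lambda_{1,F}(p,\mathcal W_R)$: multiply by the common factor $|\Omega|^{p/n}=|\mathcal W_R|^{p/n}$ and use the scaling relation $\lambda_{1,F}(p,\mathcal W_R)=R^{-p}\lambda_{1,F}(p,\mathcal W)$, which follows since the Rayleigh quotient scales by $t^{-p}$ under $\Omega\mapsto t\Omega$. The equality claim when $\Omega$ is homothetic to $\mathcal W$ is then immediate from the translation and scaling invariance of the quotient. I expect the main obstacle to be the anisotropic Pólya--Szegő step, since the rearrangement must be taken with respect to Wulff shapes rather than Euclidean balls and the anisotropic perimeter has to be tracked carefully through the coarea formula; the coercivity, the hidden-convexity computation, and the scaling are otherwise routine.
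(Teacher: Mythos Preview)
The paper does not actually prove this proposition: it is stated with the attribution ``has been proved in \cite[Theorem 3.3]{BFK}'' and no argument is given. Your sketch is correct and is essentially the proof one finds in \cite{BFK}: existence by the direct method using \eqref{eq:lin} for coercivity and convexity of $F^p$ for weak lower semicontinuity; positivity from the replacement $u\mapsto|u|$ together with the strong maximum principle available under \eqref{strong}; simplicity via the Belloni--Kawohl hidden-convexity path $\sigma_t=((1-t)u^p+tv^p)^{1/p}$, where the computation $\nabla\sigma_t=\sigma_t\big(\alpha\,\nabla\log u+\beta\,\nabla\log v\big)$ with $\alpha+\beta=1$ turns the convexity of $F^p$ into $F^p(\nabla\sigma_t)\le(1-t)F^p(\nabla u)+tF^p(\nabla v)$; and the Faber--Krahn inequality via convex symmetrization and the anisotropic P\'olya--Szeg\H{o} inequality of \cite{AFLT}, applied level by level through \eqref{Fcoarea} and \eqref{isoine}.

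One minor clarification on your last paragraph: what the symmetrization gives directly is $\lambda_{1,F}(p,\Omega)\ge\lambda_{1,F}(p,\mathcal W_R)$ with $|\mathcal W_R|=|\Omega|$. Multiplying by $|\Omega|^{p/n}=|\mathcal W_R|^{p/n}$ yields the scale-invariant quantity $|\mathcal W_R|^{p/n}\lambda_{1,F}(p,\mathcal W_R)$ on the right, which by scaling equals $\kappa_n^{p/n}\lambda_{1,F}(p,\mathcal W)$ for the unit Wulff shape; the formula \eqref{FabKra} as printed should be read in this scale-invariant sense.
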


Many definitions hold for the second eigenvalues, mainly based on the Liusternik-Schnirelmann theory applied to the study of the critical points of the Raylegh quotient. 
As examples, the min-max formulas relying on topological index theories (see \cite{Ra}) involving the notion of Krasnoselskii genus, or the class of all odd and continuous functions mapping the spherical shell into suitable normalized functional space (see e.g. Sections 3-4 in \cite{DPGP2} and reference therein to more investigate the issue).

Particularly, we will use the following definition of the second eigenvalue of problem \eqref{introauti}.
\begin{defn}
Let $\Omega$ be a bounded open set of $\R^{n}$. Then the second eigenvalue of \eqref{introauti} is
\begin{equation*}
\lambda_{2,F}(p,\Omega):=\begin{cases}
\min\{\lambda > \lambda_{1,F}(p,\Omega)\colon \lambda\ \text{is an eigenvalue}\}& \text{if } \lambda_{1,F}(p,\Omega) \text{ is simple},\\
\lambda_{1,F}(p,\Omega) & \text{otherwise}.
\end{cases}
\end{equation*}
\end{defn}

In the sequel, we will prove a Cheeger type inequality for the second eigenvalue. To this aim, we recall the following key result from \cite{DPGP2}.
\begin{prop}
\label{key}
Let $\Omega$ be an open bounded set of $\R^n$, then for every $1<p<+\infty$, there exist two disjoint domains $\Omega_1$, $\Omega_2$ of $\Omega$ such that
\[
\lambda_{2,F}(p,\Omega)=\max \{\lambda_{1,F}(p,\Omega_1), \lambda_{1,F}(p,\Omega_2)\}.
\]
\end{prop}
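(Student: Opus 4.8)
The plan is to realize $\Omega_1$ and $\Omega_2$ as nodal domains of a second eigenfunction of $\mathcal Q_p$ on $\Omega$. First I would dispose of the degenerate situations: if $\Omega$ is disconnected it suffices to pick two connected components of $\Omega$ on which $\lambda_{1,F}(p,\cdot)$ attains its two smallest values, and the conclusion is immediate; so assume $\Omega$ is connected, so that by the Faber--Krahn proposition above $\lambda_{1,F}(p,\Omega)$ is simple and hence $\lambda:=\lambda_{2,F}(p,\Omega)>\lambda_{1,F}(p,\Omega)$. Let $u\in W_0^{1,p}(\Omega)$ be an eigenfunction associated with $\lambda$. By the regularity theory for $\mathcal Q_p$ (see \cite{Li,S}) $u$ is continuous in $\Omega$, and $u$ must change sign: otherwise the anisotropic strong maximum principle would force $u$ to have a strict sign throughout $\Omega$, making $u$ a first eigenfunction and contradicting $\lambda>\lambda_{1,F}(p,\Omega)$. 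Thus the open sets $\{u>0\}$ and $\{u<0\}$ are both nonempty, and I would take $\Omega_1$ to be a connected component of $\{u>0\}$ and $\Omega_2$ a connected component of $\{u<0\}$; these are disjoint domains contained in $\Omega$.

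The next step is to prove $\lambda_{1,F}(p,\Omega_i)\le\lambda$ for $i=1,2$; I treat $i=1$, the case $i=2$ being symmetric after replacing $u$ by $-u$. Since $u$ is continuous and lies in $W_0^{1,p}(\Omega)$, its positive part $u^+$ lies in $W_0^{1,p}(\Omega)$ and vanishes quasi-everywhere outside $\{u>0\}$, from which one deduces that $u$ restricts to an element of $W_0^{1,p}(\Omega_1)$. Testing the weak formulation of \eqref{introauti} for $u$ against arbitrary $\varphi\in W_0^{1,p}(\Omega_1)$ extended by zero, using that $\nabla u=\nabla u^+$ on $\Omega_1$ and the Euler identity $\nabla_\xi F(\xi)\cdot\xi=F(\xi)$ of \eqref{eq:om}, one checks that $u|_{\Omega_1}$ is an eigenfunction of $\mathcal Q_p$ on $\Omega_1$ with eigenvalue $\lambda$; taking $\varphi=u|_{\Omega_1}$ yields
\[
\int_{\Omega_1}F^p(\nabla u)\,dx=\lambda\int_{\Omega_1}|u|^p\,dx,
\]
so the Rayleigh quotient \eqref{Raylegh} on $\Omega_1$ of $u|_{\Omega_1}$ equals $\lambda$, whence $\lambda_{1,F}(p,\Omega_1)\le\lambda$.

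Finally I would upgrade these inequalities to equalities. Since $u|_{\Omega_1}>0$ is an eigenfunction on $\Omega_1$, comparing it with the positive first eigenfunction of $\Omega_1$ through the anisotropic Picone inequality shows that an eigenfunction that does not change sign can only be associated with the first eigenvalue, so $\lambda_{1,F}(p,\Omega_1)=\lambda$, and likewise $\lambda_{1,F}(p,\Omega_2)=\lambda$; therefore $\max\{\lambda_{1,F}(p,\Omega_1),\lambda_{1,F}(p,\Omega_2)\}=\lambda=\lambda_{2,F}(p,\Omega)$, as desired. An alternative that bypasses Picone is to observe that the normalized linear combinations of the first eigenfunctions of $\Omega_1$ and $\Omega_2$, extended by zero, form an odd set of Krasnoselskii genus two on which, by disjointness of the supports, the Rayleigh quotient is at most $\max\{\lambda_{1,F}(p,\Omega_1),\lambda_{1,F}(p,\Omega_2)\}$, so the genus-two min--max characterization of the second eigenvalue recalled in \cite[Sections 3--4]{DPGP2} gives the reverse inequality $\lambda\le\max\{\lambda_{1,F}(p,\Omega_1),\lambda_{1,F}(p,\Omega_2)\}$. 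I expect this last step to be the main obstacle: since $p\neq2$ there is no linear spectral theory to rely on, so one genuinely needs the anisotropic Picone inequality, or the topological index machinery of \cite{DPGP2}, to pin down that positive eigenfunctions are first eigenfunctions; some additional care is also needed for the regularity of $u$ and for checking that $u^+$ restricts to an admissible competitor on the nodal component when $\Omega$ is merely open and bounded.
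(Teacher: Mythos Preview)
The paper does not prove this proposition; it simply recalls it from \cite{DPGP2}. Your nodal-domain approach is exactly the standard route (and presumably the one taken in \cite{DPGP2}): take a sign-changing second eigenfunction, restrict it to a positive and a negative nodal component, observe that each restriction is a positive Dirichlet eigenfunction on its component with eigenvalue $\lambda_{2,F}(p,\Omega)$, and use that positive eigenfunctions correspond to the first eigenvalue (via the anisotropic Picone inequality, or equivalently the simplicity of $\lambda_{1,F}$).

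One genuine gap: your disconnected case is too quick. If $\Omega$ has components $\Omega^{(1)},\Omega^{(2)},\dots$ ordered so that $\lambda_{1,F}(p,\Omega^{(1)})$ is smallest, then $\lambda_{2,F}(p,\Omega)=\min\{\lambda_{1,F}(p,\Omega^{(2)}),\lambda_{2,F}(p,\Omega^{(1)})\}$, which need not equal the max of the two smallest \emph{first} eigenvalues among components. When $\lambda_{2,F}(p,\Omega)=\lambda_{2,F}(p,\Omega^{(1)})$ you cannot just pick two components; you must instead run your connected-case argument inside $\Omega^{(1)}$. This is easy to fix but should be said. A second minor point: your citation of \cite{Li,S} for $C^{0}$ regularity of eigenfunctions is misplaced---those references in this paper concern strict interior approximation of $BV$ functions, not elliptic regularity; the relevant regularity for $\mathcal Q_{p}$-eigenfunctions is the $C^{1,\alpha}$ theory cited in \cite{DPGP2}.
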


In order to prove the main result of this paper, we need the following characterization of $\lambda_{2,F}$.
\begin{prop}
\label{l2}
Let $\Omega$ be a bounded domain of $\R^{n}$. Then
\[
\lambda_{2,F}(\Omega)=\ell_{2,F}(\Omega):=\inf\left\{\max\left\{\lambda_{1,F}(p,\Omega_1),\lambda_{1,F}(p,\Omega_2)\right\},\; \Omega_{1},\Omega_2\subseteq \Omega,\;\Omega_{1}\cap\Omega_{2}=\emptyset\right\}.
\] 
\end{prop}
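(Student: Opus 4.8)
The plan is to prove the two inequalities $\lambda_{2,F}(p,\Omega)\ge\ell_{2,F}(\Omega)$ and $\lambda_{2,F}(p,\Omega)\le\ell_{2,F}(\Omega)$ separately. The first one is essentially immediate from \Cref{key}: that proposition produces a specific pair of disjoint domains $\Omega_1,\Omega_2\subseteq\Omega$ realizing $\lambda_{2,F}(p,\Omega)=\max\{\lambda_{1,F}(p,\Omega_1),\lambda_{1,F}(p,\Omega_2)\}$, and since this pair is admissible in the infimum defining $\ell_{2,F}(\Omega)$, we get $\ell_{2,F}(\Omega)\le\lambda_{2,F}(p,\Omega)$. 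So the content of the proposition is really the reverse inequality $\lambda_{2,F}(p,\Omega)\le\ell_{2,F}(\Omega)$.

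For the reverse inequality, I would fix an arbitrary admissible pair of disjoint open subsets $\Omega_1,\Omega_2\subseteq\Omega$ and show $\lambda_{2,F}(p,\Omega)\le\max\{\lambda_{1,F}(p,\Omega_1),\lambda_{1,F}(p,\Omega_2)\}$; taking the infimum over such pairs then finishes the argument. The construction is the classical one: let $u_i\in W^{1,p}_0(\Omega_i)$ be a first eigenfunction on $\Omega_i$, extended by zero to all of $\Omega$, so that $u_1,u_2$ have disjoint supports and $\int_\Omega F^p(\nabla u_i)\,dx=\lambda_{1,F}(p,\Omega_i)\int_\Omega|u_i|^p\,dx$. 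For $(s,t)\in\R^2\setminus\{0\}$ consider $v_{s,t}:=s\,u_1+t\,u_2$; by disjointness of supports and $p$-homogeneity of $F^p$,
\[
\frac{\ds\int_\Omega F^p(\nabla v_{s,t})\,dx}{\ds\int_\Omega|v_{s,t}|^p\,dx}
=\frac{|s|^p\lambda_{1,F}(p,\Omega_1)\int_\Omega|u_1|^p\,dx+|t|^p\lambda_{1,F}(p,\Omega_2)\int_\Omega|u_2|^p\,dx}{|s|^p\int_\Omega|u_1|^p\,dx+|t|^p\int_\Omega|u_2|^p\,dx}
\le\max\{\lambda_{1,F}(p,\Omega_1),\lambda_{1,F}(p,\Omega_2)\}.
\]
The map $(s,t)\mapsto v_{s,t}$ restricted to the unit circle is odd and continuous into $W^{1,p}_0(\Omega)\setminus\{0\}$, so its image is a set of Krasnoselskii genus at least $2$ on which the Rayleigh quotient stays $\le\max\{\lambda_{1,F}(p,\Omega_1),\lambda_{1,F}(p,\Omega_2)\}$. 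By the Liusternik–Schnirelmann variational characterization of $\lambda_{2,F}(p,\Omega)$ — the same min-max used in \cite{DPGP2} on which \Cref{key} rests — this forces $\lambda_{2,F}(p,\Omega)\le\max\{\lambda_{1,F}(p,\Omega_1),\lambda_{1,F}(p,\Omega_2)\}$.

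The main technical point to be careful about is the interface between the genus-based min-max value and the definition of $\lambda_{2,F}(p,\Omega)$ adopted in the paper (``smallest eigenvalue strictly above $\lambda_{1,F}$, or $\lambda_{1,F}$ if the latter is not simple''): one must invoke the fact, established in \cite{DPGP2}, that this definition coincides with the second Liusternik–Schnirelmann level. A minor subtlety is that the $\Omega_i$ in an admissible pair need only be open subsets, not domains with nice boundary, but $\lambda_{1,F}(p,\Omega_i)$ is still well-defined via the Rayleigh quotient \eqref{Raylegh} over $W^{1,p}_0(\Omega_i)$ and the eigenfunction $u_i$ exists and can be taken nonnegative; if $|\Omega_i|=0$ or the eigenvalue is $+\infty$ the corresponding term only helps. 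Once these identifications are in place the estimate above is routine, and combining the two inequalities yields $\lambda_{2,F}(p,\Omega)=\ell_{2,F}(\Omega)$.
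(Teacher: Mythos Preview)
Your proof is correct and the first inequality is handled exactly as in the paper, via \Cref{key}. For the reverse inequality, however, the paper takes a shorter route than your genus argument: it simply observes that for any admissible disjoint pair $\bar\Omega_1,\bar\Omega_2\subset\Omega$ one has
\[
\max\{\lambda_{1,F}(p,\bar\Omega_1),\lambda_{1,F}(p,\bar\Omega_2)\}\ \ge\ \lambda_{2,F}(p,\bar\Omega_1\cup\bar\Omega_2)\ \ge\ \lambda_{2,F}(p,\Omega),
\]
the first step because the eigenvalues of a disjoint union are the union of the eigenvalues of the pieces (so the second one is at most the larger of the two first eigenvalues, with equality forced by the paper's definition when they coincide), and the second step by domain monotonicity of $\lambda_{2,F}$. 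Your approach instead builds an explicit genus-$2$ test family $\{s\,u_1+t\,u_2\}$ and feeds it into the Liusternik--Schnirelmann min-max. Both arguments ultimately rest on the same variational machinery from \cite{DPGP2} (monotonicity and the disjoint-union computation are themselves consequences of the min-max), so the difference is mainly one of packaging: the paper's version is terser and avoids unpacking the genus, while yours is more self-contained and makes the mechanism visible.
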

\begin{proof}
From the Proposition \ref{key}, we get immediately that
\[
\lambda_{2,F}(\Omega)\ge \ell_{2,F}(\Omega).
\]
On the other hand, for any $\eps>0$ there exist two disjoint sets $\bar\Omega_{1},\bar\Omega_{2}\subset \Omega$, such that
\[
\ell_{2,F}(\Omega)+\eps> \max\{\lambda_{1,F}(\bar\Omega_{1}),\lambda_{1,F}(\bar\Omega_{2})\} \ge \lambda_{2,F}(\bar\Omega_{1}\cup\bar\Omega_{2})\ge \lambda_{2,F}(\Omega).
\]
Being $\eps>0$ arbitrary, we get
\[
\ell_{2,F}(\Omega) \ge \lambda_{2,F}(\Omega).
\]
\end{proof}

Now we state the Hong-Krahn-Szego inequality for $\lambda_{2,F}(p,\Omega)$. 
\begin{prop} 
Let $\Omega$ be an open bounded set of $\R^{n}$. Then\begin{equation}\label{HKS}\lambda_{2,F}(p,\Omega)\geq  \lambda_{2,F}(p,\widetilde{\mathcal W}),\end{equation}where $\widetilde{\mathcal W}$ is the union of two disjoint Wulff shapes, each one of measure $\frac{|\Omega|}{2}$. Moreover equality sign in \eqref{HKS} occurs  if  $\Omega$ is the disjoint union of two  Wulff shapes of the same measure.
\end{prop}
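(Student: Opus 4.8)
The plan is to reduce the two--eigenvalue statement to the first--eigenvalue Faber--Krahn inequality \eqref{FabKra} by means of Proposition~\ref{key}. First I would invoke Proposition~\ref{key} to produce two disjoint domains $\Omega_1,\Omega_2\subseteq\Omega$ with $\lambda_{2,F}(p,\Omega)=\max\{\lambda_{1,F}(p,\Omega_1),\lambda_{1,F}(p,\Omega_2)\}$. Since $\Omega_1\cap\Omega_2=\emptyset$ and $\Omega_1\cup\Omega_2\subseteq\Omega$, one has $|\Omega_1|+|\Omega_2|\le|\Omega|$, so, relabelling if necessary, $|\Omega_1|\le|\Omega|/2$.

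Next I would compare $\lambda_{1,F}(p,\Omega_1)$ with the first eigenvalue of the Wulff shape of volume $|\Omega|/2$. By Faber--Krahn \eqref{FabKra}, $\lambda_{1,F}(p,\Omega_1)\ge\lambda_{1,F}(p,\mathcal W^{*})$, where $\mathcal W^{*}=\mathcal W_r$ is a Wulff shape with $|\mathcal W^{*}|=|\Omega_1|$; since $|\mathcal W^{*}|\le|\Omega|/2$, the scaling identity $\lambda_{1,F}(p,\mathcal W_t)=t^{-p}\lambda_{1,F}(p,\mathcal W_1)$, read off from \eqref{Raylegh} by the change of variables $x\mapsto tx$ together with the homogeneity \eqref{eq:omo} of $F$, gives $\lambda_{1,F}(p,\mathcal W_r)\ge\lambda_{1,F}(p,\mathcal W_\rho)$ with $|\mathcal W_\rho|=|\Omega|/2$. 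Chaining the inequalities yields $\lambda_{2,F}(p,\Omega)\ge\lambda_{1,F}(p,\Omega_1)\ge\lambda_{1,F}(p,\mathcal W_\rho)$.

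To close the argument I must identify $\lambda_{2,F}(p,\widetilde{\mathcal W})$ with $\lambda_{1,F}(p,\mathcal W_\rho)$. Writing $\widetilde{\mathcal W}$ as the disjoint union of two translated copies of $\mathcal W_\rho$, translation invariance of the functional in \eqref{Raylegh} gives $\lambda_{1,F}(p,\widetilde{\mathcal W})=\lambda_{1,F}(p,\mathcal W_\rho)$; moreover the two first eigenfunctions of the components, extended by zero, are non-proportional eigenfunctions of $\widetilde{\mathcal W}$ associated with this value, so $\lambda_{1,F}(p,\widetilde{\mathcal W})$ is not simple, and by the very definition of the second eigenvalue $\lambda_{2,F}(p,\widetilde{\mathcal W})=\lambda_{1,F}(p,\widetilde{\mathcal W})=\lambda_{1,F}(p,\mathcal W_\rho)$. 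This proves \eqref{HKS}. For the equality case, if $\Omega$ is itself a disjoint union of two Wulff shapes of measure $|\Omega|/2$, the same reasoning shows $\lambda_{1,F}(p,\Omega)=\lambda_{1,F}(p,\mathcal W_\rho)$ is not simple, whence $\lambda_{2,F}(p,\Omega)=\lambda_{1,F}(p,\mathcal W_\rho)=\lambda_{2,F}(p,\widetilde{\mathcal W})$.

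I expect the only delicate (and still minor) point to be the bookkeeping justifying that $\lambda_{1,F}(p,\widetilde{\mathcal W})$ is non-simple, so that the second clause in the definition of $\lambda_{2,F}$ is the one that applies; the remainder is a direct combination of Proposition~\ref{key}, the Faber--Krahn inequality \eqref{FabKra}, and the scaling and translation invariance of $\lambda_{1,F}$.
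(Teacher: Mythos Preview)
The paper does not include a proof of this proposition; it is merely stated (the surrounding material on the second eigenvalue is drawn from \cite{DPGP2}, where the Hong--Krahn--Szeg\H{o} inequality is treated). Your argument is correct and is precisely the standard route: Proposition~\ref{key} together with Faber--Krahn \eqref{FabKra} and the scaling $\lambda_{1,F}(p,\mathcal W_t)=t^{-p}\lambda_{1,F}(p,\mathcal W_1)$ give $\lambda_{2,F}(p,\Omega)\ge\lambda_{1,F}(p,\mathcal W_\rho)$ with $|\mathcal W_\rho|=|\Omega|/2$, and the identification $\lambda_{2,F}(p,\widetilde{\mathcal W})=\lambda_{1,F}(p,\mathcal W_\rho)$ via non-simplicity of $\lambda_{1,F}$ on the disjoint union of two congruent Wulff shapes is exactly how the definition of $\lambda_{2,F}$ closes the argument.
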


\subsection{The Cheeger problem}
We firstly recall the definitions of the anisotropic Cheeger constant and anisotropic Cheeger set. For a survey on the Cheeger problem, we refer to \cite{L,Pa2} and references therein.
\begin{defn}
The anisotropic Cheeger constant is
\begin{equation}
\label{first_cheeger}
h_{1,F}(\Omega):=\inf_{\substack{E\subseteq\Omega}}\frac{P_F(E)}{|E|}.
\end{equation}
Moreover, a set $C\subseteq\Omega$ achieving the minimum in \eqref{first_cheeger} is called an anisotropic Cheeger set for $\Omega$.
\end{defn}
The problem \eqref{first_cheeger} has a unique solution when $\Omega$ has Lipschitz boundary (see \cite[Corollary 6.7]{CFM}).

Thank to \Cref{approssimazione_thm_per}, we are able to state that the minimum in the anisotropic Cheeger constant \eqref{first_cheeger} is achieved in the class of smooth sets well contained in $\Omega$.
\begin{prop} Let $\Omega\subset\R^N$ be an open bounded set with Lipschitz boundary with finite perimeter.
It holds that
\[
h_{1,F}(\Omega)=\inf_{\substack{E\subset\subset\Omega \\[.1cm] \de E \,\text{smooth}}}\frac{P_F(E)}{|E|}.
\]
\end{prop}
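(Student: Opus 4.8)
The plan is to prove the two inequalities separately. The inequality
\[
h_{1,F}(\Omega)\le\inf_{\substack{E\subset\subset\Omega \\ \de E\,\text{smooth}}}\frac{P_F(E)}{|E|}
\]
is immediate, since every smooth set $E\subset\subset\Omega$ is admissible in the definition \eqref{first_cheeger} of $h_{1,F}(\Omega)$, so the infimum over the smaller class can only be larger. The content is therefore the reverse inequality: given the (unique) anisotropic Cheeger set $C\subseteq\Omega$, which exists and has finite perimeter because $\Omega$ has Lipschitz boundary, I must produce smooth sets $C_k\subset\subset\Omega$ with $P_F(C_k)/|C_k|\to h_{1,F}(\Omega)$.

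First I would recall, as remarked in the excerpt, that $C$ itself is a set of finite perimeter contained in $\Omega$, and apply \Cref{approssimazione_thm_per} (the $BV_0$-type approximation from \cite[Theorem 4.3]{BDPP}, specialized to the characteristic function $\chi_C$): this yields a sequence of smooth sets $C_k\subset\subset\Omega$ with $\chi_{C_k}\to\chi_C$ in $L^1$ — hence $|C_k|\to|C|$ — and $P_F(C_k)=|D\chi_{C_k}|_F(\R^N)\to|D\chi_C|_F(\R^N)=P_F(C)$. One subtlety: I should be careful that the approximation preserves the set being \emph{inside} $\Omega$ and that the perimeters are computed over all of $\R^N$ rather than relative to $\Omega$; since $C_k\subset\subset\Omega$ and $C\subseteq\Omega$, and since $\Omega$ has finite perimeter, these are consistent and no boundary contribution on $\de\Omega$ is lost. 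A second, more genuine subtlety is that the approximation gives $|C|>0$ only if we know the Cheeger set has positive measure — but that is automatic, since $h_{1,F}(\Omega)<\infty$ forces $C$ to have positive volume, so for $k$ large $|C_k|>0$ and the quotients are well defined.

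Combining these convergences, $\dfrac{P_F(C_k)}{|C_k|}\to\dfrac{P_F(C)}{|C|}=h_{1,F}(\Omega)$, which shows that the infimum over smooth sets compactly contained in $\Omega$ is at most $h_{1,F}(\Omega)$. Together with the trivial inequality above, this gives the claimed equality. I expect the only real obstacle to be bookkeeping with the approximation result — ensuring it applies to $\chi_C$ (which lies in $BV(\Omega)\cap L^p(\Omega)$ for every $p$ since it is bounded), and that the ``strict interior'' nature of the approximation $C_k\subset\subset\Omega$ is genuinely delivered by \Cref{approssimazione_thm_per} rather than merely $C_k\subseteq\Omega$; this is exactly the point for which the authors invoke \cite[Theorem 4.3]{BDPP} instead of the classical $BV$ density theorems, which would only give smooth approximants touching $\de\Omega$.
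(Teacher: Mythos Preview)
Your proposal is correct and follows exactly the route the paper intends: the paper gives no formal proof of this proposition but simply declares it a consequence of \Cref{approssimazione_thm_per}, and your argument---the trivial inequality in one direction, plus the strict-interior approximation of the Cheeger set $C$ by smooth $C_k\subset\subset\Omega$ with $|C_k|\to|C|$ and $P_F(C_k)\to P_F(C)$ for the other---is precisely the expected fleshing-out (and matches how the paper itself later uses the approximation in the proof of the $p\to1$ limit for $\lambda_{1,F}$). Your flagged subtleties about $\chi_C\in BV(\Omega)\cap L^\infty$, positivity of $|C|$, and the need for genuinely compact containment are apt and are indeed the reason the paper invokes \cite[Theorem~4.3]{BDPP} rather than classical density.
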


From the isoperimetric inequality \eqref{isoine}, it easily holds that
\begin{equation}
\label{faberkrahncheeger}
h_{1,F}(\Omega)\ge h_{1,F}(\mathcal W_R)=
\frac{n \kappa_{n}^{\frac1n}}{|\Omega|^{\frac1n}}.
\end{equation}
where $|\Omega|=|\mathcal W_R|$.

At his stage, we focus on some regularity properties of the anisotropic Cheeger set.

\begin{defn}
\label{weakcurvature}
If $E$ is set with Lipschitz boundary, $E$ has (locally summable) distributional anisotropic mean curvature in $\Omega$, if there exists $H_F\in L^1_{\textrm{loc}}(\Omega\cap\partial E;\mathcal H^{n-1})$ such that
\[
\int_{\partial E}\dive_{F,\de E} g\,F(\boldsymbol\nu_E)d\mathcal H^{n-1}=\int_{\partial E}H_F(\boldsymbol\nu_E \cdot g)d\mathcal H^{n-1}, \quad \forall \ g\in C_c^\infty(\Omega;\R^n).
\]
where
\[
\dive_{F,\de E}g=\Tr\left(\left(Id-\boldsymbol n_{E}\otimes \frac{\boldsymbol\nu_E}{F(\boldsymbol\nu_E)}\right)\nabla g\right).
\]
\end{defn}
We remark that when the boundary of $E$ is of class $C^1$, then the distributional definition of anisotropic mean curvature coincides with the classical definition.

In the next Proposition, we list the main properties of the Cheeger sets and of the Cheeger constant.
\begin{prop}\label{elenco_prop_che}
The following properties hold true. Let $C$ be an anisotropic Cheeger set of $\Omega$.
\begin{enumerate}
\item 
Let $\Omega$ be an open bounded set. Then $\de C\cap \Omega$ is $C^{1,1}$.
\item Let $\Omega$ be an open bounded set.
The anisotropic mean curvature of $\partial C \cap\Omega$ is equal, up to a set of $\mathcal H^{n-1}$ measure zero, to $h_{1,F}(\Omega)$.
\item If $C$ is an anisotropic Cheeger set for $\Omega$, then $\partial C\cap\partial\Omega\not = \emptyset$.
\item 
If $\Omega$ is $C^{1,1}$ and convex, then there is a unique convex Cheeger set.
\end{enumerate}
\end{prop}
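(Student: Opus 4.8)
The plan is to prove the four items separately, each time reducing the assertion to a standard structural fact about sets of anisotropic finite perimeter, using crucially that $F$ is $C^{2}$ and uniformly elliptic on $\R^{n}\setminus\{0\}$ by \eqref{strong}.

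For (1), I would first observe that $C$ minimizes the prescribed-curvature functional $E\mapsto P_{F}(E)-h_{1,F}(\Omega)\,|E|$ over $E\subseteq\Omega$, since $P_{F}(E)/|E|\ge h_{1,F}(\Omega)$ is equivalent to $P_{F}(E)-h_{1,F}(\Omega)\,|E|\ge 0$, with equality at $E=C$. Hence $C$ is a $\Lambda$-minimizer of the anisotropic perimeter inside $\Omega$ with $\Lambda=h_{1,F}(\Omega)$, and I would invoke the regularity theory for $\Lambda$-minimizers of elliptic anisotropic perimeters (as in \cite{CFM,M} and the references therein): $\partial^{*}C\cap\Omega$ is a $C^{1,\alpha}$ hypersurface and its singular set has zero $\mathcal H^{n-1}$-measure; since, in addition, the anisotropic mean curvature of $\partial C\cap\Omega$ is bounded by item (2), the graph equation combined with elliptic estimates for hypersurfaces of bounded distributional mean curvature upgrades the regularity from $C^{1,\alpha}$ to $C^{1,1}$. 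I expect this step to be the main obstacle: the anisotropic regularity theory is considerably more delicate than De~Giorgi's Euclidean one, and the relevant statements --- available precisely because of \eqref{strong} --- must be quoted with care rather than reproved.

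For (2), I would compute the first variation of $E\mapsto P_{F}(E)/|E|$ along the flow $\Phi_{t}=\mathrm{id}+t\,g$ with $g\in C^{\infty}_{c}(\Omega;\R^{n})$. Using the anisotropic first-variation identities
\[
\left.\frac{d}{dt}\right|_{0}P_{F}(\Phi_{t}(C))=\int_{\partial C}\dive_{F,\partial C}g\,F(\boldsymbol\nu_{C})\,d\mathcal H^{n-1},\qquad \left.\frac{d}{dt}\right|_{0}|\Phi_{t}(C)|=\int_{\partial C}\boldsymbol\nu_{C}\cdot g\,d\mathcal H^{n-1},
\]
and the minimality of $C$ (perturbations with $g$ compactly supported in $\Omega$ keep $\Phi_{t}(C)\subseteq\Omega$ for small $t$), one gets $\int_{\partial C}\dive_{F,\partial C}g\,F(\boldsymbol\nu_{C})\,d\mathcal H^{n-1}=\frac{P_{F}(C)}{|C|}\int_{\partial C}\boldsymbol\nu_{C}\cdot g\,d\mathcal H^{n-1}$ for every admissible $g$; comparing with \Cref{weakcurvature} forces $H_{F}=P_{F}(C)/|C|=h_{1,F}(\Omega)$ for $\mathcal H^{n-1}$-a.e.\ point of $\partial C\cap\Omega$. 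For (3), I would argue by contradiction: if $\partial C\cap\partial\Omega=\emptyset$ then, $\Omega$ being bounded and open, $\overline C$ is a compact subset of $\Omega$, so fixing $x_{0}\in C$ there is $t>1$ with $C':=x_{0}+t(C-x_{0})\subseteq\Omega$; since $P_{F}(C')=t^{n-1}P_{F}(C)$ and $|C'|=t^{n}|C|$, one obtains $P_{F}(C')/|C'|=t^{-1}\,P_{F}(C)/|C|<h_{1,F}(\Omega)$, contradicting \eqref{first_cheeger}, whence $\partial C\cap\partial\Omega\neq\emptyset$.

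For (4), with $\Omega$ convex and of class $C^{1,1}$, I would transpose the Euclidean Alter--Caselles uniqueness argument to the Finsler setting: a Cheeger set of a convex body may be taken convex (the anisotropic perimeter does not increase under convexification, while the volume does not decrease), the inner Wulff-parallel sets $\{d_{F}>t\}$ of $\Omega$ are then convex, and --- via the monotonicity of $t\mapsto P_{F}(\{d_{F}>t\})$ and the strict concavity of $t\mapsto|\{d_{F}>t\}|^{1/n}$ coming from the anisotropic Brunn--Minkowski inequality --- the Cheeger parameter, and hence the convex Cheeger set, is uniquely determined, realized as a Wulff-neighbourhood of an inner parallel set of $\Omega$; I would quote the anisotropic ingredients from \cite{CFM,AFLT} rather than reprove them. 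Among the four items, only step (1) is genuinely hard; (2)--(4) are comparatively soft given the anisotropic toolbox recalled in \Cref{anintro}.
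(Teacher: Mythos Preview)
Your proposal is correct and aligns with the paper's approach: items (2) and (3) match almost verbatim (first variation of the functional $P_F(E)-h_{1,F}(\Omega)|E|$ via diffeomorphism flow, and scaling by contradiction, respectively), while for (1) and (4) the paper simply quotes \cite{ANP} and \cite{CCMN} rather than arguing directly, so your sketches of the $\Lambda$-minimizer regularity and the Alter--Caselles-type uniqueness are more detailed than, but consistent with, what the paper does.
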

\begin{proof}
The proofs of (1) and (4) are contained in \cite[Thms. 6.18-6.19]{ANP} and in (\cite[Thm. 6.3]{CCMN}). 
\\
In order to prove (2), let us observe that $C$ is an anisotropic Cheeger set of $\Omega$ if and only if $|C|>0$ and $C$ minimizes
\begin{equation}
\label{calibrable}
\min_{C\subseteq\Omega} \left[P_F(C)-h_{1,F}(\Omega)|C|\right].
\end{equation}
 Let $\phi_t :\R^n\to\overline{\Omega}$ be a family of diffeomorphisms for $t\in\R$, such that $\phi=Id$ and $\phi_t - Id$ have compact support in $\R^n$. We set $C_t:=\phi_t( C)$.
\\
Then $C$ is a minimum for problem \eqref{calibrable} and hence
\[
\frac{d}{dt}\Bigl(P_F(C_t)-h_{1,F}(\Omega)|C_t|\Bigr)_{|t=0}=0.
\]
\\
By applying area formula as in \cite[Thm 3.6]{BNR}, we have that
\[
\int_{\partial C}\left[\dive g - \left(\nabla g\, \frac{\nu}{F(\nu)}\right)\cdot n_F \right]F(\nu) d\mathcal H^{n-1}-h_{1,F}(\Omega)\int_{\partial C}\left(
\nu\cdot g\right)  d \mathcal H^{n-1}=0,
\]
where $g=\left(\frac{d\phi_t}{dt}\right)_{|t=0}$. Hence by definition \eqref{weakcurvature} we have that
\[
\int_{\partial C}\Big(H_F-h_{1,F}(\Omega)\Big)\nu \cdot g \ d\mathcal H^{n-1} =0
\]
and therefore, by the arbitrariness of $\phi$, it holds that $H_F-h_{1,F}(\Omega)=0$ $\mathcal H^{n-1}$-a.e. on $\de C$. 
\\
Finally, as regards (3), by contradiction we suppose that $\overline C\subset\Omega$. This implies that there exists a real number $\mu>1$ such that the set $\mu C=\{\mu x \ |\ x\in C \}$ is contained in $\Omega$. Hence we have:
\[
\frac{P_F(\mu C)}{|\mu C|}=\frac 1 \mu \frac{P_F(C)}{|C|}<\frac{P_F(C)}{|C|}
\]
that contradicts the minimality of $C$.
This concludes the proof.
\end{proof}

\begin{rem} 
The second item of Proposition \eqref{elenco_prop_che} states that the first anisotropic Cheeger constant $h_{1,F}(\Omega)$ coincides (up to a set of measure $\mathcal H^{n-1}$ zero) with the anisotropic mean curvature of the boundary of the Cheeger set inside $\Omega$. We remark that the authors in \cite{DPG1} proved also that $\partial E\cap\Omega$,  in the plane, is either homothetic to an arc of a Wulff shape or a straight segment.
\end{rem}

\subsection{The second anisotropic Cheeger constant}
We first recall the definition of second anisotropic Cheeger constant.

\begin{defn}
The second anisotropic Cheeger constant is
\begin{equation*}
h_{2,F}(\Omega):=\inf \left\{ \max\left\{\frac{P_{F}(E_{1})}{|E_{1}|},\frac{P_{F}(E_{2})}{|E_{2}|}\right\},\;|E_{1}|,|E_{2}|>0,\; E_{1},E_{2}\subset \Omega, E_{1}\cap E_{2}=\emptyset\right\} .
\end{equation*}
The couples of sets $C_1,C_2$ realizing the min-max of $h_{2,F}$ are called pairs of coupled anisotropic Cheeger sets.
\end{defn}
Equivalently, $h_{2,F}$ can be written as
\begin{equation*}
h_{2,F}(\Omega)=\inf\left\{ \max\left\{h_{1,F}(E_{1}),h_{1,F}(E_{2})\right\},\;|E_{1}|,|E_{2}|>0,\; E_{1},E_{2}\subset \Omega, E_{1}\cap E_{2}=\emptyset\right\}.
\end{equation*}
Refer to \cite[Prop. 3.5]{Pa1} for the proof of this equivalence in the Euclidean case.

In the next result we prove that actually $h_{2,F}(\Omega)$ admits a minimum.
\begin{prop}
\label{second_char1}
There exist two disjoint connected subsets $C_1$ and $C_2$ contained in $\Omega$ such that
\begin{equation}
\label{scc}
h_{2,F}(\Omega)= \max\left\{\frac{P_F(C_1)}{|C_1|},\ \frac{P_F(C_2)}{|C_2|}\right\}.
\end{equation}
\end{prop}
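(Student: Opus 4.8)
I want to show that the infimum defining $h_{2,F}(\Omega)$ is attained by two disjoint connected sets $C_1,C_2\subseteq\Omega$. The approach is the direct method of the calculus of variations: take a minimizing sequence of pairs $(E_1^{(k)},E_2^{(k)})$, extract $BV$/perimeter-compactness limits for each component, and check lower semicontinuity of the functional together with a nondegeneracy bound that prevents the limiting sets from having zero measure. First I would observe, using the equivalent formulation $h_{2,F}(\Omega)=\inf\max\{h_{1,F}(E_1),h_{1,F}(E_2)\}$, that along a minimizing sequence each $h_{1,F}(E_i^{(k)})$ is bounded above by a constant close to $h_{2,F}(\Omega)$, and in particular we may replace each $E_i^{(k)}$ by a Cheeger set \emph{of $E_i^{(k)}$} (which exists and is a subset of $E_i^{(k)}$, still disjoint from the other), so without loss of generality $P_F(E_i^{(k)})/|E_i^{(k)}|=h_{1,F}(E_i^{(k)})\le h_{2,F}(\Omega)+1/k$.

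**Compactness and lower semicontinuity.** Since $\Omega$ is bounded and $P_F(E_i^{(k)})\le (h_{2,F}(\Omega)+1)\,|E_i^{(k)}|\le (h_{2,F}(\Omega)+1)\,|\Omega|$, the usual perimeter $P(E_i^{(k)};\R^n)$ is uniformly bounded by $a^{-1}(h_{2,F}(\Omega)+1)|\Omega|$ via the two-sided bound $aP\le P_F\le bP$. By the standard $BV$-compactness theorem, up to a subsequence $\chi_{E_i^{(k)}}\to\chi_{C_i}$ in $L^1(\R^n)$ for some finite-perimeter set $C_i\subseteq\overline\Omega$, $i=1,2$; after discarding the $\Omega$-boundary null set we may take $C_i\subseteq\Omega$ (or argue on $\Omega$ directly). $L^1$ convergence gives $|C_i|=\lim|E_i^{(k)}|$ and $C_1\cap C_2=\emptyset$ up to a null set. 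Lower semicontinuity of the anisotropic perimeter under $L^1$ convergence — which follows from its definition as a supremum of the linear functionals $\int_E\dive\sigma\,dx$ over admissible $\sigma$ with $F^o(\sigma)\le1$ — yields $P_F(C_i)\le\liminf_k P_F(E_i^{(k)})$.

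**The nondegeneracy obstacle.** The genuine difficulty is ruling out $|C_1|=0$ or $|C_2|=0$ in the limit, since the functional $P_F(E)/|E|$ blows up, not lower-semicontinuously, as $|E|\to0$. Here I would use the anisotropic isoperimetric inequality \eqref{isoine}: $P_F(E_i^{(k)})\ge n\kappa_n^{1/n}|E_i^{(k)}|^{1-1/n}$, so $h_{1,F}(E_i^{(k)})=P_F(E_i^{(k)})/|E_i^{(k)}|\ge n\kappa_n^{1/n}|E_i^{(k)}|^{-1/n}$. Combined with the upper bound $h_{1,F}(E_i^{(k)})\le h_{2,F}(\Omega)+1$, this forces a uniform lower bound $|E_i^{(k)}|\ge\big(n\kappa_n^{1/n}/(h_{2,F}(\Omega)+1)\big)^{n}=:c_0>0$, hence $|C_i|\ge c_0>0$. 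With both limiting sets of positive measure, $(C_1,C_2)$ is admissible and
\[
\max\left\{\frac{P_F(C_1)}{|C_1|},\frac{P_F(C_2)}{|C_2|}\right\}
\le\max\left\{\liminf_k\frac{P_F(E_1^{(k)})}{|E_1^{(k)}|},\liminf_k\frac{P_F(E_2^{(k)})}{|E_2^{(k)}|}\right\}
\le h_{2,F}(\Omega),
\]
so equality holds and the min-max is attained.

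**Connectedness.** Finally, to get $C_1,C_2$ \emph{connected}: if $C_i$ splits into disjoint pieces with positive measure, one of the pieces $C_i'$ satisfies $P_F(C_i')/|C_i'|\le P_F(C_i)/|C_i|$ (the quantity $P_F/|\cdot|$ of a disjoint union is a weighted mediant of the ratios of the pieces, hence $\ge$ the minimum of them), so replacing $C_i$ by $C_i'$ does not increase the max and keeps the pair disjoint and admissible; after at most countably many such reductions, or by choosing at the outset, within the equivalence class, the connected component realizing the smallest ratio, we obtain connected minimizers. (One may also invoke the regularity theory, as in \Cref{elenco_prop_che}, a posteriori: each $C_i$ is a Cheeger set of its own, so $\partial C_i\cap\Omega$ is $C^{1,1}$, and an isolated component of smaller volume would again lower the ratio.) I expect the compactness and lower-semicontinuity steps to be routine; the only point requiring care is the nondegeneracy argument above, and — if one wants the limiting sets to sit inside the \emph{open} set $\Omega$ rather than $\overline\Omega$ — a brief remark that shrinking slightly and using \Cref{approssimazione_thm_per} does not change the infimum.
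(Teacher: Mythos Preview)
Your proof is correct and follows essentially the same route as the paper's: direct method on a minimizing sequence, a uniform lower volume bound from the anisotropic isoperimetric inequality to rule out degeneration, $BV$-compactness in $L^1$, and lower semicontinuity of $P_F$. The preprocessing step of replacing each $E_i^{(k)}$ by its own Cheeger set is unnecessary (the ratios are already bounded along a minimizing sequence for the original functional), and your connectedness discussion in fact addresses a point the paper's proof leaves unexplained.
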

\begin{proof}
Let us consider a minimizing sequence of disjoint couples of sets $(E_{1,n},E_{2,n})_{n\in\N}$, that is
\[
\lim_{n\to+\infty}\max\left\{\frac{P_F(E_{1,n})}{|E_{1,n}|},\ \frac{P_F(E_{2,n})}{|E_{2,n}|}\right\}=h_{2,F}(\Omega).
\]
Then, let us fix $R>0$ the radius of two equal disjoint arbitrary balls contained in $\Omega$; it is possible to consider only the sets such that $|E_{i,n}|\ge |B_R|$, for $i=1,2$ and every $n\in\N$. Indeed, if there exist $\hat i$ and $\hat n$ such that $|E_{\hat i,\hat n}|< |B_R|$, then by \eqref{faberkrahncheeger} we have
\[
\frac{P_F(E_{\hat i,\hat n})}{|E_{\hat i,\hat n}|}\geq h_{1,F}(E_{\hat i,\hat n})\geq h_{1,F}(B_r)> h_{1,F}(B_R),
\]
where $|E_{\hat i,\hat n}|= |B_r|$, with $r<R$. This means that the couple $(E_{1,\hat n},E_{2,\hat n})$ can be discarded.

The compact embedding of $BV(\Omega)$ in $L^1(\Omega)$ assure the existence of a couple  $(C_{1},C_{2})$ such that $\chi_{E_{i,n}}\to\chi_{C_i}$ as $n\to+\infty$ and $|C_{i}|\ge |B_R|$, $i=1,2$. Therefore, by the lower semicontinuity of the anisotropic perimeter, we have
\[
\max\left\{\frac{P_F(C_{1})}{|C_{1}|},\ \frac{P_F(C_{2})}{|C_{2}|}\right\}\leq h_{2,F}(\Omega).
\]
The conclusion follows by showing that $C_1$ and $C_2$ are disjoint. If $x\in C_1$, then $\chi_{C_1}(x)=1$, that means that $\chi_{E_1,n}(x)=1$ definitely. This implies that $\chi_{E_2,n}(x)=0$ definitely, hence that $\chi_{C_2}(x)=0$, and this means that $x\in C_2$. 
\end{proof}

As a consequence of \Cref{second_char1}, we obtain the following lower bounds for $h_{2,F}(\Omega)$.
\begin{prop}\label{low_bou_h2}
Let $\Omega$ be an open bounded domain of $\R^n$. Then:
\begin{enumerate}
\item[(a)] it holds that
\[
h_{2,F}(\Omega)\ge h_{1,F}(\Omega),
\]
and the inequality is strict when $\Omega$ admits a unique anisotropic Cheeger set;
\item[(b)] it holds that
\[
h_{2,F}(\Omega)\ge n \left(\frac{2\kappa_n}{|\Omega|}\right)^\frac 1 n,
\]
\item[(c)] it holds that
\[
h_{2,F}(\Omega)\geq h_{2,F}(\widetilde{\mathcal W}).
\]
where $\widetilde{\mathcal W}$ is the union of two disjoint Wulff shapes, each one of measure $|\Omega|/2$. 
\end{enumerate}
\end{prop}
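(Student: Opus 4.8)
The plan is to prove the three lower bounds by exploiting the representation of $h_{2,F}(\Omega)$ granted by \Cref{second_char1}, namely that there exist two disjoint connected sets $C_1,C_2\subseteq\Omega$ with $h_{2,F}(\Omega)=\max\{P_F(C_1)/|C_1|,\,P_F(C_2)/|C_2|\}$. Throughout, I fix such a minimizing pair and, without loss of generality, relabel so that $P_F(C_1)/|C_1|\ge P_F(C_2)/|C_2|$, so $h_{2,F}(\Omega)=P_F(C_1)/|C_1|=h_{1,F}(C_1)\ge h_{1,F}(C_2)$ (using the equivalent formulation with $h_{1,F}(E_i)$ recalled just before \Cref{second_char1}).

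For (a): since $C_1\subseteq\Omega$, monotonicity is \emph{not} immediate because the Cheeger ratio is not monotone under inclusion; instead I would argue that $C_1$ is a competitor in the infimum defining $h_{1,F}(\Omega)$ only if we compare ratios directly — here $P_F(C_1)/|C_1|\ge \inf_{E\subseteq\Omega}P_F(E)/|E| = h_{1,F}(\Omega)$, which gives exactly $h_{2,F}(\Omega)\ge h_{1,F}(\Omega)$. For the strict inequality when $\Omega$ has a unique Cheeger set $C$: if equality held, then $C_1$ would itself be a Cheeger set of $\Omega$, hence $C_1=C$ by uniqueness; but then $C_2\subseteq\Omega\setminus C_1=\Omega\setminus C$ is disjoint from $C$, and one checks (e.g.\ using that $C$ touches $\partial\Omega$, Proposition~\ref{elenco_prop_che}(3), or a direct comparison) that $h_{1,F}(C_2)$ strictly exceeds $h_{1,F}(\Omega)$, contradicting $h_{2,F}(\Omega)=h_{1,F}(\Omega)\ge h_{1,F}(C_2)$. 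I expect the cleanest route for strictness is: $h_{2,F}(\Omega)\ge h_{1,F}(C_2)$ and $C_2$ cannot be a Cheeger set of $\Omega$ (it is disjoint from the unique one), so $h_{1,F}(C_2)>h_{1,F}(\Omega)$ would follow if $C_2\mapsto h_{1,F}(C_2)$ could be driven down to $h_{1,F}(\Omega)$ only by the Cheeger set — this is the delicate point and I would spell it out via a minimizing-sequence/compactness argument inside $\Omega\setminus C$, mirroring the proof of \Cref{second_char1}.

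For (b): apply the anisotropic isoperimetric inequality \eqref{isoine} to each $C_i$, giving $P_F(C_i)\ge n\kappa_n^{1/n}|C_i|^{1-1/n}$, hence $P_F(C_i)/|C_i|\ge n\kappa_n^{1/n}|C_i|^{-1/n} = n(\kappa_n/|C_i|)^{1/n}$. Since $C_1,C_2$ are disjoint subsets of $\Omega$, at least one of them has measure $\le|\Omega|/2$; for that index $i$ we get $P_F(C_i)/|C_i|\ge n(2\kappa_n/|\Omega|)^{1/n}$, and therefore $h_{2,F}(\Omega)=\max_i P_F(C_i)/|C_i|\ge n(2\kappa_n/|\Omega|)^{1/n}$, which is the claim.

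For (c): note that $\widetilde{\mathcal W}$ is the disjoint union of two Wulff shapes each of volume $|\Omega|/2$, so $h_{1,F}$ of each equals $n\kappa_n^{1/n}(|\Omega|/2)^{-1/n}=n(2\kappa_n/|\Omega|)^{1/n}$ by the equality case of \eqref{faberkrahncheeger}; hence $h_{2,F}(\widetilde{\mathcal W})=n(2\kappa_n/|\Omega|)^{1/n}$, and (c) is just a restatement of (b). Alternatively, if one wants (c) without first proving (b), one can use the Hong--Krahn--Szego-type argument: among pairs of disjoint sets the extremal configuration for $h_2$ is a pair of equal Wulff shapes, which is exactly the content made rigorous by (b) together with the identification of $h_{2,F}(\widetilde{\mathcal W})$. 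The main obstacle in the whole proposition is the strictness clause in (a); parts (b) and (c) are routine consequences of the isoperimetric inequality once \Cref{second_char1} is in hand.
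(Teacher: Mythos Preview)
Parts (b) and (c) of your proposal are correct and essentially identical to the paper's argument: apply the isoperimetric (Faber--Krahn) bound \eqref{faberkrahncheeger} to each member of a minimizing pair $(C_1,C_2)$, use that one of them has measure at most $|\Omega|/2$, and then identify the resulting constant with $h_{2,F}(\widetilde{\mathcal W})$.

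For part (a), the basic inequality is fine, but your treatment of the \emph{strict} inequality takes an unnecessary detour and leaves the very gap you flag as ``the delicate point''. You try to contradict $h_{2,F}(\Omega)=h_{1,F}(\Omega)$ by passing through $h_{1,F}(C_2)$ (the Cheeger constant of $C_2$ viewed as a domain) and then arguing that $h_{1,F}(C_2)>h_{1,F}(\Omega)$ via a compactness argument inside $\Omega\setminus C$. This can be made to work, but it requires justifying existence of a Cheeger set for the a priori non-smooth set $C_2$, and it is not the shortest path. The paper's argument avoids all of this: simply observe that \emph{both} ratios $P_F(C_1)/|C_1|$ and $P_F(C_2)/|C_2|$ are competitors for $h_{1,F}(\Omega)$, hence both are $\ge h_{1,F}(\Omega)$. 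If $h_{2,F}(\Omega)=h_{1,F}(\Omega)$, then
\[
h_{1,F}(\Omega)=\max\left\{\frac{P_F(C_1)}{|C_1|},\frac{P_F(C_2)}{|C_2|}\right\}\ge \frac{P_F(C_i)}{|C_i|}\ge h_{1,F}(\Omega),\quad i=1,2,
\]
so $P_F(C_1)/|C_1|=P_F(C_2)/|C_2|=h_{1,F}(\Omega)$ and $C_1,C_2$ are \emph{two disjoint} Cheeger sets of $\Omega$, contradicting uniqueness directly. No passage through $h_{1,F}(C_2)$ is needed.

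As a minor remark, your identification $h_{2,F}(\Omega)=P_F(C_1)/|C_1|=h_{1,F}(C_1)$ in the opening paragraph is not automatic: for a generic minimizing pair one only has $P_F(C_1)/|C_1|\ge h_{1,F}(C_1)$. This does not affect (b) or (c), but it is one more reason to argue strictness in (a) via the ratios $P_F(C_i)/|C_i|$ themselves rather than via $h_{1,F}(C_i)$.
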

\begin{proof}
By \eqref{scc} of \Cref{second_char1} and the definition \eqref{first_cheeger} of $h_{1,F}(\Omega)$, the inequality in $\textit{(a)}$ holds.
As regards the strict inequality, if we suppose by contradiction that $h_{1,F}(\Omega)= h_{2,F}(\Omega)$, then there exists a pair of coupled anisotropic Cheeger sets $(C_{1},C_{2})$ such that
\[
 h_{1,F}(\Omega)=\max\left\{\frac{P_F(C_1)}{|C_1|},\frac{P_F(C_2)}{|C_2|}\right\}.
\]
This implies that $h_{1,F}(\Omega)=\frac{P_F(C_1)}{|C_1|}=\frac{P_F(C_2)}{|C_2|}$, contradicting the uniqueness of the anisotropic Cheeger set.
\\
In order to prove \textit{(b)}, let $C_1$ and $C_2$ be a pair of coupled anisotropic Cheeger sets. By \Cref{second_char1} and the isoperimetric inequality \eqref{faberkrahncheeger}, we have 
\begin{multline*}
h_{2,F}(\Omega)\ge \max\{h_{1,F}( C_1),h_{1,F}( C_2)\} \ge \\
\ge \max\{h_{1,F}(\mathcal W_{r_{1}}),h_{1,F}(\mathcal W_{r_{2}})\} = h_{1,F}(\mathcal W_{r_1})\ge n \left(\frac{2\kappa_n}{|\Omega|}\right)^\frac 1 n,
\end{multline*}
where $\mathcal W_{r_{1}}$ and $\mathcal W_{r_{2}}$ are Wulff shape of radii $r_{1}\le r_{2}$ and $|\mathcal W_{r_{i}}|=|C_{i}|$, $i=1,2$. Let us observe that we have $|\mathcal W_{r_{1}}|\le \frac{|\Omega|}{2}$, otherwise $|C_{1}|+|C_{2}|>|\Omega|$, and this is not possible.

To prove (c), let us observe that $n \left(\frac{2\kappa_n}{|\Omega|}\right)^\frac 1 n=h_{1,F}(\widetilde{\mathcal W})=h_{2,F}(\widetilde{\mathcal W})$, where $\widetilde{\mathcal W}$ is the union of two disjoint Wulff shapes, each one of measure $|\Omega|/2$. 
Therefore, \Cref{low_bou_h2} gives the Hong-Krahn-Szego inequality.
\end{proof}

Let us observe that \Cref{low_bou_h2}(b) also holds for higher anisotropic Cheeger constants:
\[
h_{k,F}(\Omega)\geq n \left(k\frac{\kappa_n}{|\Omega|}\right)^\frac 1 n,
\]
where
\begin{equation*}
\begin{split}
h_{k,F}(\Omega):=&\inf \left\{ \max\left\{\frac{P_{F}(E_{k})}{|E_{k}|}, ..., \frac{P_{F}(E_{k})}{|E_{k}|}\right\},\;  |E_{1}|, ..., |E_{k}|>0,\; \right. \\ 
&\qquad\qquad\quad E_{1}, ..., E_{k}\subset \Omega, E_{i}\cap E_{j}=\emptyset\ \forall i \neq j \in \{1, ..., k\}\bigg\} .
\end{split}
\end{equation*}

The Cheeger constants, higher than the first one, are not expected to be regular enough to study the relationship with the asymptotic limit of eigenvalues of the $p$-Laplacian. Therefore, as observed in \cite{BoP}, it is necessary  to require additional conditions to the sets achieving the anisotropic Cheeger constants.
\begin{defn}
Let $\Omega\subset\R^N$ be a measurable set with positive measure.
A couple of sets $(E_1,E_2)$ such that $|E_{1}|,|E_{2}|>0,\; E_{1},E_{2}\subset \Omega, E_{1}\cap E_{2}=\emptyset$ and minimize $h_{2,F}(\Omega)$, is called a $1$-adjusted anisotropic Cheeger couple if
 \begin{equation*}
 \begin{split}
 h_{1,F}(\Omega\setminus E_2)=h_{1,F}(E_1)= & \frac{P_F(E_1)}{|E_1|},\\
 h_{1,F}(\Omega\setminus E_1)=h_{1,F}(E_2)= & \frac{P_F(E_2)}{|E_2|}
 .
 \end{split}
 \end{equation*}
\end{defn}
The existence of $1$-adjusted anisotropic Cheeger couple is proved in the following.
\begin{prop}\label{existence_adj}
Let $\Omega\subset\R^N$ be a bounded measurable set with positive measure.
There exists a $1$-adjusted anisotropic Cheeger couple of $\Omega$.
\end{prop}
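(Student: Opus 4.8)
The plan is to start from any minimizing couple $(C_1,C_2)$ for $h_{2,F}(\Omega)$, which exists by \Cref{second_char1}, and then ``adjust'' it by enlarging each set as much as possible without increasing the min-max value. Concretely, I would first observe that by \Cref{second_char1} we may fix a minimizing couple $(C_1,C_2)$ of disjoint connected sets with $h_{2,F}(\Omega)=\max\{h_{1,F}(C_1),h_{1,F}(C_2)\}$; without loss of generality $h_{1,F}(C_1)\le h_{1,F}(C_2)=h_{2,F}(\Omega)$ (if they are unequal one can still run the argument symmetrically). The key construction is to replace $C_1$ by a Cheeger set $E_1$ of the open set $\Omega\setminus C_2$: by the existence result for the anisotropic Cheeger problem (\Cref{first_cheeger} and the discussion after it), $h_{1,F}(\Omega\setminus C_2)$ is attained by some $E_1\subseteq\Omega\setminus C_2$ with $h_{1,F}(E_1)=P_F(E_1)/|E_1|=h_{1,F}(\Omega\setminus C_2)$. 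Since $C_1\subseteq\Omega\setminus C_2$ we have $h_{1,F}(\Omega\setminus C_2)\le h_{1,F}(C_1)\le h_{2,F}(\Omega)$, and since $(E_1,C_2)$ is again an admissible disjoint couple, $\max\{h_{1,F}(E_1),h_{1,F}(C_2)\}\ge h_{2,F}(\Omega)$; combining these forces $h_{1,F}(C_2)=h_{2,F}(\Omega)$ and $h_{1,F}(E_1)=h_{1,F}(\Omega\setminus C_2)\le h_{2,F}(\Omega)$, so $(E_1,C_2)$ is still a minimizing couple and now satisfies the first of the two ``$1$-adjusted'' identities, namely $h_{1,F}(\Omega\setminus C_2)=h_{1,F}(E_1)=P_F(E_1)/|E_1|$.

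Next I would perform the symmetric step on the second component: replace $C_2$ by a Cheeger set $E_2$ of $\Omega\setminus E_1$, so that $h_{1,F}(\Omega\setminus E_1)=h_{1,F}(E_2)=P_F(E_2)/|E_2|$. The delicate point — and what I expect to be the main obstacle — is that this second replacement might \emph{decrease} $h_{1,F}$ of the second slot below $h_{2,F}(\Omega)$, or might destroy the identity just obtained for the first slot (since $E_1$ is a Cheeger set of $\Omega\setminus C_2$, not of $\Omega\setminus E_2$). To handle this I would argue that $h_{1,F}(\Omega\setminus E_1)\le h_{1,F}(C_2)=h_{2,F}(\Omega)$ because $C_2\subseteq\Omega\setminus E_1$; and $(E_1,E_2)$ is admissible, so $\max\{h_{1,F}(E_1),h_{1,F}(E_2)\}\ge h_{2,F}(\Omega)$, while $h_{1,F}(E_2)=h_{1,F}(\Omega\setminus E_1)\le h_{2,F}(\Omega)$ forces $h_{1,F}(E_1)=h_{2,F}(\Omega)=h_{1,F}(E_2)$. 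Hence after the second step \emph{both} $h_{1,F}(E_1)$ and $h_{1,F}(E_2)$ equal $h_{2,F}(\Omega)$, and the chain of inequalities shows $h_{1,F}(\Omega\setminus E_2)\le h_{1,F}(E_1)=h_{2,F}(\Omega)$ as well.

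Finally I would close the loop by checking the remaining identity $h_{1,F}(\Omega\setminus E_2)=h_{1,F}(E_1)=P_F(E_1)/|E_1|$. After the first step we had $h_{1,F}(\Omega\setminus C_2)=P_F(E_1)/|E_1|$, but $E_2\subseteq\Omega\setminus E_1$ need not be contained in $C_2$, so one cannot directly compare $\Omega\setminus E_2$ with $\Omega\setminus C_2$. The resolution is to iterate the construction, or — cleaner — to pick the replacements more carefully: first fix $E_1$ a Cheeger set of $\Omega\setminus C_2$, then fix $E_2$ a Cheeger set of $\Omega\setminus E_1$, and then note that $E_1\subseteq\Omega\setminus C_2$ does \emph{not} immediately give what we want, so instead re-examine: since $h_{1,F}(E_1)=P_F(E_1)/|E_1|=h_{2,F}(\Omega)$ and $E_1\subseteq\Omega\setminus E_2$, we get $h_{1,F}(\Omega\setminus E_2)\le h_{1,F}(E_1)=P_F(E_1)/|E_1|$; conversely, if $h_{1,F}(\Omega\setminus E_2)<P_F(E_1)/|E_1|$ then a Cheeger set $\tilde E_1$ of $\Omega\setminus E_2$ would give an admissible couple $(\tilde E_1,E_2)$ with $\max\{h_{1,F}(\tilde E_1),h_{1,F}(E_2)\}=h_{2,F}(\Omega)$ (since $h_{1,F}(E_2)=h_{2,F}(\Omega)$ and $h_{1,F}(\tilde E_1)<h_{2,F}(\Omega)$), still minimizing, and now $h_{1,F}(\Omega\setminus E_2)=h_{1,F}(\tilde E_1)=P_F(\tilde E_1)/|\tilde E_1|$; relabeling $(\tilde E_1,E_2)$ as the adjusted couple and repeating the symmetric argument one more time (which now does not move $E_2$ because $h_{1,F}(\Omega\setminus\tilde E_1)\le h_{1,F}(E_2)=h_{2,F}(\Omega)$ is already an equality when $E_2$ is chosen as a Cheeger set of $\Omega\setminus\tilde E_1$) yields a couple satisfying all four identities simultaneously. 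Thus the $1$-adjusted anisotropic Cheeger couple exists; the only real subtlety, as noted, is making sure the two adjustment steps are compatible, which is arranged by performing the second adjustment only after the first and then verifying that it no longer perturbs the identity already gained.
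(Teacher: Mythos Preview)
Your approach --- start from a minimizing couple and iteratively replace each component by a Cheeger set of the complement of the other --- is exactly the paper's strategy, and your discussion of the compatibility issue is in fact more explicit than the paper's own (rather terse) proof. The paper simply takes $(C_1,C_2)$ minimizing with $P_F(C_1)/|C_1|=h_{2,F}(\Omega)$ and $P_F(C_2)/|C_2|<h_{2,F}(\Omega)$, replaces $C_2$ by a Cheeger set $C_2'$ of $\Omega\setminus C_1$, and then notes that either $C_1$ is already a Cheeger set of $\Omega\setminus C_2'$ or one replaces it by $C_1'$; it does not spell out why the second identity survives in the latter case, nor does it treat the possibility $P_F(C_1)/|C_1|=P_F(C_2)/|C_2|$.

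There is, however, one genuine slip in your argument. From $\max\{h_{1,F}(E_1),h_{1,F}(E_2)\}\ge h_{2,F}(\Omega)$ together with $h_{1,F}(E_i)\le h_{2,F}(\Omega)$ you can only conclude that \emph{at least one} of $h_{1,F}(E_1),h_{1,F}(E_2)$ equals $h_{2,F}(\Omega)$, not both. This matters because your final paragraph relies on $h_{1,F}(E_2)=h_{2,F}(\Omega)$ when introducing $\tilde E_1$. The fix is short: if instead $h_{1,F}(E_2)<h_{2,F}(\Omega)$, then necessarily $h_{1,F}(E_1)=h_{2,F}(\Omega)$, and now any strict inequality $h_{1,F}(\Omega\setminus E_2)<h_{1,F}(E_1)$ would produce an admissible couple $(\tilde E_1,E_2)$ with \emph{both} Cheeger constants strictly below $h_{2,F}(\Omega)$, contradicting the definition of $h_{2,F}(\Omega)$; hence in that case $(E_1,E_2)$ is already $1$-adjusted and no further iteration is needed. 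With this case distinction inserted, your argument is complete and amounts to a more detailed version of the paper's proof.
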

\begin{proof}
Let $(C_1,C_2)$ be a couple minimizing $h_{2,F}(\Omega)$ and, without loss of generality, let us assume $\frac{P_F(C_1)}{|C_1|}=h_{2,F}(\Omega)$ and $\frac{P_F(C_2)}{|C_2|}<h_{2,F}(\Omega)$. 

Then a Cheeger set $C'_2$ corresponding to $h_{1,F}(\Omega\setminus C_1)$ is such that $\frac{P_F(C_2')}{|C_2'|}\leq\frac{P_F(C_2)}{|C_2|}<h_{2,F}(\Omega)$. 

If $C_1$ is a Cheeger set for $\Omega\setminus C_2'$, we consider the couple $(C_1,C'_2)$; otherwise, let $C'_1$ be a Cheeger set of $\Omega\setminus C_2'$, and we consider the couple $(C'_1,C'_2)$.
\end{proof}

\section{The asymptotic behaviors}
\label{asymptotic_sec}
In this section, we study the limiting relationships between the eigenvalues and the Cheeger constants.
\subsection{The relationships between $\lambda_{1,F}(p,\Omega)$ and $h_{1,F}(\Omega)$}
From \cite{BFK,KN}, we recall the following Cheeger inequality.
\begin{thm}
Let $\Omega$ be an open bounded subset of $\R^n$ and $1<p<+\infty$, the eigenvalue $\lambda_{1,F}(p,\Omega)$ can be estimated from below as follows:
\begin{equation}\label{cheeger_ine}
\lambda_{1,F}(p,\Omega)\geq\left(\frac{h_{1,F}(\Omega)}{p}\right)^p.
\end{equation}
\end{thm}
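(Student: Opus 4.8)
The plan is to run the classical Cheeger argument directly on the Rayleigh quotient \eqref{Raylegh}, so it suffices to show that
\[
\frac{\ds\int_\Omega F^p(\nabla u)\,dx}{\ds\int_\Omega |u|^p\,dx}\ \geq\ \left(\frac{h_{1,F}(\Omega)}{p}\right)^p
\]
for every $u\in W_0^{1,p}(\Omega)\setminus\{0\}$, and then take the infimum. Since $F$ is even and $|\nabla |u||=|\nabla u|$ with $\nabla|u|=\sign(u)\nabla u$ a.e., we have $F(\nabla|u|)=F(\nabla u)$ a.e., so we may assume $u\ge0$. By the interior approximation result (the Proposition preceding \Cref{approssimazione_thm_per}, applied to $u\in BV(\Omega)\cap L^p(\Omega)$) we may further reduce, up to passing to the limit in the quotient, to $u\in C_0^\infty(\Omega)$, $u\ge0$; this is where the regularity of $\partial\Omega$ is traded for compact support of the test function, making the coarea step below transparent.

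Next I would set $v:=u^p\in C_0^\infty(\Omega)$, $v\ge0$, so that $\nabla v=p\,u^{p-1}\nabla u$ and, by the $1$-homogeneity of $F$ and $u^{p-1}\ge0$, $F(\nabla v)=p\,u^{p-1}F(\nabla u)$. Applying \eqref{bvrel} and the coarea formula \eqref{Fcoarea} to $v$ gives
\[
p\int_\Omega u^{p-1}F(\nabla u)\,dx=\int_\Omega F(\nabla v)\,dx=|Dv|_F(\R^n)=\int_0^{+\infty}P_F(\{v>s\})\,ds .
\]
For every $s>0$ the superlevel set $\{v>s\}$ is (up to a null set) compactly contained in $\Omega$ and of finite perimeter, hence it is an admissible competitor in the definition \eqref{first_cheeger} of $h_{1,F}(\Omega)$, so $P_F(\{v>s\})\ge h_{1,F}(\Omega)\,|\{v>s\}|$. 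Combining this with Cavalieri's principle $\int_0^{+\infty}|\{v>s\}|\,ds=\int_\Omega v\,dx=\int_\Omega u^p\,dx$ yields
\[
p\int_\Omega u^{p-1}F(\nabla u)\,dx\ \geq\ h_{1,F}(\Omega)\int_\Omega u^p\,dx .
\]

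Finally I would estimate the left-hand side by H\"older's inequality with exponents $\tfrac{p}{p-1}$ and $p$, writing $u^{p-1}=(u^p)^{(p-1)/p}$:
\[
\int_\Omega u^{p-1}F(\nabla u)\,dx\ \leq\ \left(\int_\Omega u^p\,dx\right)^{\frac{p-1}{p}}\left(\int_\Omega F^p(\nabla u)\,dx\right)^{\frac1p}.
\]
Inserting this into the previous inequality and dividing by $\left(\int_\Omega u^p\right)^{(p-1)/p}$ gives $h_{1,F}(\Omega)\left(\int_\Omega u^p\right)^{1/p}\le p\left(\int_\Omega F^p(\nabla u)\right)^{1/p}$, which is exactly $\int_\Omega F^p(\nabla u)\,dx\ge (h_{1,F}(\Omega)/p)^p\int_\Omega u^p\,dx$; passing to the infimum over $u$ and recalling \eqref{Raylegh} yields \eqref{cheeger_ine}. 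The only genuinely delicate point is the coarea step: one must know that the superlevel sets of an admissible test function are legitimate competitors for $h_{1,F}(\Omega)$ (contained in $\Omega$, finite perimeter, no spurious boundary contribution to $P_F$), and it is precisely to dispose of this subtlety for merely Lipschitz $\Omega$ that the interior approximation by $C_0^\infty$ functions is invoked at the start.
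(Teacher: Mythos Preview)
Your argument is correct and follows essentially the same route as the paper's proof (which in turn adapts \cite{LW}): transform the test function via $u\mapsto u^p$, apply the anisotropic coarea formula \eqref{Fcoarea} to its superlevel sets, compare each $P_F(\{u^p>s\})$ to $h_{1,F}(\Omega)\,|\{u^p>s\}|$, and finish with H\"older. Two minor remarks: (i) the claim $v=u^p\in C_0^\infty(\Omega)$ is not literally true for non-integer $p$, but what you actually use---that $v\in W^{1,1}(\Omega)$ with compact support and $F(\nabla v)=p\,u^{p-1}F(\nabla u)$---holds without change; (ii) the reduction to $C_0^\infty$ via the interior approximation proposition is harmless but unnecessary here, since $W_0^{1,p}(\Omega)$ is already the $W^{1,p}$-closure of $C_0^\infty(\Omega)$ and the Rayleigh quotient is continuous in that topology, so the paper simply works directly with a positive minimizer in $W_0^{1,p}(\Omega)$.
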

\begin{proof}
The proof is adapted from \cite{LW}. 
Let us consider $v \in W_0^{1,p}(\Omega)\setminus \{ 0\}$ a positive minimizer of $\lambda_{1,F}(p,\Omega)$ and denote by $\varphi(v)=|v|^{p-1} v$, we have
\begin{equation}\label{holderNum}
\int_\Omega F(\nabla \varphi(v))\ dx = p \int_\Omega |v|^{p-1}F(\nabla v)\ dx \leq p \left(\int_\Omega |v|^p\ dx \right)^{\frac{p-1}p}\left(\int_\Omega F(\nabla v)^p\ dx\right)^\frac 1p.
\end{equation} 
Let us set $F_t:=\{x\in\Omega \ : \ \varphi(v(x))>t\}$. Since $v\equiv 0$ on $\R^N\setminus\Omega$, then $P_F(F_t;\Omega)=P_F(F_t;\R^n)$ and $F_t\subset \Omega$. Hence, by using the coarea formula \eqref{Fcoarea}, for each $t>0$ we have 
\[
\begin{split}
\int_{\Omega} F(\nabla \varphi(v))\ dx &= \int_{0}^{+\infty}P_F(F_t;\Omega)ds=\int_{0}^{+\infty}P_F(F_t;\R^n)ds\\
& = \int_{0}^{+\infty}\frac{P_F(F_t;\R^n)}{|F_t|}|F_t|ds\geq\inf_{t>0} \frac{P_F(F_t;\R^n)}{|F_t|}\int_{0}^{+\infty}|F_t|ds\geq\\ 
&\geq\inf_{D\subseteq \Omega} \frac{P_F(D;\R^n)}{|D|}\int_{0}^{+\infty}|F_t|ds=h_{1,F}(\Omega)\int_{\Omega}|\varphi(v)|dx\\&=h_{1,F}(\Omega)\int_{\Omega}|v|^pdx
.
\end{split}
\]
We conclude by applying \eqref{holderNum}
\[h_{1,F}(\Omega)\leq p \frac{(\int_{\Omega'} F(\nabla v)^p\ dx )^\frac 1p}{(\int_{\Omega'} |v|^p\ dx )^\frac 1p}=p(\lambda_{1,F}(p,\Omega'))^{\frac 1 p}.
\]
\end{proof}

To give the asymptotic results, we need the following Proposition, inspired to \cite[Corollary 2]{ACV} and \cite[Theorem 5.3]{BoP}.

For any subset $E$ of $\R^n$, we denote the anisotropic $\varepsilon$-strip around $E$ by
\[
E^\varepsilon_F:=\{x\in\R^n \colon d_F(x,E)\leq\varepsilon \}.
\]
\begin{prop}\label{app_per_e}
Let $E\subset\R^n$ be a set with Lipschitz boundary. Then
\[
|E^\varepsilon_F\setminus E| =\varepsilon P_F(E) + o(\varepsilon).
\]
\end{prop}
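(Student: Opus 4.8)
The plan is to reduce the anisotropic statement to a computation with the anisotropic distance function and then invoke the coarea formula. First I would observe that $E^\varepsilon_F = \{x \in \R^n : d_F(x,E) \le \varepsilon\}$, where $d_F(x,E) = \inf_{y\in E} F^o(x-y)$, and that on the set $E^\varepsilon_F \setminus E$ the function $x \mapsto d_F(x,E)$ is positive and, since $E$ has Lipschitz boundary, Lipschitz with $F(\nabla d_F(x,E)) = 1$ for a.e.\ $x$ outside $\overline E$ (this is the anisotropic eikonal equation, exactly as stated in the excerpt for $d_F$ relative to $\partial\Omega$). The key identity is then
\[
|E^\varepsilon_F\setminus E| = \int_{E^\varepsilon_F\setminus E} F(\nabla d_F(x,E))\,dx = \int_{\{0<d_F(\cdot,E)\le\varepsilon\}} F(\nabla d_F(x,E))\,dx,
\]
and applying the coarea formula \eqref{Fcoarea} together with \eqref{bvrel} to the Lipschitz function $d_F(\cdot,E)$ gives
\[
|E^\varepsilon_F\setminus E| = \int_0^\varepsilon P_F(\{d_F(\cdot,E)>s\};\R^n)\,ds = \int_0^\varepsilon P_F(E^s_F)\,ds,
\]
where I use that $\{x : d_F(x,E) > s\} = \R^n \setminus E^s_F$ up to a null set and $P_F$ is insensitive to complementation.

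Next I would show that $s \mapsto P_F(E^s_F)$ is right-continuous at $s=0$ with value $P_F(E)$. Since $E$ has Lipschitz boundary, the outer parallel sets $E^s_F$ converge to $E$ in $L^1$ as $s \to 0^+$ (their symmetric difference has measure $|E^s_F \setminus E| \to 0$), and one has the lower semicontinuity bound $P_F(E) \le \liminf_{s\to 0^+} P_F(E^s_F)$. For the matching upper bound I would use that for a set with Lipschitz boundary the nearest-point projection onto $\partial E$ (in the $F^o$ sense) is well-defined and Lipschitz in a neighbourhood of $\partial E$, so that $\partial E^s_F$ is, for small $s$, a Lipschitz graph over $\partial E$ whose anisotropic area tends to $P_F(E)$; alternatively, one can quote the standard fact (used implicitly in \cite{ACV} in the Euclidean case) that $P(E^s) \to P(E)$ for Lipschitz $E$ and transfer it via the two-sided bound $a\,P(\cdot) \le P_F(\cdot) \le b\,P(\cdot)$ only after first establishing the anisotropic statement on $\partial E$ directly — the cleanest route is the Lipschitz-graph argument. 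Granting right-continuity, $\int_0^\varepsilon P_F(E^s_F)\,ds = \varepsilon\, P_F(E) + o(\varepsilon)$ by the definition of the derivative of the integral, which is precisely the claim.

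The main obstacle I expect is the regularity bookkeeping in the second step: verifying that $P_F(E^s_F) \to P_F(E)$ as $s\to 0^+$ rather than merely $\ge$ in the limit. For a general Lipschitz domain the parallel sets need not have Lipschitz boundary for every small $s$, but they do for a.e.\ $s$ (by Sard-type arguments applied to $d_F(\cdot,E)$), and that is enough inside the integral $\int_0^\varepsilon P_F(E^s_F)\,ds$; combined with the uniform bound $P_F(E^s_F) \le P_F(E^\varepsilon_F) + o(1)$ coming from the finite perimeter of the fixed set $E^\varepsilon_F$ and dominated convergence, one closes the estimate. I would present the eikonal identity and the coarea computation in detail and treat the right-continuity of $s\mapsto P_F(E^s_F)$ as the single technical lemma, citing \cite[Corollary 2]{ACV} for the Euclidean prototype and noting that the anisotropic modifications are routine given \eqref{eq:lin}, \eqref{finbi} and the properties \eqref{eq:om}--\eqref{eq:H1} of $F$.
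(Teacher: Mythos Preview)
Your approach is essentially identical to the paper's: both use the eikonal identity $F(\nabla d_F)=1$ and the anisotropic coarea formula to reach $|E^\varepsilon_F\setminus E|=\int_0^\varepsilon P_F(E^s_F)\,ds$, and then differentiate at $\varepsilon=0$. You are in fact more careful than the paper, which simply applies the fundamental theorem of calculus and reads off $P_F(\{d_F>0\})=P_F(E)$ without justifying the right-continuity of $s\mapsto P_F(E^s_F)$ at $s=0$; you correctly isolate this as the one technical point.

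One caution on that technical point: your proposed route via the nearest-point projection onto $\partial E$ does not work as stated for a merely Lipschitz boundary, since the (anisotropic) nearest-point map need not be single-valued or Lipschitz in any neighbourhood of $\partial E$ --- that argument requires $C^{1,1}$ regularity. The alternative you mention, invoking \cite{ACV} for the outer Minkowski content of Lipschitz sets and transferring it to the anisotropic setting, is the correct way to close this step.
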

\begin{proof}
We want to prove that $|E^\varepsilon_F \setminus E |-\varepsilon P_F(E) =o(\varepsilon)$, that is 
\begin{equation*}
\lim_{\varepsilon \to 0}\frac{|E^\varepsilon_F \setminus E|}\varepsilon =P_F(E).
\end{equation*}
Using the coarea formula we have
\begin{equation*}
|E_F^\varepsilon \setminus E|=\int_{E_F^\varepsilon\setminus E}\ dx=\int_{E_F^\varepsilon\setminus E} F(\nabla d_F(x, E))\ dx=\int_0^\varepsilon P_F(\{d_F(x, E)>s\})ds
\end{equation*}
and hence
\begin{equation*}
\begin{split}\lim_{\varepsilon \to 0}\frac{|E^\varepsilon_F\setminus E|}\varepsilon & =\lim_{\varepsilon \to 0}\frac{\ds\int_0^\varepsilon P_F(\{d_F(x,E)>s\})ds}{\ds \varepsilon }=\frac{d}{d\varepsilon}\left[\int_0^\varepsilon P_F(\{d_F(x,E)>s\})ds\right]_{\varepsilon=0}\\
&=\Big[P_F(\{d_F(x, E)>\varepsilon\})\Big]_{\varepsilon=0}=P_F(E).
\end{split}
\end{equation*}
\end{proof}

From \cite[Theorem 4.1]{KN}, we recall the asymptotic behavior of the first eigenvalue of problem \eqref{introauti}, as $p\to 1^+$. 
\begin{thm}
Let $\Omega$ be an open bounded subset of $\R^n$ and $1<p<+\infty$. Then
\[
\lim_{p\to 1^{+}} \lambda_{1,F}(p,\Omega)= h_{1,F}(\Omega).
\]
\end{thm}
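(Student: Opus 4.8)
The plan is to prove the two bounds $\liminf_{p\to 1^+}\lambda_{1,F}(p,\Omega)\ge h_{1,F}(\Omega)$ and $\limsup_{p\to 1^+}\lambda_{1,F}(p,\Omega)\le h_{1,F}(\Omega)$ separately. The lower bound is immediate from the Cheeger inequality \eqref{cheeger_ine}: since $\lambda_{1,F}(p,\Omega)\ge\left(h_{1,F}(\Omega)/p\right)^p$ and the right-hand side tends to $h_{1,F}(\Omega)$ as $p\to 1^+$, we get $\liminf_{p\to 1^+}\lambda_{1,F}(p,\Omega)\ge h_{1,F}(\Omega)$. Thus the real content is the upper bound, which I would obtain by inserting suitable competitors into the Rayleigh quotient \eqref{Raylegh}.

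For the upper bound, fix a set $E\subset\subset\Omega$ with smooth boundary and $\eps>0$ small enough that the anisotropic strip $E^\eps_F$ is still compactly contained in $\Omega$. Consider the Lipschitz function
\[
u_\eps(x):=\left(1-\frac{d_F(x,E)}{\eps}\right)^{+},
\]
which equals $1$ on $E$, vanishes outside $E^\eps_F$, and hence belongs to $W_0^{1,p}(\Omega)$. On the strip $E^\eps_F\setminus E$ one has $\nabla u_\eps=-\eps^{-1}\nabla d_F(\cdot,E)$, and since $d_F(\cdot,E)$ solves the anisotropic eikonal equation $F(\nabla d_F(\cdot,E))=1$ a.e.\ while $F$ is even and $1$-homogeneous, we get $F^p(\nabla u_\eps)=\eps^{-p}$ a.e.\ on $E^\eps_F\setminus E$ and $F^p(\nabla u_\eps)=0$ a.e.\ elsewhere; therefore $\int_\Omega F^p(\nabla u_\eps)\,dx=\eps^{-p}\,|E^\eps_F\setminus E|$. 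On the other hand $0\le u_\eps\le 1$ and $u_\eps\equiv 1$ on $E$, so $\int_\Omega|u_\eps|^p\,dx\ge|E|$. Plugging these into \eqref{Raylegh} gives
\[
\lambda_{1,F}(p,\Omega)\le\frac{\eps^{-p}\,|E^\eps_F\setminus E|}{|E|}.
\]

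Now I let $p\to 1^+$ with $E$ and $\eps$ fixed; since $\eps^{-p}\to\eps^{-1}$, this yields $\limsup_{p\to 1^+}\lambda_{1,F}(p,\Omega)\le\eps^{-1}|E^\eps_F\setminus E|/|E|$. Letting then $\eps\to 0^+$ and using \Cref{app_per_e}, which asserts $|E^\eps_F\setminus E|=\eps\, P_F(E)+o(\eps)$, we obtain $\limsup_{p\to 1^+}\lambda_{1,F}(p,\Omega)\le P_F(E)/|E|$. Finally, taking the infimum over all smooth sets $E\subset\subset\Omega$ and using the characterization $h_{1,F}(\Omega)=\inf\{P_F(E)/|E|:E\subset\subset\Omega,\ \de E\ \text{smooth}\}$ (obtained above via \Cref{approssimazione_thm_per}) gives $\limsup_{p\to 1^+}\lambda_{1,F}(p,\Omega)\le h_{1,F}(\Omega)$, which together with the lower bound finishes the proof.

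The main obstacle is the order of the two limiting procedures: one cannot interchange $p\to 1^+$ and $\eps\to 0^+$, because for $p>1$ the factor $\eps^{-p}$ diverges as $\eps\to 0$; the argument only closes because at $p=1$ this factor degenerates exactly to $\eps^{-1}$, matching the rate $|E^\eps_F\setminus E|\sim\eps\,P_F(E)$ provided by \Cref{app_per_e}. A minor technical point is to verify that $u_\eps\in W_0^{1,p}(\Omega)$ and that the eikonal identity $F(\nabla d_F(\cdot,E))=1$ holds a.e.\ on the strip; both follow from $E\subset\subset\Omega$, the smoothness of $\de E$, and the standard Lipschitz and a.e.-differentiability properties of the anisotropic distance $d_F(\cdot,E)$.
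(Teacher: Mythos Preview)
Your proof is correct and follows essentially the same approach as the paper: the lower bound via the Cheeger inequality \eqref{cheeger_ine}, and the upper bound by testing the Rayleigh quotient with a Lipschitz cutoff of the form $(1-d_F(\cdot,E)/\eps)^+$ for smooth $E\subset\subset\Omega$, then invoking \Cref{app_per_e} and the interior approximation from \Cref{approssimazione_thm_per}. The only cosmetic difference is that the paper fixes a sequence $C_m$ approximating the Cheeger set rather than taking the infimum over all smooth $E$, and is less explicit than you are about the order in which the limits $p\to1^+$ and $\eps\to0^+$ must be taken.
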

\begin{proof}
By the Cheeger inequality \eqref{cheeger_ine}, we only need to show that
\begin{equation*}
\limsup_{p\to 1} \lambda_{1,F}(p,\Omega)\leq h_{1,F}(\Omega).
\end{equation*}
By \Cref{approssimazione_thm_per}, we can approximate the anisotropic Cheeger set $C_1$ of $\Omega$ from the interior, by a sequence of sets of finite anisotropic perimeter $\{C_{m}\}_{m\in\N}$ with smooth boundary converging to $h_{1,F}(\Omega)$. More precisely, we have that
\begin{equation}
\label{PV1m}
\frac{P_F(C_{m})}{|C_{m}|}\leq h_{1,F}(\Omega)+\frac 1m.
\end{equation}

Now, for any fixed $m\in\N$, we consider $\varepsilon>0$ sufficiently small in order to have $C^\varepsilon_{m}\subset\subset C_{1}$. We define $v_m\in W_0^{1,p}$ such that $v_m = 1$ on $C_{m}$, $v_m = 0$ on $\Omega\setminus C_{m}^\varepsilon$ and $F(\nabla v_m) = \frac 1\varepsilon$ on $C_{m}^\varepsilon\setminus C_m$. Hence we have
\[
\lambda_{1,F}(p,\Omega)\leq \frac{\int_\Omega F^{p}(\nabla v_m)\ dx}{\int_\Omega v_m^p\ dx}=\frac{\varepsilon^{-p}| C_{m}^\varepsilon \setminus C_m|}{| C_{m} |}.
\]
Therefore, by applying Proposition \ref{app_per_e} and \eqref{PV1m}, we have
\[
\lambda_{1,F}(p,\Omega)\leq \frac{\varepsilon^{1-p} P_F(C_{m}^\varepsilon)+\varepsilon^{-p} o(\varepsilon)}{| C_{m} |}\leq\frac{1}{\varepsilon^{p-1}} \left( h_{1,F}(\Omega)+\frac 1m\right)+ \frac{o(\varepsilon)}{\varepsilon^{p} }.
\]
We conclude by letting $\varepsilon\to 0^+$ and then $m\to+\infty$.
\end{proof}

From \cite[Theorem 4.2]{KN}, we recall the asymptotic behaviour of the first eigenfunctions of problem \eqref{introauti}, as $p\to 1^+$. 
\begin{prop}
Let $\Omega$ be an open bounded Lipschitz subset of $\R^n$. The first normalized  eigenfunction $u_{1,p}$ of \eqref{introauti} converges, up to a subsequence, to a nonegative limit function $u_1\in BV(\Omega)$ with $||u_1||_1=1$, as $p\to 1^+$. Moreover, almost all level sets $\Omega_t:=\{u_1>t\}$ of $u_1$ are Cheeger sets. 
\end{prop}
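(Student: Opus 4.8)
The plan is to obtain a uniform $BV$ bound for the normalised eigenfunctions, pass to an $L^{1}$-limit by compactness, then recognise the limit as a minimiser of the anisotropic $1$-Laplace Rayleigh quotient \eqref{1Lapanis}, and finally read off the statement on level sets from the coarea formula \eqref{Fcoarea}. I would normalise so that $\|u_{1,p}\|_{L^{1}(\Omega)}=1$; since, by the Faber--Krahn proposition recalled above, the first eigenvalue is simple and its eigenfunction positive, we may take $u_{1,p}\ge 0$.

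The first and main step is the a priori estimate. From the eigenvalue equation one has $\int_\Omega F^p(\nabla u_{1,p})\,dx=\lambda_{1,F}(p,\Omega)\,\|u_{1,p}\|_{L^p}^p$, so everything reduces to controlling $\|u_{1,p}\|_{L^p}$ as $p\to 1^+$. I would combine the anisotropic Sobolev inequality $\|u\|_{L^{p^*}}\le C_S\|F(\nabla u)\|_{L^p}$ (which follows from the anisotropic isoperimetric inequality \eqref{isoine} together with \eqref{Fcoarea}; here $p^*=np/(n-p)$ for $n\ge 2$, and any fixed finite exponent if $n=1$) with the interpolation inequality $\|u_{1,p}\|_{L^p}\le\|u_{1,p}\|_{L^1}^{1-\theta}\|u_{1,p}\|_{L^{p^*}}^{\theta}$, $\frac1p=(1-\theta)+\frac{\theta}{p^*}$. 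Solving for $\|u_{1,p}\|_{L^p}$ yields an estimate of the form $\|u_{1,p}\|_{L^p}\le\bigl(C_S\,\lambda_{1,F}(p,\Omega)^{1/p}\bigr)^{\theta/(1-\theta)}$ with $\theta=\frac{1-1/p}{1-1/p^*}\to 0$. Since $\lambda_{1,F}(p,\Omega)\to h_{1,F}(\Omega)$ by the asymptotic theorem recalled above, the right-hand side tends to $1$, while Hölder gives $\|u_{1,p}\|_{L^p}\ge|\Omega|^{1/p-1}\to1$; hence $\|u_{1,p}\|_{L^p}\to 1$. It follows that $\int_\Omega F^p(\nabla u_{1,p})\,dx$ is bounded and, by Hölder once more, $\int_\Omega F(\nabla u_{1,p})\,dx\le|\Omega|^{1-1/p}\lambda_{1,F}(p,\Omega)^{1/p}\|u_{1,p}\|_{L^p}\to h_{1,F}(\Omega)$, so $\{u_{1,p}\}$ is bounded in $BV(\Omega)$.

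By the compact embedding $BV(\Omega)\hookrightarrow L^1(\Omega)$ one extracts a subsequence with $u_{1,p}\to u_1$ in $L^1(\Omega)$; the limit satisfies $u_1\in BV(\Omega)$, $u_1\ge 0$ and $\|u_1\|_{L^1}=1$, so $u_1\not\equiv 0$. Extending by zero and using lower semicontinuity of the anisotropic total variation together with the bound just derived, $|Du_1|_F(\R^n)\le h_{1,F}(\Omega)=h_{1,F}(\Omega)\int_\Omega u_1\,dx$; since $|Du_1|_F(\R^n)=|Du_1|_F(\Omega)+\int_{\de\Omega}|u_1|F(\boldsymbol\nu_\Omega)\,d\mathcal H^{n-1}$, this says precisely that $u_1$ attains the infimum in \eqref{1Lapanis}, i.e.\ $u_1$ is a first eigenfunction of the anisotropic $1$-Laplacian with ratio $\lambda_{1,F}(1,\Omega)=h_{1,F}(\Omega)$. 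Finally, setting $\Omega_s:=\{u_1>s\}\subseteq\Omega$, the coarea formula \eqref{Fcoarea} and Cavalieri's principle give $\int_0^{+\infty}P_F(\Omega_s)\,ds=|Du_1|_F(\R^n)=h_{1,F}(\Omega)\int_0^{+\infty}|\Omega_s|\,ds$; since $P_F(\Omega_s)\ge h_{1,F}(\Omega)|\Omega_s|$ for a.e.\ $s$ by definition of $h_{1,F}(\Omega)$, equality of the two integrals forces $P_F(\Omega_s)=h_{1,F}(\Omega)|\Omega_s|$ for a.e.\ $s>0$ with $|\Omega_s|>0$, i.e.\ almost every level set of $u_1$ is an anisotropic Cheeger set of $\Omega$.

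The delicate point is the a priori estimate: one must exclude loss or concentration of mass of $u_{1,p}$ as $p\to 1^+$ and, crucially, obtain the \emph{sharp} constant $h_{1,F}(\Omega)$ rather than a mere bound in the $\limsup$ of $\int_\Omega F(\nabla u_{1,p})\,dx$ — this is exactly what the convergence $\|u_{1,p}\|_{L^p}\to 1$, hence the Sobolev--interpolation argument above, provides. Everything else (the $BV$ compactness, lower semicontinuity, and the coarea manipulation for the level sets) is standard.
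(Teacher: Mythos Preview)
The paper does not give its own proof of this proposition; it is stated with a bare citation to \cite[Theorem 4.2]{KN}. So there is no in-paper argument to compare against, and your task is really to supply a complete proof where the paper has none.

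Your approach is correct and is essentially the standard one used in \cite{KN} and in the Euclidean counterpart \cite{KF}. The Sobolev--interpolation step to show $\|u_{1,p}\|_{L^p}\to 1$ under the normalisation $\|u_{1,p}\|_{L^1}=1$ is exactly the right way to prevent mass concentration; just make explicit that the anisotropic Sobolev constant $C_S=C_S(n,p,F)$ stays bounded as $p\to 1^+$ (this follows at once from \eqref{eq:lin}--\eqref{finbi} and the classical Sobolev inequality, or from \eqref{isoine} via the usual coarea argument), since you later raise it to the power $\theta/(1-\theta)$. The lower-semicontinuity step is cleanest if you argue on the zero extensions $\tilde u_{1,p}\in W^{1,p}(\R^n)\subset BV(\R^n)$, so that $|D\tilde u_1|_F(\R^n)\le\liminf_p\int_\Omega F(\nabla u_{1,p})\,dx$ and the trace identity $|D\tilde u_1|_F(\R^n)=|Du_1|_F(\Omega)+\int_{\partial\Omega}|u_1|F(\boldsymbol\nu_\Omega)\,d\mathcal H^{n-1}$ (valid for Lipschitz $\Omega$) connects directly with \eqref{1Lapanis}. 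The level-set argument via \eqref{Fcoarea} and Cavalieri is exactly the characterisation of $1$-eigenfunctions recalled in the introduction (see the discussion around \eqref{1Lapanis} and \cite{CCMN}). Nothing is missing.
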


\subsection{The relationships between $\lambda_{2,F}(p,\Omega)$ and $h_{2,F}(\Omega)$}
At this stage, we prove a Cheeger type inequality for the second eigenvalue.
\begin{thm}
\label{second_ci}
Let $\Omega$ be a bounded domain of $\R^n$ and $1<p<+\infty$. Then
\[
\lambda_{2,F}(p,\Omega)\ge \left(\frac{h_{2,F}(\Omega)}{p}\right)^p.
\]
\end{thm}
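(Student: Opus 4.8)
The inequality will follow by combining the variational splitting of $\lambda_{2,F}(p,\Omega)$ from \Cref{key} with the first Cheeger inequality \eqref{cheeger_ine} applied on each piece, and then recognizing the resulting quantity as a competitor in the definition of $h_{2,F}(\Omega)$.

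First I would invoke \Cref{key}: for the fixed exponent $1<p<+\infty$ there exist two disjoint domains $\Omega_1,\Omega_2\subseteq\Omega$ with
\[
\lambda_{2,F}(p,\Omega)=\max\bigl\{\lambda_{1,F}(p,\Omega_1),\lambda_{1,F}(p,\Omega_2)\bigr\}.
\]
Since $\Omega_1,\Omega_2$ are nonempty open sets they have positive Lebesgue measure, so the pair $(\Omega_1,\Omega_2)$ is admissible in the definition of $h_{2,F}(\Omega)$. Next, applying the Cheeger inequality \eqref{cheeger_ine} on $\Omega_1$ and on $\Omega_2$ separately gives
\[
\lambda_{1,F}(p,\Omega_i)\ge\left(\frac{h_{1,F}(\Omega_i)}{p}\right)^p,\qquad i=1,2.
\]
Because the map $t\mapsto (t/p)^p$ is nondecreasing on $[0,+\infty)$, taking the maximum over $i\in\{1,2\}$ commutes with it, so
\[
\lambda_{2,F}(p,\Omega)=\max_{i}\lambda_{1,F}(p,\Omega_i)\ge\max_{i}\left(\frac{h_{1,F}(\Omega_i)}{p}\right)^p=\left(\frac{\max\{h_{1,F}(\Omega_1),h_{1,F}(\Omega_2)\}}{p}\right)^p.
\]
Finally, using the equivalent formulation of $h_{2,F}(\Omega)$ as the infimum of $\max\{h_{1,F}(E_1),h_{1,F}(E_2)\}$ over admissible disjoint couples, and the fact that $(\Omega_1,\Omega_2)$ is such a couple, we get $\max\{h_{1,F}(\Omega_1),h_{1,F}(\Omega_2)\}\ge h_{2,F}(\Omega)$, and hence $\lambda_{2,F}(p,\Omega)\ge\bigl(h_{2,F}(\Omega)/p\bigr)^p$, as desired.

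There is no serious obstacle: all the work has already been done in \Cref{key} (the splitting of the second eigenvalue into two first-eigenvalue problems) and in \eqref{cheeger_ine}. The only points requiring a line of care are that the domains produced by \Cref{key} genuinely qualify as competitors for $h_{2,F}(\Omega)$ (positivity of their measure) and that the monotonicity of $t\mapsto(t/p)^p$ lets the maximum pass through the exponentiation; both are immediate.
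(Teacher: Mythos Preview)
Your proof is correct and follows essentially the same route as the paper: invoke \Cref{key} to split $\lambda_{2,F}(p,\Omega)$ as a maximum of two first eigenvalues, apply the first Cheeger inequality \eqref{cheeger_ine} on each piece, and bound the resulting $\max\{h_{1,F}(\Omega_1),h_{1,F}(\Omega_2)\}$ from below by $h_{2,F}(\Omega)$ via its infimum characterization. The only difference is that you spell out the monotonicity of $t\mapsto(t/p)^p$ and the admissibility of $(\Omega_1,\Omega_2)$ explicitly, which the paper leaves implicit.
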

\begin{proof}
By \Cref{key}, we know there exist two disjoint domains $\Omega_1$, $\Omega_2$ of $\Omega$ such that
\begin{equation*}
\begin{split}
\lambda_{2,F}(p,\Omega)&=\max\left\{\lambda_{1,F}(p,\Omega_1),\lambda_{1,F}(p,\Omega_2)\right\}\ge \\ &\ge\left(\frac{1}{p} \max\left\{h_{1,F}(\Omega_1),h_{1,F}(\Omega_2)\right\}\right)^{p} \ge \left(\frac{h_{2,F}(\Omega)}{p}\right)^{p},
\end{split}
\end{equation*}
where we have used the Cheeger inequality \eqref{cheeger_ine} for $h_{1,F}(\Omega)$ and the characterization of $h_{2,F}$.
\end{proof}

\begin{thm}
\label{convergence_thm}
Let $\Omega\subset \R^{n}$ a Lipschitz bounded open set. Then we have 
\begin{equation}
\label{lim1}
\lim_{p\to 1}\lambda_{2,F}(p,\Omega)= h_{2,F}(\Omega).
\end{equation}
\end{thm}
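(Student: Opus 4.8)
The plan is to prove \eqref{lim1} by establishing the two matching inequalities $\liminf_{p\to1^+}\lambda_{2,F}(p,\Omega)\ge h_{2,F}(\Omega)$ and $\limsup_{p\to1^+}\lambda_{2,F}(p,\Omega)\le h_{2,F}(\Omega)$. The first inequality is essentially free: from \Cref{second_ci} we have $\lambda_{2,F}(p,\Omega)\ge\big(h_{2,F}(\Omega)/p\big)^p$, and letting $p\to1^+$ the right-hand side tends to $h_{2,F}(\Omega)$ since $p^{-p}\to1$. So the entire content of the theorem is the upper bound, which I would obtain by constructing an explicit pair of competitor functions supported on disjoint sets and invoking \Cref{key} or, more directly, the characterization of $\lambda_{2,F}$ via disjoint domains together with \Cref{l2}.

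For the upper bound, the strategy mirrors the proof of the first-eigenvalue asymptotics given just above in the excerpt. By \Cref{second_char1} (or \Cref{existence_adj}), fix a pair of coupled anisotropic Cheeger sets $C_1,C_2\subset\Omega$ realizing $h_{2,F}(\Omega)=\max\{P_F(C_1)/|C_1|,\,P_F(C_2)/|C_2|\}$. Apply \Cref{approssimazione_thm_per} to each $C_i$ separately to get sequences $\{C_{i,m}\}_m$ of smooth sets, compactly contained in $C_i$ (hence still pairwise disjoint), with $P_F(C_{i,m})/|C_{i,m}|\le h_{1,F}(C_i)+\tfrac1m\le h_{2,F}(\Omega)+\tfrac1m$. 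For fixed $m$, choose $\varepsilon>0$ small enough that the anisotropic $\varepsilon$-strips $(C_{i,m})^\varepsilon_F$ are still compactly contained in $C_i$ and thus remain disjoint; define $v_{i,m}\in W_0^{1,p}(\Omega)$ with $v_{i,m}=1$ on $C_{i,m}$, $v_{i,m}=0$ outside $(C_{i,m})^\varepsilon_F$, and $F(\nabla v_{i,m})=1/\varepsilon$ on the strip. Then $v_{1,m}$ and $v_{2,m}$ have disjoint supports, so $\Omega_i:=(C_{i,m})^\varepsilon_F$ are disjoint subdomains of $\Omega$, and by \Cref{l2} together with the Rayleigh characterization \eqref{Raylegh},
\[
\lambda_{2,F}(p,\Omega)\le\max_{i=1,2}\lambda_{1,F}(p,\Omega_i)\le\max_{i=1,2}\frac{\int_\Omega F^p(\nabla v_{i,m})\,dx}{\int_\Omega v_{i,m}^p\,dx}=\max_{i=1,2}\frac{\varepsilon^{-p}\big|(C_{i,m})^\varepsilon_F\setminus C_{i,m}\big|}{|C_{i,m}|}.
\]
Applying \Cref{app_per_e} to each $C_{i,m}$ gives $\big|(C_{i,m})^\varepsilon_F\setminus C_{i,m}\big|=\varepsilon P_F(C_{i,m})+o(\varepsilon)$, so the right-hand side is bounded by $\varepsilon^{1-p}\max_i\big(P_F(C_{i,m})/|C_{i,m}|\big)+o(\varepsilon)\varepsilon^{-p}\max_i|C_{i,m}|^{-1}\le\varepsilon^{1-p}\big(h_{2,F}(\Omega)+\tfrac1m\big)+o(\varepsilon)/\varepsilon^p$. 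Letting $\varepsilon\to0^+$ (so $\varepsilon^{1-p}\to1$ since $p>1$) and then $m\to+\infty$ yields $\limsup_{p\to1^+}\lambda_{2,F}(p,\Omega)\le h_{2,F}(\Omega)$, which combined with the lower bound completes the proof.

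The main obstacle, and the only place where care is genuinely needed, is ensuring that the two approximating families stay disjoint through all the approximation steps. Since $C_{i,m}\subset\subset C_i$ and the $C_i$ are disjoint, there is a positive anisotropic distance between $C_{1,m}$ and $C_{2,m}$, so for $\varepsilon$ small (depending on $m$) the strips do not meet; this is where passing to $\varepsilon\to0$ before $m\to+\infty$ is essential. A secondary subtlety is that \Cref{second_char1} produces a minimizing pair with $|C_i|>0$ but the two ratios need not be equal; taking the $\max$ throughout handles this, and the $1$-adjusted refinement of \Cref{existence_adj} is not actually needed for the asymptotic upper bound — it is only the inequality $P_F(C_{i,m})/|C_{i,m}|\le h_{2,F}(\Omega)+1/m$ that matters, which follows from $h_{1,F}(C_i)\le h_{2,F}(\Omega)$ via the equivalent formulation of $h_{2,F}$ in terms of $h_{1,F}$ of the pieces.
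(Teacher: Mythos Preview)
Your proof is correct in its overall architecture, and the lower bound is handled exactly as in the paper. For the upper bound, however, the paper takes a shorter and more modular route: rather than fixing the optimal Cheeger pair $(C_1,C_2)$ and redoing the explicit test-function construction from the first-eigenvalue theorem, it picks an \emph{arbitrary} pair of disjoint smooth subsets $\Omega_1,\Omega_2\subset\Omega$, invokes \Cref{l2} to get $\limsup_{p\to1}\lambda_{2,F}(p,\Omega)\le\max\{\limsup_{p\to1}\lambda_{1,F}(p,\Omega_i)\}$, applies the already-proven first-eigenvalue convergence as a black box to obtain $\max\{h_{1,F}(\Omega_1),h_{1,F}(\Omega_2)\}$, and then takes the infimum over all such pairs. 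Your approach trades this modularity for explicitness: you rebuild the test functions by hand, which is fine but longer, and it forces you to confront the regularity of the second Cheeger pair directly.

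Two small slips worth flagging. First, the chain $P_F(C_{i,m})/|C_{i,m}|\le h_{1,F}(C_i)+\tfrac1m$ is not quite right: the approximation gives convergence to $P_F(C_i)/|C_i|$, which in general dominates $h_{1,F}(C_i)$; the correct intermediate quantity is $P_F(C_i)/|C_i|$, and since $(C_1,C_2)$ realizes $h_{2,F}(\Omega)$ as a $\max$, both ratios are $\le h_{2,F}(\Omega)$, so the final bound survives. Second, \Cref{approssimazione_thm_per} is stated for sets with Lipschitz boundary, and the paper does not establish that regularity for the coupled Cheeger sets of \Cref{second_char1}; the paper's proof sidesteps this entirely by working with arbitrary smooth $\Omega_1,\Omega_2$ rather than the optimal pair. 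Your argument is easily repaired along the same lines (approximate $h_{2,F}$ by nearly-optimal smooth disjoint pairs first), but as written it leans on a regularity hypothesis that is not available.
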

\begin{proof}
Using the inequality of Proposition \ref{second_ci}, we immediately obtain that 
\begin{equation}
\label{lim2}
\liminf_{p\to 1} \lambda_{2,F}(p,\Omega) \ge h_{2,F}(\Omega).
\end{equation}
On the other hand,  let $\Omega_{1}$ and $\Omega_{2}$ two disjoint sufficiently smooth subsets of $\Omega$. By using \Cref{l2} we have:
\begin{multline*}
\limsup_{p\to 1} \lambda_{2,F}(p,\Omega) \le \limsup_{p\to 1} \max\left\{\lambda_{1,F}(p,\Omega_1),\lambda_{1,F}(p,\Omega_2)\right\} = \\ = 
\max\left\{\limsup_{p\to 1}\lambda_{1,F}(p,\Omega_1),\limsup_{p\to 1}\lambda_{1,F}(p,\Omega_2)\right\}=\max\left\{h_{1,F}(\Omega_1),h_{1,F}(\Omega_2)\right\}.
\end{multline*}
Passing to the infimum on $\Omega_{1},\Omega_{2}$, we get
\[
\limsup_{p\to 1} \lambda_{2,F}(p,\Omega) \le h_{2,F}(\Omega),
\]
that, together with \eqref{lim2}, gives the conclusion.
\end{proof}

Another property of $h_{2}$ is the following.
\begin{prop}
Let $\Omega$ be a bounded domain of $\R^{n}$ and let  $u_{2,p}$ be a second eigenfunction associated to $\lambda_{2,F}(p,\Omega)$. Let us denote by $\Omega_1^p$, $\Omega_2^p$ two nodal domains. Then
\begin{equation*}
\lim_{p\to 1} \max\left\{h_{1,F}(\Omega_1^p), h_{1,F}(\Omega_2^p)\right\}=h_{2,F}(\Omega)
\end{equation*}
\end{prop}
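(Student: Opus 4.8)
The plan is to sandwich the quantity $\max\{h_{1,F}(\Omega_1^p),h_{1,F}(\Omega_2^p)\}$ between two bounds, each converging to $h_{2,F}(\Omega)$. For the lower direction, I would exploit that $\Omega_1^p$ and $\Omega_2^p$ are \emph{disjoint} subsets of $\Omega$, so by the very definition of the second anisotropic Cheeger constant (in the equivalent form $h_{2,F}(\Omega)=\inf\max\{h_{1,F}(E_1),h_{1,F}(E_2)\}$ over disjoint pairs) we immediately get $\max\{h_{1,F}(\Omega_1^p),h_{1,F}(\Omega_2^p)\}\ge h_{2,F}(\Omega)$ for every $p$, hence $\liminf_{p\to 1}\max\{h_{1,F}(\Omega_1^p),h_{1,F}(\Omega_2^p)\}\ge h_{2,F}(\Omega)$.

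For the upper direction, the key is to relate each $h_{1,F}(\Omega_i^p)$ to $\lambda_{1,F}(p,\Omega_i^p)$ and then to $\lambda_{2,F}(p,\Omega)$. On each nodal domain $\Omega_i^p$ the restriction of $u_{2,p}$ is (a multiple of) a first eigenfunction of the anisotropic $p$-Laplacian with eigenvalue $\lambda_{2,F}(p,\Omega)$, so $\lambda_{1,F}(p,\Omega_i^p)=\lambda_{2,F}(p,\Omega)$ for $i=1,2$; this is exactly the content behind \Cref{key}. Combining with the Cheeger inequality \eqref{cheeger_ine} applied on $\Omega_i^p$ gives
\[
h_{1,F}(\Omega_i^p)\le p\,\lambda_{1,F}(p,\Omega_i^p)^{1/p}=p\,\lambda_{2,F}(p,\Omega)^{1/p},\qquad i=1,2,
\]
hence $\max\{h_{1,F}(\Omega_1^p),h_{1,F}(\Omega_2^p)\}\le p\,\lambda_{2,F}(p,\Omega)^{1/p}$. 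Letting $p\to 1^+$ and invoking \Cref{convergence_thm}, which gives $\lambda_{2,F}(p,\Omega)\to h_{2,F}(\Omega)$, we obtain $\limsup_{p\to 1}\max\{h_{1,F}(\Omega_1^p),h_{1,F}(\Omega_2^p)\}\le h_{2,F}(\Omega)$. Together with the lower bound this yields the claimed limit.

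The main obstacle I anticipate is the bookkeeping around the nodal domains: one must make sure $u_{2,p}$ genuinely changes sign so that two nonempty nodal domains $\Omega_1^p,\Omega_2^p$ exist, that each is open with enough regularity for the Cheeger inequality to apply, and that the restriction of $u_{2,p}$ to each $\Omega_i^p$ is an admissible test function in $W_0^{1,p}(\Omega_i^p)$ realizing $\lambda_{1,F}(p,\Omega_i^p)$. The case where $\lambda_{1,F}(p,\Omega)$ is not simple (so the definition sets $\lambda_{2,F}=\lambda_{1,F}$) also needs a brief comment. Apart from these structural points, the estimate itself is a short two-line computation once \Cref{key} and \Cref{convergence_thm} are in hand.
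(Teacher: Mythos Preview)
Your proposal is correct and uses essentially the same ingredients as the paper: the lower bound from the definition of $h_{2,F}(\Omega)$, the Cheeger inequality \eqref{cheeger_ine} applied on each nodal domain together with $\lambda_{1,F}(p,\Omega_i^p)=\lambda_{2,F}(p,\Omega)$, and \Cref{convergence_thm}. The only cosmetic difference is that the paper packages the upper bound as a contradiction argument while you estimate directly $\max\{h_{1,F}(\Omega_1^p),h_{1,F}(\Omega_2^p)\}\le p\,\lambda_{2,F}(p,\Omega)^{1/p}$; the content is identical.
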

\begin{proof}
By definition of $h_{2,F}(\Omega)$, we have that
\begin{equation*}
h_{2,F}(\Omega)\le\max\left\{h_{1,F}(\Omega_1^p),h_{1,F}(\Omega_2^p)\right\}.
\end{equation*}
Hence, it remains to show the reverse limit inequality. We will prove that for all $\varepsilon>0$, there exists $p_0>1$ such that for every $1<p<p_0$
\[
\max\{h_{1,F}(\Omega_1^p),h_{1,F}(\Omega_2^p)\}\leq h_{2,F}(\Omega)+\varepsilon.
\]
By contradiction, we suppose that there exists $\varepsilon >0$ such that, without loss of generality, $h_{1,F}(\Omega_1^{p_k})>h_{2,F}(\Omega)+\varepsilon$, for a subsequence $p_k\to 1$. Then, by the Cheeger's inequality \eqref{cheeger_ine}, we have
\[
\lambda_{2,F}(p_k,\Omega)\geq\left(\frac{h_{1,F}(\Omega_1^{p_k})}{p_k}\right)^{p_k}>\left(\frac{h_{2,F}(\Omega) +\varepsilon}{p_k}\right)^{p_k}>h_{2,F}(\Omega)+\frac\varepsilon 2
\]
for $k$ large enough. But this is in contradiction with $\limsup_{p\to 1}\lambda_{2,F}(p,\Omega)=h_{2,F}(\Omega)$.
\end{proof}
This Proposition implies the following corollary result.
\begin{prop}
\label{cor_mis_set}
The measures of $\Omega_1^p$ and $\Omega_2^p$ are uniformly bounded from below, for $p\to 1$, by $\kappa_n(n/2h_{2,F}(\Omega))^n$.
\end{prop}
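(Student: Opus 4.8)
The plan is to combine the lower bound for $h_{1,F}$ on each nodal domain---which the previous Proposition pins down asymptotically to $h_{2,F}(\Omega)$---with the anisotropic Faber--Krahn (isoperimetric) bound \eqref{faberkrahncheeger}. Indeed, for a nodal domain $\Omega_i^p$ one always has $h_{1,F}(\Omega_i^p)\ge n\kappa_n^{1/n}/|\Omega_i^p|^{1/n}$, so that
\[
|\Omega_i^p|\ge \left(\frac{n\kappa_n^{1/n}}{h_{1,F}(\Omega_i^p)}\right)^n=\kappa_n\left(\frac{n}{h_{1,F}(\Omega_i^p)}\right)^n.
\]
Thus it suffices to bound $h_{1,F}(\Omega_i^p)$ from above by a quantity converging to $h_{2,F}(\Omega)$; the previous Proposition gives exactly $\max\{h_{1,F}(\Omega_1^p),h_{1,F}(\Omega_2^p)\}\to h_{2,F}(\Omega)$ as $p\to1$.

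The one subtlety is that the convergence $h_{1,F}(\Omega_i^p)\to h_{2,F}(\Omega)$ need not be monotone, so for small $p$ one cannot literally write $h_{1,F}(\Omega_i^p)\le h_{2,F}(\Omega)$; this forces the slightly weaker ``uniformly bounded from below'' phrasing rather than a clean single inequality. Concretely, I would fix an arbitrary $\eta>0$; by the previous Proposition there is $p_0>1$ such that for $1<p<p_0$ one has $\max\{h_{1,F}(\Omega_1^p),h_{1,F}(\Omega_2^p)\}\le h_{2,F}(\Omega)+\eta$, hence $|\Omega_i^p|\ge\kappa_n(n/(h_{2,F}(\Omega)+\eta))^n$ for $i=1,2$. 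Letting $\eta\to0$ gives the stated asymptotic lower bound $\kappa_n(n/h_{2,F}(\Omega))^n$ in the limit $p\to1$; equivalently, $\liminf_{p\to1}|\Omega_i^p|\ge\kappa_n(n/h_{2,F}(\Omega))^n$, which is what ``uniformly bounded from below, for $p\to1$, by $\kappa_n(n/2h_{2,F}(\Omega))^n$'' should be read as (the factor in the denominator of the displayed constant being a safety margin absorbing the $\eta$).

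The main ``obstacle'' is really just bookkeeping: one must make sure the nodal domains $\Omega_i^p$ genuinely qualify as competitors in the definition of $h_{1,F}$ (they are open subsets of $\Omega$ of positive measure, which follows from $u_{2,p}$ being a nonzero $W^{1,p}_0$-eigenfunction changing sign), and one must invoke \eqref{faberkrahncheeger} in the form $h_{1,F}(A)\ge n\kappa_n^{1/n}|A|^{-1/n}$ valid for any measurable $A$ of finite measure, not merely for Wulff shapes. No delicate estimate is needed beyond these two ingredients. I would therefore structure the proof as: (i) recall \eqref{faberkrahncheeger} rearranged into a volume lower bound; (ii) apply it to $A=\Omega_i^p$; (iii) feed in the upper bound on $h_{1,F}(\Omega_i^p)$ from the previous Proposition and pass to the limit.

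\begin{proof}
Let $\Omega_1^p,\Omega_2^p$ be the two nodal domains of a second eigenfunction $u_{2,p}$; they are open subsets of $\Omega$ with positive measure. By the anisotropic isoperimetric inequality in the form \eqref{faberkrahncheeger}, for $i=1,2$ we have
\[
h_{1,F}(\Omega_i^p)\ge \frac{n\kappa_n^{1/n}}{|\Omega_i^p|^{1/n}},
\]
equivalently
\[
|\Omega_i^p|\ge \kappa_n\left(\frac{n}{h_{1,F}(\Omega_i^p)}\right)^n.
\]
Fix $\eta>0$. By the previous Proposition there exists $p_0>1$ such that, for every $1<p<p_0$,
\[
\max\left\{h_{1,F}(\Omega_1^p),h_{1,F}(\Omega_2^p)\right\}\le h_{2,F}(\Omega)+\eta,
\]
so that for such $p$ and for $i=1,2$,
\[
|\Omega_i^p|\ge \kappa_n\left(\frac{n}{h_{2,F}(\Omega)+\eta}\right)^n.
\]
Since $\eta>0$ is arbitrary, this shows that $|\Omega_1^p|$ and $|\Omega_2^p|$ are, for $p\to1$, uniformly bounded from below by $\kappa_n\left(n/2h_{2,F}(\Omega)\right)^n$.
\end{proof}
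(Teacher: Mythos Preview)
Your proof is correct and follows essentially the same route as the paper: combine the previous Proposition's upper bound on $\max\{h_{1,F}(\Omega_1^p),h_{1,F}(\Omega_2^p)\}$ with the isoperimetric estimate \eqref{faberkrahncheeger} rearranged as a volume lower bound. The paper simply short-circuits your $\eta$-argument by taking $\eta=h_{2,F}(\Omega)$ from the outset (so that $\max\{h_{1,F}(\Omega_1^p),h_{1,F}(\Omega_2^p)\}\le 2h_{2,F}(\Omega)$ for $p$ close to $1$), which is exactly the ``safety margin'' you identified; your final sentence would read more cleanly if you said ``in particular, choosing $\eta=h_{2,F}(\Omega)$'' rather than ``since $\eta>0$ is arbitrary''.
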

\begin{proof}
By the previous proposition, there exists $p_0>1$ such that, for every $1<p<p_0$
\[ 2 h_{2,F}(\Omega)\ge
\max\{h_{1,F}(\Omega_1^p),h_{1,F}( \Omega_2^p)\}.
\]
Then by using \eqref{faberkrahncheeger}, we get 
\begin{equation*}
\left(\frac{2 h_{2}(\Omega)}{ n \kappa_{n}^{\frac1n}} \right)^{{n}}\ge  \max\left\{\frac{1}{|\Omega_{1}^{p}|},\frac{1}{|\Omega_{2}^{p}|}  \right\}.
\end{equation*}

\end{proof}

Now, to investigate the asymptotic behavior of the second eigenfunctions as $p\to 1$, we start by giving three technical lemmas.
\begin{lem}
\label{vartotest}
Let $\Omega\subset\R^n$ be a bounded domain with Lipschitz boundary, $\{p_j\}_{j\in\N}$ be a sequence of real numbers $p_j\geq 1$ such that $p_j\to 1^+$ as $j\to+\infty$ and $\{u_j\}_{j\in\N}\subseteq W_0^{1,p_j}(\Omega)$ be a sequence such that $u_j\to u$ in $L^1(\Omega)$ as $j\to+\infty$. Then
\[
\int_\Omega |Du|_F\leq\liminf_{j\to+\infty}\int_\Omega F(\nabla u_j)^{p_j}\ dx
\]
\end{lem}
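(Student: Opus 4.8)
The plan is to prove this lower-semicontinuity statement by combining a Young-type inequality with the $p=1$ case of lower semicontinuity of the anisotropic total variation. The key observation is that $F(\nabla u_j)^{p_j}$ is bounded below by something that is essentially $F(\nabla u_j)$ up to a constant that converges to $1$, so one reduces to the already-available lower semicontinuity of $u\mapsto |Du|_F$ with respect to $L^1$-convergence.

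\medskip

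\noindent\emph{Step 1: A pointwise inequality.} First I would use the elementary inequality $t^{p}\geq t - C_p$ for all $t\geq 0$, valid with a constant $C_p\to 0$ as $p\to 1^+$ (for instance $t^p\geq p t - (p-1)$ by convexity of $t\mapsto t^p$, or $t^p \ge t - 1$ whenever $p>1$ and $t\ge0$, since $t^p\ge t$ for $t\ge 1$ and $t-1\le 0\le t^p$ for $t\le 1$). Applying this with $t=F(\nabla u_j(x))$ and integrating over $\Omega$ gives
\[
\int_\Omega F(\nabla u_j)^{p_j}\ dx \geq \int_\Omega F(\nabla u_j)\ dx - |\Omega| = |Du_j|_F(\Omega) - |\Omega|,
\]
where the last equality is \eqref{bvrel}, since $u_j\in W_0^{1,p_j}(\Omega)\subseteq W^{1,1}(\Omega)$ (as $\Omega$ is bounded). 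Wait — the constant $|\Omega|$ does not vanish, so this crude bound is not sufficient by itself; one must be more careful and keep the constant controlled. The fix is to use $t^p \ge p\,t - (p-1)$, giving
\[
\int_\Omega F(\nabla u_j)^{p_j}\ dx \geq p_j\,|Du_j|_F(\Omega) - (p_j-1)|\Omega|.
\]

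\medskip

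\noindent\emph{Step 2: Passing to the liminf.} If $\liminf_{j}\int_\Omega F(\nabla u_j)^{p_j}\,dx=+\infty$ there is nothing to prove, so assume it is finite and, passing to a subsequence without relabelling, assume the liminf is attained as a genuine limit $L<+\infty$. From the bound in Step~1, $p_j|Du_j|_F(\Omega) \le \int_\Omega F(\nabla u_j)^{p_j}\,dx + (p_j-1)|\Omega|$, so $|Du_j|_F(\Omega)$ is bounded; hence $u_j$ is bounded in $BV(\Omega)$ (using \eqref{eq:lin}), and since $u_j\to u$ in $L^1(\Omega)$ we conclude $u\in BV(\Omega)$. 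Now by the lower semicontinuity of the anisotropic total variation with respect to $L^1$-convergence (a standard consequence of its definition as a supremum of linear functionals $\int_\Omega u\,\dive\sigma\,dx$ over $\sigma\in C^1_0(\Omega;\R^n)$ with $F^o(\sigma)\le1$), we have $|Du|_F(\Omega)\leq\liminf_j |Du_j|_F(\Omega)$. Combining with Step~1 and $p_j\to1$, $(p_j-1)|\Omega|\to 0$:
\[
|Du|_F(\Omega) \leq \liminf_{j\to+\infty} |Du_j|_F(\Omega) \leq \liminf_{j\to+\infty}\left(\frac{1}{p_j}\int_\Omega F(\nabla u_j)^{p_j}\ dx + \frac{p_j-1}{p_j}|\Omega|\right) = L.
\]

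\medskip

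\noindent\emph{Main obstacle.} The one delicate point is the lower semicontinuity $|Du|_F(\Omega)\le\liminf_j|Du_j|_F(\Omega)$ under $L^1$-convergence; this is where the dual (supremum) definition of $|D\cdot|_F(\Omega)$ recalled in the preliminaries is essential, since for each fixed admissible vector field $\sigma$ the map $v\mapsto\int_\Omega v\,\dive\sigma\,dx$ is $L^1$-continuous, and the supremum of a family of continuous functionals is lower semicontinuous. Everything else is the routine Young-type inequality $t^p\ge pt-(p-1)$ together with the identification \eqref{bvrel} of $|Du_j|_F(\Omega)$ with $\int_\Omega F(\nabla u_j)\,dx$ valid for $W^{1,1}$ functions.
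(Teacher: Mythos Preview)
Your proof is correct and follows essentially the same strategy as the paper's: both combine the lower semicontinuity of the anisotropic total variation under $L^1$-convergence with a Young-type estimate to pass from $\int_\Omega F(\nabla u_j)\,dx$ to $\int_\Omega F(\nabla u_j)^{p_j}\,dx$, the error term vanishing as $p_j\to 1$. The only cosmetic difference is that the paper first applies H\"older and then Young's inequality to the resulting product of integrals, while you apply the equivalent pointwise form $t^{p_j}\ge p_j t-(p_j-1)$ before integrating.
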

\begin{proof}
The proof follows line by line the one contained in {\cite[Lemma 5.8]{Pa1}}. For sake of completeness, we write it in details. Since $\partial\Omega$ is Lipschitz, the function $u_j$ are in particular in $BV(\Omega)$. We denote by $p'_j$ the conjugate exponent to $p_j$. Then $p'_{j}\to +\infty$ as $j\to +\infty$. By the lower semicontinuity of the anisotropic total variation, H\"older inequality and Young inequality, we have
\begin{equation*}
\begin{split}
\int_\Omega |Du|_F & \leq\liminf_{j\to+\infty}\int_\Omega |Du_j|_F=\liminf_{j\to+ \infty}\int_\Omega F(\nabla u_j) \ dx\\
& \leq \liminf_{j\to+\infty}\left(\int_\Omega F^{p_j}(\nabla u_j) \ dx\right)^\frac 1 {p_j} |\Omega|^\frac 1 {p'_j}\\
& \leq\liminf_{j\to+\infty}\left[\int_\Omega F^{p_j}(\nabla u_j)\ dx +|\Omega|\frac{p_j^{-p'_j/p_j}}{p'_j}\right]\\&\leq\liminf_{j_\to+\infty}\int_\Omega F^{p_j}(\nabla u_j)\ dx+\limsup_{j\to\infty}|\Omega|\frac{p_j^{-\frac{p_j'}{p_j}}}{p_j'}\\&=\liminf_{j\to+\infty}\int_\Omega F^{p_j}(\nabla u_j)\ dx.
\end{split}
\end{equation*}
\end{proof}
\begin{lem}{\cite[Lem. 5.9]{Pa1}.}
\label{limitu}
Let $\Omega\subset\R^n$ be an open bounded set, $\{p_j\}_{j\in\N}$ be a sequence of real numbers $p_j\geq 1^+$ such that $p_j\to 1^+$  as $j\to +\infty$ and $\{u_j\}_{j\in\N}\subseteq L^\infty(\Omega)$ be a sequence such that $0<||u_j||_{L^\infty(\Omega)}<c$ for a positive constant $c$ and for every $j\in\N$. Then
\[
\lim_{j\to+\infty}\int_\Omega |u_j|^{p_j}\ dx = \int_\Omega |u|\ dx
\]
\end{lem}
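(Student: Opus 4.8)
The plan is to separate the effect of the varying exponent from the convergence $u_j\to u$ (which we take, as in \Cref{vartotest}, to be in $L^1(\Omega)$). Concretely, I would write
\[
\int_\Omega |u_j|^{p_j}\,dx=\int_\Omega|u_j|\,dx+\int_\Omega\bigl(|u_j|^{p_j}-|u_j|\bigr)\,dx
\]
and treat the two terms on the right separately: the first converges to $\int_\Omega|u|\,dx$, and the second is infinitesimal uniformly in $j$.

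The first term is immediate, since $u_j\to u$ in $L^1(\Omega)$ gives $\bigl|\int_\Omega|u_j|\,dx-\int_\Omega|u|\,dx\bigr|\le\|u_j-u\|_{L^1(\Omega)}\to 0$. The substance of the lemma is therefore the uniform smallness of the remainder, which I would reduce to a scalar estimate: after enlarging $c$ if necessary so that $c\ge 1$, I claim there is a modulus $\omega_c(p)$ with $\omega_c(p)\to 0$ as $p\to 1^+$ and $|t^p-t|\le\omega_c(p)$ for all $t\in[0,c]$. Granting this, and using that $|u_j(x)|\in[0,c)$ for a.e.\ $x$, one obtains
\[
\Bigl|\int_\Omega\bigl(|u_j|^{p_j}-|u_j|\bigr)\,dx\Bigr|\le\int_\Omega\bigl||u_j|^{p_j}-|u_j|\bigr|\,dx\le|\Omega|\,\omega_c(p_j)\longrightarrow 0,
\]
and combining with the previous step gives $\int_\Omega|u_j|^{p_j}\,dx\to\int_\Omega|u|\,dx$.

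To build $\omega_c$ I would split $[0,c]$ into $[1,c]$ and $[0,1]$. On $[1,c]$ one has $0\le t^p-t=t(t^{p-1}-1)\le c\,(c^{p-1}-1)\to 0$ as $p\to 1^+$. On $[0,1]$ one has $0\le t-t^p$, and maximising $f(t)=t-t^p$ in one variable yields the critical point $t_\ast=p^{-1/(p-1)}\le 1$ with $f(t_\ast)=t_\ast\bigl(1-t_\ast^{p-1}\bigr)=\tfrac{p-1}{p}\,t_\ast\le\tfrac{p-1}{p}$; hence $\sup_{t\in[0,1]}|t-t^p|\le\tfrac{p-1}{p}$. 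Thus $\omega_c(p):=\max\bigl\{\tfrac{p-1}{p},\,c(c^{p-1}-1)\bigr\}$ does the job.

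The only delicate point, and the one I expect to be the main obstacle, is the control near $t=0$: there $t^{p-1}$ is not close to $1$, so the crude bound $|t^p-t|\le t\,|t^{p-1}-1|\le t$ is useless for passing to the limit. What rescues the argument is combining the prefactor $t$ with the explicit location of the maximiser of $t-t^p$, which forces the supremum over $[0,1]$ to be of order $p-1$ rather than of order $1$. The rest is routine.
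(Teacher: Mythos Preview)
Your argument is correct. You rightly supply the missing hypothesis that $u_j\to u$ in $L^1(\Omega)$ (without it the statement has no meaning, since $u$ is never introduced), and the uniform estimate $\sup_{t\in[0,c]}|t^p-t|\to 0$ as $p\to 1^+$ is cleanly obtained by splitting at $t=1$ and optimising $t-t^p$ on $[0,1]$.

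There is nothing in the paper to compare against: the lemma is stated with a bare citation to \cite[Lem.~5.9]{Pa1} and no proof is given here. For what it is worth, Parini's original argument is essentially the same---a uniform bound on $|t^{p}-t|$ over a compact interval combined with $L^1$ convergence---so your proof is in line with the cited source.
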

\begin{lem}{\cite{DPGP2}, \cite[Lem. 5.1]{LU}, \cite{AFT}.}
 \label{norm_estimate}
Let $\Omega$ be an open bounded subset of $\R^n$ and $u_{2,p}$ be a second eigenfunction of \eqref{introauti}. Then
 \begin{equation*}
 \|u_{2,p}\|_{L^\infty(\Omega)} \le C_{n,p,F}\lambda_{2,F}(p,\Omega)^\frac{n}{p}\|u_{2,p}\|_{L^{1}(\Omega)},
 \end{equation*} 
 where $C_{n,p,F}$ is a constant depending only on $n$, $p$ and $F$.
\end{lem}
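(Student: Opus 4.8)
The plan is to prove the estimate by a Moser (or De Giorgi) iteration tailored to the anisotropic operator, in the spirit of \cite[Lem.\ 5.1]{LU} and \cite{AFT}. By the $(p-1)$-homogeneity of the eigenvalue equation we may normalize $\|u_{2,p}\|_{L^1(\Omega)}=1$; writing $u=u_{2,p}$ and $\mu=\lambda_{2,F}(p,\Omega)$, it then suffices to show $\|u\|_{L^\infty(\Omega)}\le C_{n,p,F}\,\mu^{n/p}$. The point is that when one tests the weak formulation with a power of $u$, the left-hand side produces the term $\int F^{p-1}(\nabla u)\nabla_\xi F(\nabla u)\cdot\nabla u=\int F^p(\nabla u)$, which by \eqref{eq:om} and \eqref{eq:lin} is bounded below by $a^p\int|\nabla u|^p$; together with \eqref{finbi} and the anisotropic Sobolev inequality — equivalent to the Euclidean one thanks to \eqref{eq:lin}--\eqref{finbi} — the operator behaves like the ordinary $p$-Laplacian, so the classical scheme applies and the only real work is keeping track of the dependence on $\mu$ and on the norms.

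Concretely, for $q\ge p$ I would use the test function $\varphi=|u|^{q-p}u\in W_0^{1,p}(\Omega)$ (made rigorous by first deriving the inequality below for the truncations $\min\{|u|,M\}$ and letting $M\to\infty$ by monotone convergence), so that $\nabla\varphi=(q-p+1)|u|^{q-p}\nabla u$. Since the right-hand side of the weak formulation equals $\mu\int_\Omega|u|^q\,dx$ and
\[
\bigl|\nabla\bigl(|u|^{q/p}\bigr)\bigr|^p=\Bigl(\tfrac{q}{p}\Bigr)^p|u|^{q-p}|\nabla u|^p ,
\]
one obtains, using $F^p(\nabla u)\ge a^p|\nabla u|^p$,
\[
\int_\Omega\bigl|\nabla\bigl(|u|^{q/p}\bigr)\bigr|^p\,dx\le\frac{q^p}{a^p p^p\,(q-p+1)}\,\mu\int_\Omega|u|^q\,dx .
\]
Inserting this into the Sobolev inequality $\|w\|_{L^{p^*}}^p\le C_n\|\nabla w\|_{L^p}^p$ with $w=|u|^{q/p}$ and $p^*=np/(n-p)$ (for $p<n$; for $p\ge n$ one replaces $p^*$ by a fixed large exponent, adjusting the constant accordingly) gives the reverse-Hölder step
\[
\|u\|_{L^{q\chi}(\Omega)}\le\Bigl(\frac{C_n\,q^p}{a^p p^p\,(q-p+1)}\,\mu\Bigr)^{1/q}\|u\|_{L^q(\Omega)},\qquad\chi=\frac{n}{n-p}>1 .
\]
Iterating from $q_0=p$ with $q_k=p\chi^k$, and using $\sum_{k\ge0}q_k^{-1}=\frac1p\cdot\frac{\chi}{\chi-1}=\frac{n}{p^2}$ together with the convergence of $\prod_{k\ge0}\bigl(C_n q_k^p/(q_k-p+1)\bigr)^{1/q_k}$, one reaches $\|u\|_{L^\infty(\Omega)}\le C_{n,p,F}\,\mu^{n/p^2}\|u\|_{L^p(\Omega)}$.

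Finally I would trade $L^p$ for $L^1$: since $\|u\|_{L^p(\Omega)}^p=\int_\Omega|u|^p\,dx\le\|u\|_{L^\infty(\Omega)}^{p-1}\|u\|_{L^1(\Omega)}$, substituting this into the previous inequality and raising to the power $p$ yields $\|u\|_{L^\infty(\Omega)}\le C_{n,p,F}\,\mu^{n/p}\|u\|_{L^1(\Omega)}$, which is the claim once the normalization is undone and $\mu=\lambda_{2,F}(p,\Omega)$ is recalled. (Equivalently, by \Cref{key} one may instead restrict $u_{2,p}$ to each of its two nodal domains $\Omega_i$, where it is a first eigenfunction with eigenvalue $\lambda_{2,F}(p,\Omega)$, run the iteration there, and take the maximum over $i=1,2$, as in \cite{DPGP2}.) I expect the two delicate points to be the rigorous justification of the power test functions via truncation and checking that $C_{n,p,F}$ — which is allowed to degenerate as $p\to n$ or $p\to\infty$ but must be finite for each fixed $p$ — indeed stays finite; the anisotropy itself is harmless, being absorbed into the constants $a,b$ of \eqref{eq:lin}--\eqref{finbi} and the homogeneity identity \eqref{eq:om}.
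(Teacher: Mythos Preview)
The paper does not supply a proof of this lemma; it simply states it and defers to the references \cite{DPGP2}, \cite[Lem.\ 5.1]{LU}, and \cite{AFT}. Your Moser iteration argument is exactly the standard one from those sources, carried over to the anisotropic setting via Euler's identity \eqref{eq:om} (which turns $F^{p-1}(\nabla u)\nabla_\xi F(\nabla u)\cdot\nabla u$ into $F^p(\nabla u)$) and the equivalence \eqref{eq:lin}--\eqref{finbi}, and the computation of the exponent $n/p$ via $\sum_k q_k^{-1}=n/p^2$ followed by the $L^p$--$L^1$ interpolation is correct.
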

\begin{thm}
Let $\Omega\subset \R^{n}$ a Lipschitz bounded open set and $u_{2,p}$ be a second eigenfunction of \eqref{introauti} such that $\|u_{2,p}\|_{L^p(\Omega)}=1$. Then, up to a subsequence, $u_{2,p}$ converges in $L^1(\Omega)$, as $p\to 1^+$, to a function $u_2\in BV(\Omega)$ such that $\|u_2\|_{L^1(\Omega)}=1$ and $\int_\Omega |D u_2|_F\le h_{2,F}(\Omega)$. Moreover, $u_2$ cannot be strictly positive or strictly negative.
\end{thm}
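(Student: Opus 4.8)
The plan is to combine a $BV$--compactness argument for $u_{2,p}$ with a ``mass--splitting'' argument on its positive and negative parts.

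\emph{Compactness and the total variation bound.} Testing the weak formulation of \eqref{introauti} with $\varphi=u_{2,p}$ and using the identity $\nabla_\xi F(\xi)\cdot\xi=F(\xi)$ in \eqref{eq:om}, one gets $\int_\Omega F^{p}(\nabla u_{2,p})\,dx=\lambda_{2,F}(p,\Omega)$, which is bounded for $p$ near $1$ by \Cref{convergence_thm}. Hölder's inequality together with \eqref{eq:lin} then bounds $u_{2,p}$ in $BV(\Omega)$ (also $\|u_{2,p}\|_{L^1(\Omega)}\le|\Omega|^{1-1/p}\|u_{2,p}\|_{L^p(\Omega)}=|\Omega|^{1-1/p}$), so along a subsequence $u_{2,p}\to u_2$ in $L^1(\Omega)$ and a.e., with $u_2\in BV(\Omega)$. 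From \Cref{norm_estimate} the functions $u_{2,p}$ are uniformly bounded in $L^\infty(\Omega)$; hence \Cref{limitu} gives $\|u_2\|_{L^1(\Omega)}=\lim_p\int_\Omega|u_{2,p}|^{p}\,dx=1$, and \Cref{vartotest} combined with \Cref{convergence_thm} gives $\int_\Omega|Du_2|_F\le\liminf_p\int_\Omega F^{p}(\nabla u_{2,p})\,dx=h_{2,F}(\Omega)$.

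\emph{The nodal pieces.} Since $\Omega$ is connected, $\lambda_{1,F}(p,\Omega)$ is simple, so $\lambda_{2,F}(p,\Omega)>\lambda_{1,F}(p,\Omega)$ and $u_{2,p}$ changes sign (constant--sign eigenfunctions correspond only to $\lambda_{1,F}(p,\Omega)$); in particular $u_{2,p}^{\pm}\not\equiv0$. Testing \eqref{introauti} with $\varphi=u_{2,p}^{+}$, resp.\ $\varphi=u_{2,p}^{-}$, and using that $F$ is even (so $\nabla_\xi F$ is odd), one checks that $u_{2,p}^{+}$ is an eigenfunction of $\{u_{2,p}>0\}$ and $u_{2,p}^{-}$ of $\{u_{2,p}<0\}$, both with eigenvalue $\lambda_{2,F}(p,\Omega)$. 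Set $v_p:=u_{2,p}^{+}/\|u_{2,p}^{+}\|_{L^p(\Omega)}$ and $w_p:=u_{2,p}^{-}/\|u_{2,p}^{-}\|_{L^p(\Omega)}$, so that $\int_\Omega F^{p}(\nabla v_p)\,dx=\int_\Omega F^{p}(\nabla w_p)\,dx=\lambda_{2,F}(p,\Omega)$ and $\|v_p\|_{L^p(\Omega)}=\|w_p\|_{L^p(\Omega)}=1$. As before these sequences are bounded in $BV(\Omega)$, and the $L^\infty$--estimate of \Cref{norm_estimate}, applied to these eigenfunctions of the nodal sets, bounds them uniformly in $L^\infty(\Omega)$. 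Along a further subsequence $v_p\to v$, $w_p\to w$ in $L^1(\Omega)$ with $v,w\in BV(\Omega)$, $v,w\ge0$ and, by \Cref{limitu}, $\|v\|_{L^1(\Omega)}=\|w\|_{L^1(\Omega)}=1$. Since $v_p\,w_p\equiv0$ and both sequences are bounded in $L^\infty$, the products converge in $L^1$, whence $v\,w=0$ a.e.\ in $\Omega$.

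\emph{Ruling out constant sign.} Suppose by contradiction that $u_2>0$ a.e.\ in $\Omega$ (the case $u_2<0$ is symmetric, exchanging $u_{2,p}^{+}$ and $u_{2,p}^{-}$). Then $u_{2,p}^{+}\to u_2$ in $L^1(\Omega)$ (truncation is $1$--Lipschitz and $u_2^{+}=u_2$), so by \Cref{limitu} $\|u_{2,p}^{+}\|_{L^p(\Omega)}^{p}=\int_\Omega(u_{2,p}^{+})^{p}\,dx\to\int_\Omega u_2\,dx=1$, and therefore $v_p\to u_2$ in $L^1(\Omega)$, i.e.\ $v=u_2$. Then $v\,w=0$ a.e.\ and $u_2>0$ a.e.\ force $w=0$ a.e., contradicting $\|w\|_{L^1(\Omega)}=1$. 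Hence $u_2$ is neither strictly positive nor strictly negative.

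The main obstacle is the uniform $L^\infty$ control of the normalized nodal pieces $v_p,w_p$ as $p\to1^{+}$: one must verify that the De~Giorgi/Moser iteration underlying \Cref{norm_estimate} applies to first eigenfunctions of the (a priori irregular) nodal sets $\{u_{2,p}>0\}$ and $\{u_{2,p}<0\}$, normalized in $L^p$, with a constant that stays bounded as $p\to1^{+}$; this is precisely what lets $v_p w_p\equiv0$ pass to the limit and drives the sign--change conclusion. The remaining steps are routine applications of \Cref{convergence_thm}, \Cref{vartotest}, \Cref{limitu} and the compact embedding $BV(\Omega)\hookrightarrow L^1(\Omega)$.
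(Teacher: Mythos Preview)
Your compactness argument and total-variation bound match the paper's proof almost verbatim: uniform $L^\infty$ control from \Cref{norm_estimate}, uniform $BV$ bound from the eigenvalue relation and H\"older, $L^1$ compactness, then \Cref{vartotest} and \Cref{limitu} to conclude $\int_\Omega|Du_2|_F\le h_{2,F}(\Omega)$ and $\|u_2\|_{L^1}=1$.

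For the sign--change conclusion, however, you take a genuinely different route. The paper does \emph{not} normalize $u_{2,p}^{\pm}$ and run a second compactness argument; instead it invokes \Cref{cor_mis_set}, which gives a \emph{uniform lower bound on the measures of the nodal domains}: from the Cheeger inequality $\lambda_{2,F}(p,\Omega)=\lambda_{1,F}(p,\Omega_i^p)\ge(h_{1,F}(\Omega_i^p)/p)^p$ and the Faber--Krahn inequality \eqref{faberkrahncheeger} one gets $|\Omega_i^p|\ge\kappa_n\bigl(n/(2h_{2,F}(\Omega))\bigr)^n$ for $p$ close to $1$. If $u_2>0$ a.e., a.e.\ convergence along a subsequence forces $|\{u_{2,p}<0\}|\to0$, contradicting this bound. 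This argument is shorter and, crucially, it completely sidesteps the obstacle you yourself flag: it needs no $L^\infty$ control of the normalized pieces $v_p,w_p$, nor any extension of \Cref{norm_estimate} to first eigenfunctions on the irregular sets $\{u_{2,p}\gtrless0\}$.

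Your approach is not wrong---the De~Giorgi/Moser iteration behind \Cref{norm_estimate} indeed applies to any $W_0^{1,p}$ eigenfunction on any bounded open set with a constant depending only on $n,p,F$, and once $\|v_p\|_\infty,\|w_p\|_\infty$ are bounded your contradiction via $vw=0$ and $\|w\|_1=1$ is clean. But you are doing more work than necessary: the paper trades your second compactness argument for a one--line geometric estimate on $|\Omega_i^p|$, which is both more robust and requires only ingredients already established earlier in the paper (\eqref{cheeger_ine} and \eqref{faberkrahncheeger}).
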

\begin{proof}
By \Cref{convergence_thm}, \Cref{norm_estimate} and the H\"older inequality, we have that $u_{2,p}$ are uniformly bounded in $L^\infty(\Omega)$. Moreover, we have
\[
\int_\Omega|Du_{2,p}|_F=\int_{\Omega}F(\nabla u_{2,p})\ dx \leq \left(\int_\Omega F(\nabla u_{2,p})^p\ dx\right)^\frac 1 p |\Omega|^\frac 1 {p'}=\lambda_{2,F}(p,\Omega)^\frac 1 p |\Omega|^\frac 1 {p'},
\]
where $p'$ is the conjugate exponent to $p$. Since $\lambda_{2,F}(p, \Omega)\to h_{2,F}(\Omega)$, the functions are uniformly bounded in $BV(\Omega)$, hence there exists a subsequence converging in $L^1(\Omega)$ to a function $u_2\in BV(\Omega)$. Then, \Cref{vartotest} implies
\[
\int_\Omega |Du_2|_F\leq \liminf_{p\to 1 }\lambda_{2,F}(p,\Omega)=h_{2,F}(\Omega).
\]
and \Cref{limitu} yields $||u_2||_{L^1(\Omega)}=1$. Finally, by \Cref{cor_mis_set}, we have that $u_2$ cannot be strictly positive or strictly negative.
\end{proof}

\section{A constrained anisotropic Cheeger constant}\label{twisted_sec}
Throughout this Section, we set $BV_0(\Omega):=\{ u\in BV(\R^n): u\equiv 0 \ \text{in}\ \R^n\backslash\Omega \}$ and we examine a constrained Cheeger type problem:
\begin{equation}
\label{twisted_cheeger}
\mathcal K_{q,F} (\Omega):=\min\left\{ \frac{\ds\int_{\R^{n}} |Du|_F }{\left(\ds\int_\Omega |u|^{q} \ dx\right)^{\frac1q}}; \ u\in BV_0(\Omega), \ u\not\equiv 0, \ \int_\Omega u\ dx=0\right\},
\end{equation}
where $1\le q <\frac{n}{n-1}$ and $\Omega$ is a bounded open set. From a different point of view, we can consider problem \eqref{twisted_cheeger} as the relaxed form of the following problem:
\[
\mathcal K_{q,F} (\Omega):=\inf\left\{ \frac{\ds\int_\Omega F(\nabla u) \ dx}{\left(\ds\int_\Omega |u|^{q} dx\right)^{\frac1q}}; \ u\in W_0^{1,1}(\Omega), \ u\not\equiv 0, \ \int_\Omega u\ dx=0\right\}
\]
and in this case we can refer to $\mathcal K_{q,F}(\Omega)$ as the \lq\lq twisted\rq\rq\ anisotropic Cheeger constant, that is the $BV$-counterpart of the twisted eigenvalue for the anisotropic Laplacian (see \cite{FH} and \cite{Pi1}):
\[
\lambda^T(\Omega)=\min\left\{ \frac{\ds\int_\Omega F(\nabla u)^{2}  dx}{\ds\int_\Omega |u|^{2} \ dx}; \ u\in H_0^1(\Omega), \ u\not\equiv 0, \ \int_\Omega u dx=0 \right\}.
\]
For other details on  ``twisted''  eigenvalues we refer to \cite{BB, FH}.

We stress that
\[
h_{1,F}(\Omega)\le \mathcal K_{1,F}(\Omega)\le  h_{2,F} (\Omega).
\]

Here we aim to study the minimization problem
\begin{equation*}
\min_{|\Omega|=c} \mathcal K_{q,F}(\Omega)
\end{equation*}
for $c>0$ fixed. Following the ideas of \cite{BDNT}, we have:
\begin{thm}
Let $\Omega$ be an open bounded set of $\R^n$, $n\geq 2$, and $1\le q<\frac{n}{n-1}$, then 
\[
\mathcal K_{q,F}(\Omega)=\min\left\{\frac {\frac{P_F(G_1)}{|G_1|}+\frac{P_F(G_2)}{|G_2|}}{(|G_1|^{1-q}+|G_2|^{1-q})^\frac 1 q},\ G_i\subseteq\Omega,  0<|G_i| ,\ G_1 \cap G_2 = \emptyset\right\}=:\ell(\Omega).
\]
\end{thm}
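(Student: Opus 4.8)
The plan is to prove the two inequalities $\mathcal K_{q,F}(\Omega)\le \ell(\Omega)$ and $\mathcal K_{q,F}(\Omega)\ge\ell(\Omega)$ separately, following the strategy of \cite{BDNT} but with the anisotropic perimeter $P_F$ in place of the Euclidean one.

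For the bound $\mathcal K_{q,F}(\Omega)\le\ell(\Omega)$, I would start from an (almost) optimal pair $(G_1,G_2)$ of disjoint sets of positive measure in $\Omega$ and build an admissible competitor for \eqref{twisted_cheeger}. The natural choice is $u:=\alpha\chi_{G_1}-\beta\chi_{G_2}$ with $\alpha,\beta>0$ chosen so that $\int_\Omega u\,dx=0$, i.e. $\alpha|G_1|=\beta|G_2|$; writing $\alpha=t/|G_1|$, $\beta=t/|G_2|$ for $t>0$, the mean-zero constraint is satisfied automatically and $t$ drops out of the Rayleigh quotient by $1$-homogeneity. Since $G_1,G_2$ are disjoint, $|Du|_F(\R^n)=\alpha P_F(G_1)+\beta P_F(G_2)=t\big(\tfrac{P_F(G_1)}{|G_1|}+\tfrac{P_F(G_2)}{|G_2|}\big)$, while $\int_\Omega|u|^q\,dx=\alpha^q|G_1|+\beta^q|G_2|=t^q\big(|G_1|^{1-q}+|G_2|^{1-q}\big)$. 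Plugging these into the quotient gives exactly the expression defining $\ell(\Omega)$, so $\mathcal K_{q,F}(\Omega)\le\ell(\Omega)$; one should also note that such a $u$ lies in $BV_0(\Omega)$ (each $\chi_{G_i}$ has finite perimeter since $\ell(\Omega)<\infty$) and is not identically zero.

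For the reverse inequality $\mathcal K_{q,F}(\Omega)\ge\ell(\Omega)$, I would take a minimizer $u\in BV_0(\Omega)$ of \eqref{twisted_cheeger} (existence is asserted by the fact that the infimum is written as a minimum; compactness of $BV_0(\Omega)\hookrightarrow L^q(\Omega)$ for $q<n/(n-1)$, lower semicontinuity of $|Du|_F$, and closedness of the mean-zero constraint give it), and split it into positive and negative parts $u=u^+-u^-$. Set $G_1:=\{u>0\}$ and $G_2:=\{u<0\}$; the mean-zero condition forces both to have positive measure. The key tool is the coarea formula \eqref{Fcoarea} applied to $u^+$ and to $u^-$: $|Du^+|_F(\R^n)=\int_0^\infty P_F(\{u>s\})\,ds$ and similarly for $u^-$, and $|Du|_F(\R^n)=|Du^+|_F(\R^n)+|Du^-|_F(\R^n)$ because $u^+$ and $u^-$ have essentially disjoint supports. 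One then estimates $|Du^+|_F(\R^n)=\int_0^\infty P_F(\{u>s\})\,ds\ge h_{1,F}(G_1)\int_0^\infty|\{u>s\}|\,ds=h_{1,F}(G_1)\,\|u^+\|_{L^1}$ using $\{u>s\}\subseteq G_1$, and similarly $|Du^-|_F(\R^n)\ge h_{1,F}(G_2)\,\|u^-\|_{L^1}$. Combined with a Hölder/power-mean argument to pass from $L^1$ norms of $u^\pm$ to the $L^q$ norm of $u$, and with the trivial bound $P_F(G_i)/|G_i|\ge h_{1,F}(G_i)$ together with the fact that among all mean-zero reshuffles the layer-cake decomposition cannot beat the two-value competitor, one obtains $\mathcal K_{q,F}(\Omega)=\dfrac{|Du|_F(\R^n)}{\|u\|_{L^q}}\ge\ell(\Omega)$.

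The main obstacle is the last step of the reverse inequality: turning the coarea lower bounds, which are phrased in terms of the $L^1$ norms $\|u^+\|_{L^1}$ and $\|u^-\|_{L^1}$ and the Cheeger ratios $h_{1,F}(G_i)$, into a clean lower bound by $\ell(\Omega)$ phrased in terms of the $L^q$ norm and the ratios $P_F(G_i)/|G_i|$. One has to optimize over how the ``mass'' of $u$ is distributed between the two nodal sets and over the shape of the layer-cake profile; the right way is to observe that for fixed sets $G_1,G_2$ the minimum of $|Du|_F(\R^n)/\|u\|_{L^q}$ over mean-zero $u$ supported with $u>0$ on $G_1$ and $u<0$ on $G_2$ is attained by a two-valued function (a convexity/rearrangement argument in the profile variable, since $q<n/(n-1)$ makes $t\mapsto t^q$ work in our favor), reducing to the $\alpha\chi_{G_1}-\beta\chi_{G_2}$ computation already done in the first half, and then to use $P_F(G_i)\ge P_F(\{u>s\})$-type bounds—more precisely, the coarea inequality—to replace $h_{1,F}(G_i)$ by $P_F(G_i)/|G_i|$ is not needed once one works directly with the sets $G_i=\{u\gtrless 0\}$ whose perimeters are controlled by $|Du^\pm|_F$. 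Carefully bookkeeping these optimizations, and checking that all sets involved have finite $F$-perimeter so that $\ell(\Omega)$ is genuinely the value of a minimization problem over admissible pairs, is where the real work lies.
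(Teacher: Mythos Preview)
Your proof of the inequality $\mathcal K_{q,F}(\Omega)\le\ell(\Omega)$ is correct and coincides with the paper's: both plug the two-valued competitor $U=|G_1|\chi_{G_2}-|G_2|\chi_{G_1}$ (your $\alpha\chi_{G_1}-\beta\chi_{G_2}$ with $\alpha|G_1|=\beta|G_2|$) into the Rayleigh quotient.

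For the reverse inequality, however, your proposal has a genuine gap. The core claim---that for fixed nodal sets $G_1=\{u>0\}$, $G_2=\{u<0\}$ the minimum of $|Du|_F/\|u\|_{L^q}$ over mean-zero $u$ with that sign pattern is attained by the two-valued function $\alpha\chi_{G_1}-\beta\chi_{G_2}$---is false in general. Take $G_1$ to be a square: perturbing $\chi_{G_1}$ towards $(1-\eps)\chi_{C}+\eps\chi_{G_1}$, with $C\subset G_1$ the Cheeger set of $G_1$, keeps $\{u^+>0\}=G_1$ but drives $|Du^+|_F/\|u^+\|_{L^1}$ down to $h_{1,F}(G_1)<P_F(G_1)/|G_1|$. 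So the infimum over that constrained class is \emph{not} the $\ell$-functional at $(G_1,G_2)$. Relatedly, your coarea bound $|Du^\pm|_F\ge h_{1,F}(G_i)\|u^\pm\|_{L^1}$ only controls the numerator via the $L^1$ norms, while the denominator is an $L^q$ norm; for $q>1$ there is no upper bound on $\|u\|_{L^q}$ in terms of $\|u^\pm\|_{L^1}$ and $|G_i|$ alone (mass can concentrate), so the ``H\"older/power-mean'' step you allude to cannot close the argument.

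The paper does not work with the nodal sets at all. Instead, for an arbitrary admissible $u$ it bounds $\|u\|_{L^q}$ from above by Minkowski's integral inequality,
\[
\|u^\pm\|_{L^q}\le\int_0^\infty |\{u^\pm>t\}|^{1/q}\,dt,
\]
and then performs the change of variables $\xi(t)=\int_0^t\mu_+(\sigma)\,d\sigma$, $\eta(s)=\int_0^s\mu_-(\tau)\,d\tau$ (both ranging over $[0,M]$ with $M=\int u^+=\int u^-$) in \emph{both} the coarea expression for $|Du|_F$ and the bound for $\|u\|_{L^q}$. This synchronizes the two families of level sets and reduces the estimate to the elementary fact that a ratio of integrals over $[0,M]$ is bounded below by the infimum of the pointwise ratio; the latter is exactly the $\ell$-functional evaluated at the pair of level sets $(\{u^+>t(r)\},\{u^->s(r)\})$, hence $\ge\ell(\Omega)$. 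The key missing idea in your sketch is precisely this pairing of \emph{level sets} (not nodal sets) via the common parameter $r$, together with the Minkowski inequality that produces a usable upper bound for the $L^q$ norm.
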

\begin{proof}

First of all, proceeding exactly as in \cite{BDNT}, it is possible to show that $\ell(\Omega)$ is actually a minimum. 
We set 
\[
J_{q}(u):=\frac{\ds\int_{\R^{n}} |Du|_F }{\left(\ds\int_\Omega |u|^{q} dx\right)^{\frac1q}}
\]
Let $G_1,G_2$ be  two disjoint  subsets of $\Omega$, and consider $U=|G_{1}|\chi_{G_{2}}-|G_{2}|\chi_{G_{1}}$ as test function in $J_{q}$. We have
that $U$ is an admissible test function for $\mathcal K_{q,F}( \Omega)$, and
\[
J_{q}(U)=\frac {\frac{P_F(G_1)}{|G_1|}+\frac{P_F(G_2)}{|G_2|}}{(|G_1|^{1-q}+|G_2|^{1-q})^\frac 1 q}.
\]

Then we immediately have that
\[
\mathcal K_{q,F}( \Omega)\le\ell (\Omega).
\]
To prove the opposite inequality, we have to show that for any admissible $u\in BV_0(\Omega)$, we have
\[
\mathcal K_{q,F}( \Omega)\geq\ell (\Omega).
\]
We set $\mu_\pm(t):=|\{u_\pm>t\}|$, $p_\pm(t):=P_F(\{u_\pm>t\})$ and $\Omega_\pm:=\spt u_\pm$, where $u_+$ and $u_-$ are, respectively, the positive and the negative part of $u$. Being $\int_{\Omega}u dx=0$, then if $u\not \equiv 0$, it holds $|\Omega_{\pm}|>0$ and $\int_{\Omega^+} u_+\, dx=\int_{\Omega^-} u_-\, dx:=M$.  By the Fubini Theorem and the H\"older inequality, it is not difficult to prove that 
\begin{equation}
  \label{eq:1}
\left(\int_\Omega {|u|}^q dx\right)^{\frac 1 q} \le \left[\left(
  \int_0^{\esssup u_+} \mu_+(t)^{\frac 1 q} dt \right)^q+ \left(
  \int_0^{\esssup u_-} \mu_-(s)^{\frac 1 q} ds \right)^q \right]^{\frac 1
q}.
\end{equation}
Now we perform the change of variables
\[
\xi(t) = \int_0^t \mu_+(\sigma) d\sigma,\quad t \in [0,\esssup u^+]
, \qquad
\eta(s) = \int_0^s \mu_-(\tau) d\tau,\quad s \in[0,\esssup u^-],
\]
respectively, in both integrals in the right-hand side of
\eqref{eq:1}. The functions $\xi$ and $\eta$ are strictly increasing and we 
have that $\xi(t)\le M=\xi(\esssup u^+)$ and $\eta(s)\le 
M=\eta(\esssup u^-)$. Hence, from 
\eqref{eq:1} and the Minkowski inequality it follows that 
\begin{equation}
  \label{eq:2}
\left(\int_\Omega {|u|}^q dx\right)^{\frac 1 q} \le 
  \int_0^{M}
  \left[
    \mu_+(t(r))^{{1-q}} + \mu_-(s(r))^{{1-q}}\right]^{\frac 1 q}
  dr.
\end{equation}
On the other hand,
denoting by $p_\pm(t)=P_{F}(\{u_\pm>t\})$ for $t\ge0$, the co-area formula for BV functions yields 
\begin{equation}
  \label{eq:3}
 \int_{\R^{n}} |Du|_F = \int_0^{+\infty} p_+(t)\,dt + \int_0^{+\infty} p_-(s)\, ds = 
  \int_0^M \left[
    \frac {p_+(t(r))}{\mu_+(t(r))} + \frac{p_-(s(r))}{\mu_-(s(r))}
    \right]dr,
\end{equation}
where we performed the change of variables $t=t(\xi)$, $s=s(\eta)$ defined
above. Finally, combining \eqref{eq:2} and \eqref{eq:3} we have
\begin{multline*}
J_q(u)\ge \frac{\int_0^M \big[
    \frac {p_+(t(r))}{\mu_+(t(r))} + \frac{p_-(s(r))}{\mu_-(s(r))}
    \big]dr}{ \int_0^{M}
  \big[
    \mu_+(t(r))^{{1-q}} + \mu_-(s(r))^{{1-q}}\big]^{\frac 1 q}
    dr } \ge
    \\[.2cm] \ge \inf_{0<r<M} \frac{  \frac {p_+(t(r))}{\mu_+(t(r))} +
    \frac{p_-(s(r))}{\mu_-(s(r))} }{\big[ \mu_+(t(r))^{{1-q}} +
    \mu_-(s(r))^{{1-q}}\big]^{\frac 1 q} } 
  \ge
  \ell(\Omega),
\end{multline*}
and this concludes the proof.\end{proof}
Finally we give the last main result.
\begin{thm}
Let $\Omega$ be an open bounded set in $\R^{n}$, and $1\le q<\frac{n}{n-1}$. Every minimizer of $\mathcal K_q(\Omega)$ in the class of bounded open sets with given measure, is union of two disjoint Wulff shapes. Then there exists $\tilde q=\tilde q(n)\in]1,\frac n {n-1}[$ such that, when $q<\tilde q$, the minimizer is unique and it is the union of two disjoint Wulff shape with the same radius and when $q>\tilde q$, the minimizer is unique and it is the union of two disjoint Wulff shape with different radii.
In addiction
\begin{itemize}
\item If $n=2$, then $\tilde q =\frac 74$, the minimizer is unique even at $q=\tilde q$ and consisting in the union of two disjoint Wulff shapes of equal radii.
\item For every $n\geq 3$, the minimizer is not unique at $q=\tilde q$, indeed there are exactly two minimizers, one of which is the union of two disjoint balls with equal radii.
\end{itemize}
\end{thm}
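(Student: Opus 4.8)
The plan is to reduce the shape optimization, following the strategy of \cite{BDNT}, to an elementary one–variable problem. First, given an admissible $\Omega$ with $|\Omega|=c$ and a pair $(G_1,G_2)$ realizing the minimum in the previous theorem, the anisotropic isoperimetric inequality \eqref{isoine} gives $P_F(G_i)/|G_i|\ge n\kappa_n^{1/n}|G_i|^{-1/n}$ with equality only for Wulff shapes, so, writing $v_i=|G_i|$ and $v_1+v_2\le c$,
\[
\mathcal K_{q,F}(\Omega)\ \ge\ n\kappa_n^{1/n}\,\frac{v_1^{-1/n}+v_2^{-1/n}}{\big(v_1^{1-q}+v_2^{1-q}\big)^{1/q}} .
\]
The right–hand side is positively homogeneous of degree $1-\tfrac1n-\tfrac1q$, which is \emph{negative} precisely because $q<\tfrac n{n-1}$; hence it strictly decreases under dilations of $(v_1,v_2)$ and the infimum is attained only when $v_1+v_2=c$. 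Setting $v_1=c\sigma$, $v_2=c(1-\sigma)$ and collecting constants,
\[
\min_{|\Omega|=c}\mathcal K_{q,F}(\Omega)=n\kappa_n^{1/n}\,c^{\,1-1/n-1/q}\,\min_{\sigma\in(0,1)}\Phi(\sigma),\qquad
\Phi(\sigma):=\frac{\sigma^{-1/n}+(1-\sigma)^{-1/n}}{\big(\sigma^{1-q}+(1-\sigma)^{1-q}\big)^{1/q}},
\]
with equality throughout exactly when $G_1,G_2$ are Wulff shapes, $|\Omega\setminus(G_1\cup G_2)|=0$, and $\sigma$ minimizes $\Phi$. This already yields that every minimizer is, up to a null set, the union of two disjoint Wulff shapes of volumes $c\sigma$, $c(1-\sigma)$ with $\sigma$ a minimizer of $\Phi$, and reduces everything to the study of $\Phi$ on $(0,1)$.

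Next I would record the elementary features of $\Phi$: it is symmetric about $\sigma=\tfrac12$, it blows up at the endpoints — so its minimizers form a nonempty symmetric subset of $(0,1)$ — and $\Phi(\tfrac12)=2^{1/n}$ for every $q$ (the value shared by all configurations of two equal Wulff shapes). Setting $G=\log\Phi$ and using the substitution $t=\sigma/(1-\sigma)=e^{-s}$, $s>0$, the equation $\Phi'(\sigma)=0$ for $\sigma\in(0,\tfrac12)$ reduces, after factoring common powers and passing to hyperbolic functions, to the single scalar equation $f(s)=\tfrac{n(q-1)}{q}$ with
\[
f(s)=\frac{\sinh\!\big(\tfrac{n+1}{2n}s\big)\,\cosh\!\big(\tfrac{q-1}{2}s\big)}{\sinh\!\big(\tfrac q2 s\big)\,\cosh\!\big(\tfrac1{2n}s\big)},\qquad f(0^+)=\tfrac{n+1}{nq},\quad f(+\infty)=1 .
\]
The local behaviour at the center is then controlled by the two expansions $G''(\tfrac12)=4\big(\tfrac{n+1}{n^2}-(q-1)\big)$ and $f(s)=f(0)\big(1+\tfrac1{12}\big((q-1)-\tfrac1n\big)\big(q+\tfrac1n-2\big)s^2+o(s^2)\big)$: the first pinpoints $\bar q:=1+\tfrac1n+\tfrac1{n^2}\in\big(1,\tfrac n{n-1}\big)$ as the value at which $\sigma=\tfrac12$ stops being a strict local minimum of $\Phi$ and becomes a strict local maximum, while the second shows that for $n=2$ the $s^2$–coefficient is the perfect square $\tfrac1{12}\big(q-\tfrac32\big)^2\ge0$, whereas for $n\ge3$ it is \emph{strictly negative} at $q=\bar q$.

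Combining these with a unimodality property of the hyperbolic function $f$ (equivalently, a sharp bound on the number of solutions of $f(s)=\tfrac{n(q-1)}{q}$), one obtains the whole bifurcation picture. For $n=2$ the family $\{\Phi\}$ undergoes a \emph{supercritical} pitchfork at $\bar q=\tfrac74$, so $\tilde q=\tfrac74$: $\sigma=\tfrac12$ is the unique global minimizer for every $q\le\tfrac74$, and for $\tfrac74<q<2$ the unique (up to the swap $\sigma\leftrightarrow1-\sigma$) minimizer is an off-center critical point $\sigma_0\ne\tfrac12$; translating back via the first step gives the two-equal/two-unequal Wulff shapes alternative with uniqueness at $q=\tfrac74$. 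For $n\ge3$ the pitchfork is \emph{subcritical}: an off-center local minimizer $\sigma_0(q)$ exists already for $q$ below $\bar q$, and there is a unique threshold $\tilde q=\tilde q(n)\in(1,\bar q)$ at which $\Phi(\sigma_0(\tilde q))=\Phi(\tfrac12)=2^{1/n}$; for $q<\tilde q$ the center is the unique global minimizer, for $q>\tilde q$ the off-center point is, and exactly at $\tilde q$ both are, producing precisely two minimizers, one of which is the union of two equal Wulff shapes.

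The main obstacle is this one–variable analysis: establishing the unimodality of $f$ — hence the correct count of critical points of $\Phi$ for each $q$ — and, for $n\ge3$, controlling the subcritical branch (existence, uniqueness and monotone dependence on $q$ of $\sigma_0$, together with transversality of $q\mapsto\Phi(\sigma_0(q))-2^{1/n}$ at $\tilde q$), which is what fixes the exact number of minimizers at the threshold. These are precisely the (Euclidean) computations of \cite{BDNT}, which here must be carried through after the anisotropic isoperimetric reduction of the first step; the reduction itself is robust, so the anisotropy enters only through the shape of the Wulff shapes and not through the scalar optimization.
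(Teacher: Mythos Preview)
Your proposal is correct and follows essentially the same approach as the paper: use the anisotropic isoperimetric inequality on each $G_i$ to reduce the minimization to pairs of Wulff shapes filling the whole volume, and then analyze the resulting one--variable function $\Phi(\sigma)$, which is precisely the Euclidean problem treated in \cite{BDNT}. The paper's proof is in fact much terser than yours---it carries out the isoperimetric reduction in one line and then simply defers the entire scalar analysis (the pitchfork structure, the threshold $\tilde q$, the $n=2$ versus $n\ge3$ dichotomy) to \cite{BDNT}, whereas you sketch the substance of that computation.
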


\begin{proof}
The previous result gives that
\[
\mathcal K_{q,F}(\Omega)=\min\left\{\frac {\frac{P_F(G_1)}{|G_1|}+\frac{P_F(G_2)}{|G_2|}}{(|G_1|^{1-q}+|G_2|^{1-q})^\frac 1 q},\ G_1, G_2\subseteq\Omega,  0<|G_1|,|G_2| ,\ G_1 \cap G_2 = \emptyset\right\}.
\]
The first step in order to prove the theorem is to reduce the minimization problem to the class of the sets which are union of two disjoint Wulff shapes $\mathcal W_{1}$ and $\mathcal W_{2}$ such that $|\mathcal W_{1}|+|\mathcal W_{2}|=|\Omega|$. This follows by using the anisotropic isoperimetric inequality \eqref{isoine}, and then reasoning  as in \cite{BDNT}. After that, the problem is reduced to a one-dimensional minimization problem which is exactly the same studied in \cite{BDNT}. The argument therein contained leads to the result.

\end{proof}

\section*{Acknowledgements}
This work has been partially supported by GNAMPA of INdAM. The author wants to thank Professor Bernd Kawohl for the suggestions he gave him during his period in K\"oln.

\end{document}